\DeclareMathAlphabet{\mathpzc}{OT1}{pzc}{m}{it} % Tipo de letra \mathpzc
\newtheorem{Th}{Theorem}[section]              % Enumera los teoremas de acuerdo con la seccion (Theorem 1.1, Theorem 1.2 , ...)
\newtheorem{Cor}{Corollary}[section]
\newtheorem{Rem}{Remark}[section]
\newtheorem{Prop}[Th]{Proposition}
\newtheorem{Lem}{Lemma}[section]
\newcommand{\C}{\mathbb{C}}
\newcommand{\N}{\mathbb{N}}
\newcommand{\R}{{\mathbb{R}}}
\DeclareMathOperator{\supp}{supp}
\title[Variable exponent Sobolev spaces associated with Jacobi expansions]
      {Variable exponent Sobolev spaces associated with Jacobi expansions}
\author[V. Almeida]{V. Almeida}
\author[J.J. Betancor]{J.J. Betancor}
\author[A.J. Castro]{A.J. Castro}
\author[A. Sanabria]{A. Sanabria}
\author[R. Scotto]{R. Scotto}
\address{\newline
        V\'{\i}ctor Almeida, Jorge J. Betancor, Alejandro Sanabria \newline
        Departamento de An\'alisis Matem\'atico,
        Universidad de la Laguna, \newline
        Campus de Anchieta, Avda. Astrof\'{\i}sico Francisco S\'anchez, s/n, \newline
        38271, La Laguna (Sta. Cruz de Tenerife), Spain}
\email{valmeida@ull.es, jbetanco@ull.es, asgarcia@ull.es}
\address{\newline
        Alejandro J. Castro\newline
        Department of  Mathematics,
       Uppsala University, \newline
       S-751 06 Uppsala, Sweden}
\email{alejandro.castro@math.uu.se}
\address{\newline
       Roberto Scotto \newline
       IMAL- Facultad de Ingenier\'{\i}a Qu\'{\i}mica, U.N. del Litoral,\newline
        Santiago del Estero 2829, Santa Fe 3000, Argentina}
\email{roberto.scotto@gmail.com}
\keywords{Variable exponent Sobolev spaces, Variable exponent
potential spaces, Jacobi expansions, spectral multipliers}
\subjclass[2010]{primary 42C10; secondary 42C05, 42C20.}
\begin{document}

  \footnotetext{Date: \today.}

  \maketitle                % Si no se activa esta opcion no se pone ni el titulo ni los autores en el encabezado de cada pagina

\begin{abstract}
    In this paper we define variable exponent Sobolev spaces associated with Jacobi expansions.
    We prove that our generalized Sobolev spaces can be characte\-rized as variable exponent potential spaces and as variable
    exponent Triebel-Lizorkin type spaces.
\end{abstract}

%%%%%%%%%%%%%%%%%%%%%%%%%%%%%%%%%%%%%%%%%%%%%%%%%%%%%%%%%%%%%%%%%%%%%%%%%%%%%%
\section{Introduction} \label{sec:intro}
%%%%%%%%%%%%%%%%%%%%%%%%%%%%%%%%%%%%%%%%%%%%%%%%%%%%%%%%%%%%%%%%%%%%%%%%%%%%%%

Sobolev spaces associated with orthogonal systems have been studied in the last years. Bongioanni and Torrea (\cite{BT1} and \cite{BT2})
defined Sobolev spaces in the Hermite and Laguerre settings. Sobolev spaces associated with ultraspherical expansions were investigated
by Betancor, Fari\~na, Rodr\'{\i}guez-Mesa, Testoni and Torrea \cite{BFRTT1}. The study in \cite{BFRTT1} was extended recently to Jacobi
expansions by Langowski \cite{La}.

In this paper we define variable exponent Sobolev spaces in the  Jacobi context. We now describe our main results.

Consider a measurable function $p:\Omega\subseteq{\Bbb R}^n \longrightarrow [1,\infty)$. By $L^{p(\cdot)}(\Omega)$ we denote the variable
exponent Lebesgue space that consists of all those measurable functions on $\Omega$ such that for some $\lambda >0$
$$\int_{\Omega}{\left({{|f(x)|}\over{\lambda}}\right)^{p(x)}dx}
    <\infty.$$
It is a Banach space with the Luxermburg norm defined by
$$\|f\|_{L^{p(\cdot)}(\Omega)}
    =\inf\left\{\lambda>0\ : \ \int_{\Omega}{\left({{|f(x)|}\over{\lambda}}\right)^{p(x)}dx}\leq 1\right\}, \quad f\in L^{p(\cdot)}(\Omega).$$
 By $p'(\cdot)$ we represent the conjugate variable exponent.
A complete study of $L^{p(\cdot)}$-spaces can be found in \cite{DHHR}.

We define ${\mathcal P}(\Omega)$ as the set of measurable functions $p:\ \Omega\ \longrightarrow\ [1,\infty)$ such that
$$p_-=\mathrm{ess}\, \inf\{p(x)\ : \ x\in\Omega\}>1
\qquad \mbox{and} \qquad
p_+=\mathrm{ess}\, \sup\{p(x)\ : \ x\in\Omega\}<\infty.$$
The  Hardy-Littlewood maximal operator ${\mathcal M}$ is defined as
$${\mathcal M}f(x)=\sup_{B\ni x}{1\over{|B|}}\int_{B}{f(y)dy}, \quad x\in\Omega.$$
The set $B$ in the supremum represents a ball and $|B|$ denotes its Lebesgue measure.

We define ${\mathcal B}(\Omega)$ as the subset of ${\mathcal P}(\Omega)$ that consists of all those measurable
functions $p$ such that the maximal operator  ${\mathcal M}$ is bounded from
$L^{p(\cdot)}(\Omega)$ into itself. Diening \cite[Theorem 3.5]{Die1} proved that if $\Omega$ is a bounded subset of
${\Bbb R}^n$, $p\in{\mathcal P}(\Omega)$ and there exists $C>0$ such that
\begin{equation}\label{eq:log}
    |p(x)-p(y)|\leq{C\over{-\log|x-y|}}, \quad x,y\in\Omega,\ \ |x-y|\leq 1/2,
\end{equation}
then $p\in{\mathcal B}(\Omega)$.

Many classical operators in harmonic analysis (maximal operator, singular integrals, Fourier multipliers,
commutators, fractional integrals, ...) have been studied in variable  $L^{p(\cdot)}$-spaces (see, for instance,
\cite{CFMP}, \cite{DHHR}, \cite{DR} and \cite{Sa}).

Let $k\in\Bbb N$, where by $\Bbb N$ we represent the set of positive integer with zero included,  and $p\in{\mathcal P}(\Omega)$. A measurable function $f$ on $\Omega$ is in the generalized
Sobolev space $W^{k,p(\cdot)}(\Omega)$ if its weak partial derivatives $D^{\alpha}f\in L^{p(\cdot)}(\Omega)$,
$\alpha\in{\Bbb N}^n$ and $0\leq|\alpha|\leq k$. The norm in $W^{k,p(\cdot)}(\Omega)$ is defined by
$$\|f\|_{W^{k,p(\cdot)}(\Omega)}
    =\sum_{|\alpha|=0}^k{\|D^{\alpha}f\|_{L^{p(\cdot)}(\Omega)}}, \quad f\in W^{k,p(\cdot)}(\Omega).$$
It turns out that $W^{k,p(\cdot)}(\Omega)$ is a Banach space.

Variable exponent Sobolev spaces $W^{k,p(\cdot)}(\Omega)$ have been studied by a lot of authors in this century. Applications of these
generalized Sobolev spaces can be seen in \cite[Part III]{DHHR}.

Now we turn to the Harmonic Analysis associated with the Jacobi differential operator $L_{\alpha,\beta}$ for $\alpha,\beta >-1$, which is defined as
$$L_{\alpha,\beta}
    =-{{d^2}\over{d{\theta}^2}}-{{1-4{\alpha}^2}\over{16{\sin}^2\frac{\theta}{2}}}-{{1-4{\beta}^2}\over{16{\cos}^2\frac{\theta}{2}}},  \quad \mbox{on}\ (0,\pi).$$
This type of analysis  has emerged as a prolific area of interest
(see \cite{BU}, \cite{CNZ}, \cite{CNS}, \cite{La}, \cite{Li}, \cite{LSL}, \cite{NR}, \cite{NSjSz} and \cite{Stem3},  amongst others).

The Jacobi operator admits the following decomposition
$$L_{\alpha,\beta}
    =D_{\alpha,\beta}^*D_{\alpha,\beta}+\left({{\alpha+\beta+1}\over 2}\right)^2,$$
where
\begin{align*}
    D_{\alpha,\beta}
        & ={d\over {d\theta}}-{{2\alpha+1}\over 4}\cot\frac{\theta}{2}+{{2\beta+1}\over 4}\tan\frac{\theta}{2} \\
        & = \Big( \sin \frac{\theta}{2} \Big)^{\alpha + 1/2} \Big( \cos \frac{\theta}{2} \Big)^{\beta + 1/2}
            \frac{d}{d\theta} \Big[ \Big(\sin \frac{\theta}{2} \Big)^{-\alpha - 1/2} \Big( \cos \frac{\theta}{2} \Big)^{-\beta - 1/2} \Big],
\end{align*}
and $D_{\alpha,\beta}^*$ is the formal adjoint of $D_{\alpha,\beta}$ in $L^2(0,\pi)$. When $\alpha=\beta$
the Jacobi operator $L_{\alpha,\beta}$ reduces to the ultraspherical operator $L_{\lambda}$, $\lambda =\alpha+1/2$,
considered in \cite{BFRTT1}. According to \cite[(4.24.2)]{Sz} we have that, for every $n\in\Bbb N$,
$$L_{\alpha,\beta}{\phi}_n^{\alpha,\beta}
    ={\lambda}_n^{\alpha,\beta}{\phi}_n^{\alpha,\beta},$$
where ${\lambda}_n^{\alpha,\beta}=(n+{{\alpha+\beta+1}\over 2})^2$ and
$${\phi}_n^{\alpha,\beta}(\theta)
    =\Big(\sin {{\theta}\over 2}\Big)^{\alpha+1/2}\Big(\cos {{\theta}\over 2}\Big)^{\beta+1/2}\mathcal{P}_n^{\alpha,\beta}(\theta), \quad \theta\in(0,\pi).$$
If $p_n^{\alpha,\beta}$ denotes the $n$-th Jacobi polynomial considered in Szeg\"o's monograph, then
$\mathcal{P}_n^{\alpha,\beta}=d_n^{\alpha,\beta}p_n^{\alpha,\beta}$, where $d_n^{\alpha,\beta}$ is a normalization constant,
for every $n\in\Bbb N$. The system $\{{\phi}_n^{\alpha,\beta}\}_{n\in\Bbb N}$ is orthonormal and complete in $L^2(0,\pi)$.
We define the Jacobi operator ${\mathcal L}_{\alpha,\beta}$ by
$${\mathcal L}_{\alpha,\beta}f
    =\sum_{n=0}^{\infty}{{\lambda}_n^{\alpha,\beta}c_n^{\alpha,\beta}(f){\phi}_n^{\alpha,\beta}},  \quad f\in D({\mathcal L}_{\alpha,\beta}).$$
Here, for every $f\in L^2(0,\pi)$ and $n\in\Bbb N$,
$$c_n^{\alpha,\beta}(f)
    =\int_0^{\pi}{{\phi}_n^{\alpha,\beta}(\theta)f(\theta)d\theta},$$
and by $D({\mathcal L}_{\alpha,\beta})$ we denote the domain of ${\mathcal L}_{\alpha,\beta}$ given by
$$D({\mathcal L}_{\alpha,\beta})
    =\{f\in L^2(0,\pi)\ : \ \sum_{n=0}^{\infty}{({\lambda}_n^{\alpha,\beta})^2|c_n^{\alpha,\beta}(f)|^2}<\infty\}.$$
Note that $C_c^{\infty}(0,\pi)$, the space of smooth function with compact support in $(0,\pi)$, is contained in
$D({\mathcal L}_{\alpha,\beta})$ and hence,
$${\mathcal L}_{\alpha,\beta}f
    =L_{\alpha,\beta}f, \quad f\in C_c^{\infty}(0,\pi).$$
${\mathcal L}_{\alpha,\beta}$ is a positive and selfadjoint operator in $L^2(0,\pi)$. Let us note that $-{\mathcal L}_{\alpha,\beta}$
generates a semigroup of operators $\{W_t^{\alpha,\beta}\}_{t>0}$ in $L^2(0,\pi)$ where, for every $t>0$,
$$W_t^{\alpha,\beta}f
    =\sum_{n=0}^{\infty}{e^{-t{\lambda}_n^{\alpha,\beta}}c_n^{\alpha,\beta}(f){\phi}_n^{\alpha,\beta}}, \quad f\in L^2(0,\pi).$$
Moreover, for every $t>0$ and $f\in L^2(0,\pi)$,
$$W_t^{\alpha,\beta}f(\theta)
    =\int_0^{\pi}{W_t^{\alpha,\beta}(\theta,\varphi)f(\varphi)d\varphi}, \quad \theta\in (0,\pi),$$
where
$$W_t^{\alpha,\beta}(\theta,\varphi)
    =\sum_{n=0}^{\infty}{e^{-t{\lambda}_n^{\alpha,\beta}}{\phi}_n^{\alpha,\beta}(\theta){\phi}_n^{\alpha,\beta}(\varphi)}, \quad \theta,\varphi\in (0,\pi)\ \mbox{and}\ t>0.$$
$\{W_t^{\alpha,\beta}\}_{t>0}$ is called the heat semigroup associated with the Jacobi operator ${\mathcal L}_{\alpha,\beta}$.
By $\{P_t^{\alpha,\beta}\}_{t>0}$ we denote the Poisson semigroup defined by ${\mathcal L}_{\alpha,\beta}$.
According to the subordination formula, we can write, for every $t>0$ and $f\in L^2(0,\pi)$,
$$ P_t^{\alpha,\beta}f(\theta)
     =\int_0^{\pi}{P_t^{\alpha,\beta}(\theta,\varphi)f(\varphi)d\varphi}, \quad \theta\in (0,\pi),$$
where
\begin{equation}\label{eq:3.1.1}
    P_t^{\alpha,\beta}(\theta,\varphi)
        ={t\over{\sqrt{4\pi}}}\int_0^{\infty}{{{e^{-t^2/4u}}\over{u^{3/2}}}W_u^{\alpha,\beta}(\theta,\varphi)du}, \quad \theta,\varphi\in (0,\pi).
\end{equation}

Jacobi Sobolev spaces were studied by Langowski \cite{La}. We now introduce variable exponent Jacobi Sobolev spaces.
Assume that $p\in{\mathcal P}(0,\pi)$ and $k\in\Bbb N$. We say that a measurable function $f\in L^{p(\cdot)}(0,\pi)$
is in the variable Jacobi Sobolev space $W_{\alpha,\beta}^{k,p(\cdot)}(0,\pi)$ if ${\mathbb{D}}_{\alpha,\beta}^\ell f\in L^{p(\cdot)}(0,\pi)$,
for every $\ell\in\Bbb N$, $0\leq\ell\leq k$, with  ${\mathbb{D}}_{\alpha,\beta}^0 f=f$ and for $\ell\geq 1$,
$${\mathbb{D}}_{\alpha,\beta}^\ell =D_{\alpha+l-1,\beta+l-1}\circ\ ...\ \circ D_{\alpha+1,\beta+1}\circ D_{\alpha,\beta},$$
is understood in a weak sense. On $W_{\alpha,\beta}^{k,p(\cdot)}(0,\pi)$ we consider the norm defined by
$$\|f\|_{W_{\alpha,\beta}^{k,p(\cdot)}(0,\pi)}
    =\|f\|_{L^{p(\cdot)}(0,\pi)} + \sum_{l=1}^k{\|{\mathbb{D}}_{\alpha,\beta}^\ell f\|_{L^{p(\cdot)}(0,\pi)}}, \quad f\in W_{\alpha,\beta}^{k,p(\cdot)}(0,\pi).$$
Thus, $W_{\alpha,\beta}^{k,p(\cdot)}(0,\pi)$ becomes a Banach space.
See the discussion in \cite{La} (and also in \cite{BFRTT1}) for the use of the derivatives ${\mathbb{D}}_{\alpha,\beta}^\ell $,
instead of the more natural choice $D_{\alpha,\beta}^\ell=D_{\alpha,\beta}\circ\ ...\ \circ D_{\alpha,\beta}$.

Let $\gamma>0$ and assume that $\alpha+\beta \neq -1$. The negative power
${\mathcal L}_{\alpha,\beta}^{-\gamma}$ of ${\mathcal L}_{\alpha,\beta}$ is given by
\begin{equation}\label{eq:negpow}
    {\mathcal L}_{\alpha,\beta}^{-\gamma}f
            =\sum_{n=0}^{\infty}{(\lambda_n^{\alpha,\beta})^{-\gamma}c_n^{\alpha,\beta}(f){\phi}_n^{\alpha,\beta}}, \quad f\in L^2(0,\pi).
\end{equation}
 ${\mathcal L}_{\alpha,\beta}^{-\gamma}$ defines a one to one and bounded operator from
 $L^{p(\cdot)}(0,\pi)$ into itself (see Propositions \ref{propM3} and \ref{propM4} below).
 The variable exponent Jacobi potential space $H_{\alpha,\beta}^{\gamma,p(\cdot)}(0,\pi)$
 consists of all those functions  $f\in L^{p(\cdot)}(0,\pi)$  such that $f={\mathcal L}_{\alpha,\beta}^{-\gamma}g$ for some (unique)
 $g\in L^{p(\cdot)}(0,\pi)$. We considerer in $H_{\alpha,\beta}^{\gamma,p(\cdot)}(0,\pi)$ the following norm
$$\|f\|_{H_{\alpha,\beta}^{\gamma,p(\cdot)}(0,\pi)}
    =\|g\|_{L^{p(\cdot)}(0,\pi)}, \quad f={\mathcal L}_{\alpha,\beta}^{-\gamma}g\in H_{\alpha,\beta}^{\gamma,p(\cdot)}(0,\pi).$$
Endowed with this norm $H_{\alpha,\beta}^{\gamma,p(\cdot)}(0,\pi)$  is a Banach space.

The variable exponent version of \cite[Theorem A]{La} is given in the following theorem.

\begin{Th}\label{Th1}
    Let $\alpha, \beta \geq -1/2$ such that $\alpha+\beta \neq -1$ and $k\in\Bbb N$, $k\geq 1$.
    Assume that $p\in{\mathcal B}(0,\pi)$.
    Then, $H_{\alpha,\beta}^{k/2,p(\cdot)}(0,\pi)=W_{\alpha,\beta}^{k,p(\cdot)}(0,\pi)$. Moreover, the norms
    $\|\cdot\|_{H_{\alpha,\beta}^{k/2,p(\cdot)}(0,\pi)}$ and $\|\cdot\|_{W_{\alpha,\beta}^{k,p(\cdot)}(0,\pi)}$ are equivalent.
\end{Th}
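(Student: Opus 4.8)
The plan is to establish the set equality together with the two-sided norm estimate by proving the continuous inclusions $H_{\alpha,\beta}^{k/2,p(\cdot)}(0,\pi)\hookrightarrow W_{\alpha,\beta}^{k,p(\cdot)}(0,\pi)$ and $W_{\alpha,\beta}^{k,p(\cdot)}(0,\pi)\hookrightarrow H_{\alpha,\beta}^{k/2,p(\cdot)}(0,\pi)$ separately, working first on the dense span of $\{\phi_n^{\alpha,\beta}\}$ (or on $C_c^\infty(0,\pi)$) and extending by density using that $\mathcal{L}_{\alpha,\beta}$ is positive and self-adjoint. The algebraic backbone is the factorization $\mathcal{L}_{\alpha,\beta}=D_{\alpha,\beta}^*D_{\alpha,\beta}+(\tfrac{\alpha+\beta+1}{2})^2$ together with the intertwining identity $D_{\alpha,\beta}\mathcal{L}_{\alpha,\beta}=\mathcal{L}_{\alpha+1,\beta+1}D_{\alpha,\beta}$, which propagates to $\mathbb{D}_{\alpha,\beta}^\ell\mathcal{L}_{\alpha,\beta}^s=\mathcal{L}_{\alpha+\ell,\beta+\ell}^s\mathbb{D}_{\alpha,\beta}^\ell$. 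From these one reads off
$$\mathbb{D}_{\alpha,\beta}^\ell\phi_n^{\alpha,\beta}=c_{n,\ell}\,\phi_{n-\ell}^{\alpha+\ell,\beta+\ell}, \qquad c_{n,\ell}=\prod_{j=0}^{\ell-1}\sqrt{(n-j)(n+j+\alpha+\beta+1)},$$
so that $c_{n,\ell}\sim(\lambda_n^{\alpha,\beta})^{\ell/2}$ as $n\to\infty$, and the ratios $c_{n,\ell}(\lambda_n^{\alpha,\beta})^{-\ell/2}$ together with their reciprocals on the modes $n\geq\ell$ are bounded sequences. We shall use that $\alpha,\beta\geq-1/2$ makes each $\phi_n^{\alpha,\beta}$ bounded on $(0,\pi)$, while $\alpha+\beta\neq-1$ gives $\lambda_0^{\alpha,\beta}>0$, so that $\mathcal{L}_{\alpha,\beta}$ is boundedly invertible and the negative powers of Propositions~\ref{propM3} and~\ref{propM4} are at our disposal.

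For $H\hookrightarrow W$, let $f=\mathcal{L}_{\alpha,\beta}^{-k/2}g$ with $g\in L^{p(\cdot)}(0,\pi)$. For $0\leq\ell\leq k$ we factor
$$\mathbb{D}_{\alpha,\beta}^\ell f=\big(\mathbb{D}_{\alpha,\beta}^\ell\mathcal{L}_{\alpha,\beta}^{-\ell/2}\big)\,\mathcal{L}_{\alpha,\beta}^{-(k-\ell)/2}g,$$
where the first factor is the $\ell$-th order Jacobi Riesz transform (multiplier $c_{n,\ell}(\lambda_n^{\alpha,\beta})^{-\ell/2}$) and the second a negative power. Granting the $L^{p(\cdot)}$-boundedness of both factors, we obtain $\|\mathbb{D}_{\alpha,\beta}^\ell f\|_{L^{p(\cdot)}(0,\pi)}\lesssim\|g\|_{L^{p(\cdot)}(0,\pi)}=\|f\|_{H_{\alpha,\beta}^{k/2,p(\cdot)}(0,\pi)}$, and summing over $\ell$ yields $\|f\|_{W_{\alpha,\beta}^{k,p(\cdot)}(0,\pi)}\lesssim\|f\|_{H_{\alpha,\beta}^{k/2,p(\cdot)}(0,\pi)}$.

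For the reverse inclusion it suffices to prove $\|\mathcal{L}_{\alpha,\beta}^{k/2}f\|_{L^{p(\cdot)}(0,\pi)}\lesssim\|f\|_{W_{\alpha,\beta}^{k,p(\cdot)}(0,\pi)}$, since then $f=\mathcal{L}_{\alpha,\beta}^{-k/2}(\mathcal{L}_{\alpha,\beta}^{k/2}f)\in H_{\alpha,\beta}^{k/2,p(\cdot)}(0,\pi)$. We split $f$ through the projection $P_{<k}$ onto $\mathrm{span}\{\phi_0^{\alpha,\beta},\dots,\phi_{k-1}^{\alpha,\beta}\}$. On the low modes, $\mathcal{L}_{\alpha,\beta}^{k/2}P_{<k}$ is a finite-rank operator with bounded kernel, hence controlled by $\|f\|_{L^{p(\cdot)}(0,\pi)}$ via Hölder's inequality in the variable-exponent pairing (using $\phi_n^{\alpha,\beta}\in L^{p'(\cdot)}(0,\pi)$). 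On the high modes, the relation $(\mathbb{D}_{\alpha,\beta}^k)^*\phi_{n-k}^{\alpha+k,\beta+k}=c_{n,k}\,\phi_n^{\alpha,\beta}$ gives, for $n\geq k$,
$$\mathcal{L}_{\alpha,\beta}^{k/2}\phi_n^{\alpha,\beta}=\frac{(\lambda_n^{\alpha,\beta})^{k/2}}{c_{n,k}}\,(\mathbb{D}_{\alpha,\beta}^k)^*\phi_{n-k}^{\alpha+k,\beta+k},$$
so that $\mathcal{L}_{\alpha,\beta}^{k/2}(I-P_{<k})f=U\big(\mathbb{D}_{\alpha,\beta}^k f\big)$, where $U$ is the reverse Riesz transform with bounded multiplier $(\lambda_n^{\alpha,\beta})^{k/2}/c_{n,k}$. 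Granting the $L^{p(\cdot)}$-boundedness of $U$, we conclude $\|\mathcal{L}_{\alpha,\beta}^{k/2}f\|_{L^{p(\cdot)}(0,\pi)}\lesssim\|\mathbb{D}_{\alpha,\beta}^k f\|_{L^{p(\cdot)}(0,\pi)}+\|f\|_{L^{p(\cdot)}(0,\pi)}\leq\|f\|_{W_{\alpha,\beta}^{k,p(\cdot)}(0,\pi)}$. Equivalently, one may argue by duality, pairing $\mathcal{L}_{\alpha,\beta}^{k/2}f$ against $g\in L^{p'(\cdot)}(0,\pi)$, transferring $(\mathbb{D}_{\alpha,\beta}^k)^*$ onto $f$ by integration by parts, and invoking $p'\in\mathcal{B}(0,\pi)$ whenever $p\in\mathcal{B}(0,\pi)$.

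The technical heart, and the step I expect to be the main obstacle, is the $L^{p(\cdot)}$-boundedness of the forward Riesz transforms $\mathbb{D}_{\alpha,\beta}^\ell\mathcal{L}_{\alpha,\beta}^{-\ell/2}$ and of the reverse transform $U$. We plan to derive these from variable-exponent extrapolation, which applies precisely because $p\in\mathcal{B}(0,\pi)$ (equivalently, the maximal operator is bounded on both $L^{p(\cdot)}$ and $L^{p'(\cdot)}$), thereby reducing matters to weighted $L^p$ inequalities with Muckenhoupt weights; the latter rest in turn on Calderón--Zygmund-type kernel bounds for the Jacobi Riesz kernels, available in the Jacobi setting (see \cite{La} and the references therein). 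The remaining, more routine, points to be settled are the agreement of the weakly defined iterates $\mathbb{D}_{\alpha,\beta}^\ell$ with their spectral action on $\{\phi_n^{\alpha,\beta}\}$, the validity of the adjoint/integration-by-parts identities for $f\in W_{\alpha,\beta}^{k,p(\cdot)}(0,\pi)$, and the passage from the dense eigenfunction span to general $f$, all following the pattern of \cite{BFRTT1} and \cite{La}.
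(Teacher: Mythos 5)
Your proposal is correct and follows essentially the same route as the paper's proof: the embedding $H_{\alpha,\beta}^{k/2,p(\cdot)}(0,\pi)\hookrightarrow W_{\alpha,\beta}^{k,p(\cdot)}(0,\pi)$ via the factorization ${\mathbb{D}}_{\alpha,\beta}^\ell\mathcal{L}_{\alpha,\beta}^{-k/2}=R_{\alpha,\beta}^\ell\mathcal{L}_{\alpha,\beta}^{-(k-\ell)/2}$, the reverse embedding by applying your operator $U$ to ${\mathbb{D}}_{\alpha,\beta}^k f$ with the low modes absorbed into $\|f\|_{L^{p(\cdot)}(0,\pi)}$ (your $U$ is exactly the paper's composition $m(\mathcal{L}_{\alpha,\beta})R_{\alpha,\beta}^{k,*}$ from assertion (c) and Proposition \ref{propT3}), with all operator bounds obtained by extrapolation from weighted $L^p$ estimates resting on local Calder\'on--Zygmund theory, together with the density of $S_{\alpha,\beta}$, which is precisely the paper's scheme (a)--(d). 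The one point to sharpen is that the $L^{p(\cdot)}$-boundedness of $U$ does not follow from the mere boundedness of the sequence $(\lambda_n^{\alpha,\beta})^{k/2}/c_{n,k}$: as in the paper, one factors $U=m(\mathcal{L}_{\alpha,\beta})R_{\alpha,\beta}^{k,*}$ with $m=\phi M$, $M(x)=x^k/\prod_{j=0}^{k-1}(x-\lambda_j^{\alpha,\beta})$, and applies the Mihlin/Meda-type spectral multiplier theorem (Propositions \ref{propM1} and \ref{propM2}) to this smooth factor, in addition to the Calder\'on--Zygmund kernel bounds for the Riesz transforms themselves.
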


The proof of Theorem \ref{Th1} is done in several steps. For a suitable function $p$ we will prove.
\begin{enumerate}
\item[(a)] The linear subspace $S_{\alpha,\beta}=\text{span}\{\phi_n^{\alpha,\beta}\}$ is dense in both $W_{\alpha,\beta}^{k,p(\cdot)}(0,\pi)$ and $H_{\alpha,\beta}^{k/2,p(\cdot)}(0,\pi)$.
\item[(b)]  The higher order Jacobi-Riesz transforms defined by
\begin{equation}\label{eq:3.1}
    R_{\alpha,\beta}^k
        ={\mathbb{D}}_{\alpha,\beta}^k{\mathcal L}_{\alpha,\beta}^{-k/2}\ \ \mbox{and}\ \ R_{\alpha,\beta}^{k,*}
        ={\mathbb{D}}_{\alpha,\beta}^{k,*}{\mathcal L}_{\alpha+k,\beta+k}^{-k/2}, \quad k\in\Bbb N.
\end{equation}
are bounded operators on $L^{p(\cdot)}(0,\pi)$.
\item[(c)] We define a multiplier operator $m(\mathcal{L}_{\alpha,\beta})$ in such a way that
 $$m({\mathcal L}_{\alpha,\beta}) R_{\alpha,\beta}^{k,*}R_{\alpha,\beta}^k f
 	= f - \sum_{n=0}^{k-1} c_n^{\alpha, \beta}(f) \phi_n^{\alpha,\beta}
	,\;\;\;\mbox{ for all}\; f\in S_{\alpha,\beta},$$
and prove its boundedness on $L^{p(\cdot)}(0,\pi)$.
\item[(d)] For every $\gamma >0$, the potential operator ${\mathcal L}_{\alpha,\beta}^{-\gamma}$ is also bounded on $L^{p(\cdot)}(0,\pi)$.
\end{enumerate}

According with \cite{CFMP} in order to get the boundedness of operators defined on $L^{p(\cdot)}(0,\pi)$ it is sufficient to prove boundedness of them on the weighted $L^r$-spaces, $L^r_{\omega}(0,\pi)$ for every $\omega\in A_r(0,\pi)$, the class of Muckenhoupt weights , and some $1<r<\infty$. Let us note that, taking into account \cite[Theorem 1.2]{CFMP}, we can change the condition "$p\in{\mathcal P}(0,\pi)$ and for some $p_0\in(1,p_-)$, $(p(\cdot)/p_0)'\in{\mathcal B}(0,\pi)$" used in \cite[Theorem 1.3]{CFMP} by $p\in{\mathcal B}(0,\pi)$, because if $p\in{\mathcal B}(0,\pi)$ there exists an extension $\widetilde{p}\in{\mathcal B}(\Bbb R)$ of $p$ from $(0,\pi)$ to $\Bbb R$.

Once of all this has been proved, the proof of Theorem \ref{Th1} is as follows:

From assertion (a) it is enough to prove the equivalence of norms for functions in $S_{\alpha,\beta}$. Let us take then $f,g\in S_{\alpha,\beta}$ such that $f={\mathcal L}_{\alpha,\beta}^{-k/2}g$. From assertions (b) and (c) we get
 \begin{align*}
        \|g\|_{L^{p(\cdot)}(0,\pi)}
            \leq & C \Big(\|m({\mathcal L}_{\alpha,\beta})R_{\alpha,\beta}^{k,*}{\mathbb{D}}_{\alpha,\beta}^kf\|_{L^{p(\cdot)}(0,\pi)}  + \|f\|_{L^{p(\cdot)}(0,\pi)} \Big) \\
         \leq & C \Big( \|{\mathbb{D}}_{\alpha,\beta}^kf\|_{L^{p(\cdot)}(0,\pi)} + \|f\|_{L^{p(\cdot)}(0,\pi)}\Big).
    \end{align*}
 Thus, we obtain
    $$\|f\|_{H^{k/2,p(\cdot)}_{\alpha,\beta}(0,\pi)}
        \leq C\|f\|_{W^{k,p(\cdot)}_{\alpha,\beta}(0,\pi)}.$$

On the other hand, by using assertions (b) and (d),  for every $m\in\Bbb N$ such that $0\leq m\leq k$,
    \begin{align*}
        \|{\mathbb{D}}_{\alpha,\beta}^mf\|_{L^{p(\cdot)}(0,\pi)}
            = & \|{\mathbb{D}}_{\alpha,\beta}^m{\mathcal L}_{\alpha,\beta}^{-k/2}g\|_{L^{p(\cdot)}(0,\pi)}
         = \|R_{\alpha,\beta}^m{\mathcal L}_{\alpha,\beta}^{-(k-m)/2}g\|_{L^{p(\cdot)}(0,\pi)}\leq C\|g\|_{L^{p(\cdot)}(0,\pi)}.
    \end{align*}
    Hence,
    $$\|f\|_{W^{k,p(\cdot)}_{\alpha,\beta}(0,\pi)}
        \leq C\|f\|_{H^{k/2,p(\cdot)}_{\alpha,\beta}(0,\pi)}.$$

We now define the positive power of the Jacobi operator
${\mathcal L}_{\alpha,\beta}$ according to the ideas of Lions and Peetre \cite[Chapter VII, Section 2]{LP} and Berens, Butzer
and Westphal \cite{BBW}. Let $\gamma >0$ and choose $r\in\Bbb N$
such that $\gamma<r\leq\gamma+1$. For every $\varepsilon>0$ and $f\in  L^{p(\cdot)}(0,\pi)$, we define
\begin{equation}\label{eqH1}
    I_{\varepsilon}^{\gamma,r}f
        =C_{\gamma,r} \int_{\varepsilon}^{\infty}{{\left(I-W_u^{\alpha,\beta}\right)^r f}\over{u^{\gamma+1}}}du\ ,
\end{equation}
where the integral is understood in the $ L^{p(\cdot)}$-Bochner sense and
$\displaystyle C_{\gamma,r}
    =\left(\int_{0}^{\infty}{{(1-e^{-u})^r}\over{u^{\gamma+1}}}du\right)^{-1}$. Note that, for every $f\in  L^{p(\cdot)}(0,\pi)$,
$$\int_{\varepsilon}^{\infty} \frac{\|\left(I-W_u^{\alpha,\beta}\right)^rf\|_{ L^{p(\cdot)}(0,\pi)}}{u^{\gamma+1}}  du  <\infty.$$
Moreover, the operator $I_{\varepsilon}^{\gamma,r}$ is bounded from $L^{p(\cdot)}(0,\pi)$ into itself (Proposition~\ref{prop:Iepsgamr}).
We consider the domain of ${\mathcal L}_{\alpha,\beta}^\gamma$
$$D_{p(\cdot)}({\mathcal L}_{\alpha,\beta}^\gamma)
    = \Big\{ f\in  L^{p(\cdot)}(0,\pi) \ : \  \lim_{\varepsilon\rightarrow 0^+} I_{\varepsilon}^{\gamma,r} f \text{ exists in } L^{p(\cdot)}(0,\pi) \Big\},$$
and we define
\begin{equation}\label{eqH2}
    {\mathcal L}_{\alpha,\beta}^\gamma f
        = \lim_{\varepsilon\rightarrow 0^+} I_{\varepsilon}^{\gamma,r} f, \quad f\in D_{p(\cdot)}({\mathcal L}_{\alpha,\beta}^\gamma).
\end{equation}
As it will be shown in Section~\ref{sec:7}, in the definition of ${\mathcal L}_{\alpha,\beta}^\gamma$ we can take any $r\in\Bbb N,\ r> \gamma$.
Next, we characterize the Jacobi potential space $H_{\alpha,\beta}^{\gamma,p(\cdot)}(0,\pi)$ as the domain of ${\mathcal L}_{\alpha,\beta}^\gamma$.
\begin{Th}\label{ThZ}
    Let $\gamma>0$ and $\alpha, \beta \geq -1/2$ such that $\alpha + \beta \neq -1$. Assume that $p\in{\mathcal B}(0,\pi)$.
    Then, $H_{\alpha,\beta}^{\gamma,p(\cdot)}(0,\pi)= D_{p(\cdot)}({\mathcal L}_{\alpha,\beta}^\gamma)$.
    Moreover, for every $f\in  D_{p(\cdot)}({\mathcal L}_{\alpha,\beta}^\gamma)$,
    $${\mathcal L}_{\alpha,\beta}^{-\gamma}{\mathcal L}_{\alpha,\beta}^{\gamma}f=f,$$
    and, for every $f\in  L^{p(\cdot)}(0,\pi)$,
    $${\mathcal L}_{\alpha,\beta}^{\gamma}{\mathcal L}_{\alpha,\beta}^{-\gamma}f=f.$$
\end{Th}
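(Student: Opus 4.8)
The plan is to reduce the asserted set equality to the two inversion identities and then prove those. First I would observe that $H_{\alpha,\beta}^{\gamma,p(\cdot)}(0,\pi)=D_{p(\cdot)}({\mathcal L}_{\alpha,\beta}^\gamma)$ is a formal consequence of the two displayed identities. Indeed, ${\mathcal L}_{\alpha,\beta}^{\gamma}{\mathcal L}_{\alpha,\beta}^{-\gamma}f=f$ for every $f\in L^{p(\cdot)}(0,\pi)$ shows in particular that ${\mathcal L}_{\alpha,\beta}^{-\gamma}f\in D_{p(\cdot)}({\mathcal L}_{\alpha,\beta}^\gamma)$, whence $H_{\alpha,\beta}^{\gamma,p(\cdot)}={\mathcal L}_{\alpha,\beta}^{-\gamma}(L^{p(\cdot)})\subseteq D_{p(\cdot)}({\mathcal L}_{\alpha,\beta}^\gamma)$; conversely, if $f\in D_{p(\cdot)}({\mathcal L}_{\alpha,\beta}^\gamma)$ then ${\mathcal L}_{\alpha,\beta}^\gamma f\in L^{p(\cdot)}(0,\pi)$, and ${\mathcal L}_{\alpha,\beta}^{-\gamma}{\mathcal L}_{\alpha,\beta}^\gamma f=f$ exhibits $f$ in the range of ${\mathcal L}_{\alpha,\beta}^{-\gamma}$, i.e. $f\in H_{\alpha,\beta}^{\gamma,p(\cdot)}$. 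So it suffices to prove the two inversion formulas.

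The computational heart is a calculation on the basis. The hypothesis $\alpha+\beta\neq-1$ guarantees $\lambda_n^{\alpha,\beta}=(n+\tfrac{\alpha+\beta+1}{2})^2>0$ for all $n$, so ${\mathcal L}_{\alpha,\beta}^{-\gamma}$ is well defined. Since $W_u^{\alpha,\beta}\phi_n^{\alpha,\beta}=e^{-u\lambda_n^{\alpha,\beta}}\phi_n^{\alpha,\beta}$, expanding $(I-W_u^{\alpha,\beta})^r$ and changing variables $v=u\lambda_n^{\alpha,\beta}$ in \eqref{eqH1}, together with the definition of $C_{\gamma,r}$, gives
$$I_{\varepsilon}^{\gamma,r}{\mathcal L}_{\alpha,\beta}^{-\gamma}\phi_n^{\alpha,\beta}=M(\varepsilon\lambda_n^{\alpha,\beta})\,\phi_n^{\alpha,\beta},\qquad M(s):=C_{\gamma,r}\int_s^{\infty}\frac{(1-e^{-v})^r}{v^{\gamma+1}}\,dv.$$
Thus $T_{\varepsilon}:=I_{\varepsilon}^{\gamma,r}{\mathcal L}_{\alpha,\beta}^{-\gamma}$ is the spectral multiplier with symbol $M(\varepsilon\lambda)$. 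Since $M(0^+)=1$, we obtain $T_{\varepsilon}h\to h$ in $L^{p(\cdot)}(0,\pi)$ for every $h\in S_{\alpha,\beta}$ (a finite combination of the $\phi_n^{\alpha,\beta}$).

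To pass from $S_{\alpha,\beta}$ to all of $L^{p(\cdot)}(0,\pi)$ I would prove $\sup_{\varepsilon>0}\|T_{\varepsilon}\|_{L^{p(\cdot)}\to L^{p(\cdot)}}<\infty$. The function $M$ is smooth on $(0,\infty)$ with $M(s)=1-O(s^{r-\gamma})$ as $s\to0^+$ (here $r>\gamma$) and $M(s)=O(s^{-\gamma})$ as $s\to\infty$, and a direct check shows the quantities $s^kM^{(k)}(s)$ are bounded on $(0,\infty)$; by the remark preceding the statement one may take $r$ as large as the required order of smoothness demands. Consequently the symbols $\lambda\mapsto M(\varepsilon\lambda)$ satisfy a H\"ormander--Mikhlin condition with constants independent of $\varepsilon$. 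By the spectral multiplier analysis for ${\mathcal L}_{\alpha,\beta}$ used in step (c) to treat $m({\mathcal L}_{\alpha,\beta})$, such symbols define operators bounded on $L^r_{\omega}(0,\pi)$, $\omega\in A_r$, for a fixed $r\in(1,\infty)$, with norms controlled by those H\"ormander constants, hence uniformly in $\varepsilon$; the extrapolation of \cite{CFMP} recalled above transfers this to a uniform $L^{p(\cdot)}$ bound. Together with the density of $S_{\alpha,\beta}$ in $L^{p(\cdot)}(0,\pi)$ and the convergence on that dense set, the uniform bound yields $T_{\varepsilon}h\to h$ in $L^{p(\cdot)}(0,\pi)$ for every $h$, which is precisely ${\mathcal L}_{\alpha,\beta}^{\gamma}{\mathcal L}_{\alpha,\beta}^{-\gamma}h=h$ and shows ${\mathcal L}_{\alpha,\beta}^{-\gamma}h\in D_{p(\cdot)}({\mathcal L}_{\alpha,\beta}^\gamma)$.

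Finally, the first identity follows from the second. If $f\in D_{p(\cdot)}({\mathcal L}_{\alpha,\beta}^\gamma)$, then $I_{\varepsilon}^{\gamma,r}f\to{\mathcal L}_{\alpha,\beta}^\gamma f=:g$ in $L^{p(\cdot)}(0,\pi)$. Applying the bounded operator ${\mathcal L}_{\alpha,\beta}^{-\gamma}$ (Propositions~\ref{propM3} and \ref{propM4}) and using that it commutes with each $W_u^{\alpha,\beta}$ on $S_{\alpha,\beta}$, hence passes through the Bochner integral \eqref{eqH1} and commutes with $I_{\varepsilon}^{\gamma,r}$, we get ${\mathcal L}_{\alpha,\beta}^{-\gamma}I_{\varepsilon}^{\gamma,r}f=I_{\varepsilon}^{\gamma,r}{\mathcal L}_{\alpha,\beta}^{-\gamma}f$, which converges to ${\mathcal L}_{\alpha,\beta}^{-\gamma}g$ by continuity and to $f$ by the already proven second identity applied with $h=f$. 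Uniqueness of limits gives ${\mathcal L}_{\alpha,\beta}^{-\gamma}{\mathcal L}_{\alpha,\beta}^{\gamma}f=f$. The main obstacle is the uniform-in-$\varepsilon$ boundedness of $T_{\varepsilon}$: the operators $I_{\varepsilon}^{\gamma,r}$ by themselves are not uniformly bounded, since their symbols $\lambda^{\gamma}M(\varepsilon\lambda)$ blow up as $\varepsilon\to0^+$, so one must genuinely treat $T_{\varepsilon}=I_{\varepsilon}^{\gamma,r}{\mathcal L}_{\alpha,\beta}^{-\gamma}$ as a single multiplier with the bounded, uniformly regular symbol $M(\varepsilon\lambda)$ and verify the H\"ormander-type estimates that feed the extrapolation.
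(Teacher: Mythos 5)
Your proposal is correct and takes essentially the same route as the paper: the paper likewise treats $I_{\varepsilon}^{\gamma,r}{\mathcal L}_{\alpha,\beta}^{-\gamma}$ as a single spectral multiplier (your symbol $M(\varepsilon\lambda)$ is exactly $C_{\gamma,r}H_\varepsilon(\lambda)$ from Lemma~\ref{lemZ1}), establishes uniform-in-$\varepsilon$ Mihlin estimates, feeds them into the weighted multiplier theorem and the extrapolation of \cite{CFMP} (Propositions~\ref{propM1} and \ref{propM2}), and concludes by density of $S_{\alpha,\beta}$ in $L^{p(\cdot)}(0,\pi)$. The only difference is organizational: the paper proves the two inversion identities by running the commutation argument in each direction, while you first prove ${\mathcal L}_{\alpha,\beta}^{\gamma}{\mathcal L}_{\alpha,\beta}^{-\gamma}=I$ on all of $L^{p(\cdot)}(0,\pi)$ and then deduce the other identity by uniqueness of limits, which is the same substance.
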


Segovia and Wheeden \cite{SW} characterized potential spaces by using Littlewood-Paley square functions.
In order to do this they introduced square functions involving fractional derivatives of the classical Poisson semigroup.
Inspired by \cite{SW}, Betancor, Fari\~na, Rodr\'{\i}guez-Mesa, Testoni and Torrea obtained characterizations using
vertical and area Littlewood-Paley functions for the potential spaces associated with the Hermite and
Ornstein-Uhlenbeck operators (\cite{BFRTT2}) and Schr\"odinger operators (\cite{BFRTT3}). We will characterize
our variable exponent Jacobi potential spaces by using Littlewood-Paley function defined via derivatives of the
Jacobi-Poisson semigroup.

Let $\gamma>0$ and $k \in \N$ such that $0<\gamma<k$. We consider the following Littlewood-Paley function
$$g_{\alpha,\beta}^{\gamma,k}(f)(\theta)
    = \left( \int_0^\infty \Big| t^{k-\gamma} \partial_t^k P_t^{\alpha, \beta} (f) (\theta) \Big|^2 \frac{dt}{t} \right)^{1/2}, \quad \theta \in (0,\pi).$$
We say that a measurable function $f \in L^{p(\cdot)}(0,\pi)$ is in $T_{\alpha, \beta}^{\gamma,k,p(\cdot)}(0,\pi)$ when
$g_{\alpha,\beta}^{\gamma,k}(f) \in L^{p(\cdot)}(0,\pi)$. On $T_{\alpha, \beta}^{\gamma,k,p(\cdot)}(0,\pi)$ we define the norm
$$\|f\|_{T_{\alpha, \beta}^{\gamma,k,p(\cdot)}(0,\pi)}
    =\|f\|_{L^{p(\cdot)}(0,\pi)}+  \|g_{\alpha,\beta}^{\gamma,k}(f)\|_{L^{p(\cdot)}(0,\pi)}, \quad f \in T_{\alpha, \beta}^{\gamma,k,p(\cdot)}(0,\pi).$$
Thus, $T_{\alpha, \beta}^{\gamma,k,p(\cdot)}(0,\pi)$ is a Banach space.

The space $T_{\alpha, \beta}^{\gamma,k,p(\cdot)}(0,\pi)$, which can be seen as a variable exponent Triebel-Lizorkin type space,
coincides with the variable exponent potential space $H_{\alpha,\beta}^{\gamma/2,p(\cdot)}(0,\pi)$.

\begin{Th}\label{Th2}
    Let $\alpha, \beta \geq -1/2$  such that $\alpha + \beta \neq -1$ and $0<\gamma<k$, $k \in \N$.
    Assume that $p \in \mathcal{B}(0,\pi)$.
    Then, $H_{\alpha,\beta}^{\gamma/2,p(\cdot)}(0,\pi) = T_{\alpha, \beta}^{\gamma,k,p(\cdot)}(0,\pi)$. Moreover, the norms
    $\| \cdot \|_{H_{\alpha,\beta}^{\gamma/2,p(\cdot)}(0,\pi)}$ and $\| \cdot \|_{T_{\alpha, \beta}^{\gamma,k,p(\cdot)}(0,\pi)}$
    are equivalent.
\end{Th}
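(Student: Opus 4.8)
The plan is to reduce Theorem~\ref{Th2} to a two-sided Littlewood--Paley estimate on $L^{p(\cdot)}(0,\pi)$. Write $\mu_n=\sqrt{\lambda_n^{\alpha,\beta}}=n+\frac{\alpha+\beta+1}{2}$; the hypotheses $\alpha,\beta\geq-1/2$ and $\alpha+\beta\neq-1$ give $\mu_n>0$ for every $n$, so $\mathcal L_{\alpha,\beta}^{\pm\gamma/2}$ are well defined on $S_{\alpha,\beta}$ and $\mathcal L_{\alpha,\beta}^{-\gamma/2}$ is bounded on $L^{p(\cdot)}(0,\pi)$ by assertion (d). Since $\partial_t^kP_t^{\alpha,\beta}\phi_n^{\alpha,\beta}=(-\mu_n)^ke^{-t\mu_n}\phi_n^{\alpha,\beta}$, a direct computation shows that, with $\psi(s)=s^{k-\gamma}e^{-s}$,
\[ t^{k-\gamma}\partial_t^kP_t^{\alpha,\beta}f=(-1)^k\,\psi\big(t\sqrt{\mathcal L_{\alpha,\beta}}\big)\,\mathcal L_{\alpha,\beta}^{\gamma/2}f , \]
so that $g_{\alpha,\beta}^{\gamma,k}(f)=G_\psi\big(\mathcal L_{\alpha,\beta}^{\gamma/2}f\big)$, where $G_\psi(g)=\big(\int_0^\infty|\psi(t\sqrt{\mathcal L_{\alpha,\beta}})g|^2\,\frac{dt}{t}\big)^{1/2}$. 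Everything then follows once we prove $\|G_\psi g\|_{L^{p(\cdot)}(0,\pi)}\approx\|g\|_{L^{p(\cdot)}(0,\pi)}$: taking $g=\mathcal L_{\alpha,\beta}^{\gamma/2}f$ and $f=\mathcal L_{\alpha,\beta}^{-\gamma/2}g$ turns this into the equivalence of the potential and Triebel--Lizorkin norms.

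First I would establish the upper estimate $\|G_\psi g\|_{L^{p(\cdot)}(0,\pi)}\le C\|g\|_{L^{p(\cdot)}(0,\pi)}$. Following the extrapolation scheme already used for assertions (b)--(d) and justified through \cite{CFMP}, it suffices to bound $G_\psi$ on $L^r_\omega(0,\pi)$ for every $\omega\in A_r(0,\pi)$ and some fixed $r\in(1,\infty)$. I would realize $G_\psi$ as a Littlewood--Paley--Stein operator, i.e.\ a singular integral with values in $L^2((0,\infty),dt/t)$, whose kernel is $\psi(t\sqrt{\mathcal L_{\alpha,\beta}})(\theta,\varphi)=(-1)^k t^{k-\gamma}\int_0^\pi \partial_t^k P_t^{\alpha,\beta}(\theta,\sigma)\,\mathcal L_{\alpha,\beta}^{-\gamma/2}(\sigma,\varphi)\,d\sigma$; the required size and H\"ormander-type smoothness bounds follow from the estimates for $\partial_t^k P_t^{\alpha,\beta}(\theta,\varphi)$ and for the kernel of $\mathcal L_{\alpha,\beta}^{-\gamma/2}$, together with the subordination formula \eqref{eq:3.1.1}, while the $L^2$-boundedness that starts the Calder\'on--Zygmund machinery comes from the spectral identity of the next step.

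For the reverse inequality I would use a reproducing formula. Since $0<\gamma<k$, the constant $c:=\int_0^\infty \psi(s)^2\,\frac{ds}{s}=2^{-2(k-\gamma)}\Gamma\big(2(k-\gamma)\big)$ is finite and positive, and by scaling $\int_0^\infty\psi(t\mu_n)^2\,\frac{dt}{t}=c$ for every $n$. This yields, for $g,h\in S_{\alpha,\beta}$, the polarization identity
\[ \langle g,h\rangle=\frac1c\int_0^\infty\big\langle \psi(t\sqrt{\mathcal L_{\alpha,\beta}})g,\ \psi(t\sqrt{\mathcal L_{\alpha,\beta}})h\big\rangle\,\frac{dt}{t}. \]
Applying Cauchy--Schwarz in $t$, the H\"older inequality for variable exponents, and the duality $\|g\|_{L^{p(\cdot)}}\approx\sup_{\|h\|_{L^{p'(\cdot)}}\le1}|\langle g,h\rangle|$ from \cite{DHHR}, I obtain
\[ \|g\|_{L^{p(\cdot)}(0,\pi)}\le C\,\|G_\psi g\|_{L^{p(\cdot)}(0,\pi)}\ \sup_{\|h\|_{L^{p'(\cdot)}}\le1}\|G_\psi h\|_{L^{p'(\cdot)}(0,\pi)}. \]
Because $p\in\mathcal B(0,\pi)$ forces $p'\in\mathcal B(0,\pi)$, the upper bound of the previous step applies to the exponent $p'(\cdot)$ and bounds the supremum by a constant, giving $\|g\|_{L^{p(\cdot)}}\le C\|G_\psi g\|_{L^{p(\cdot)}}$ on $S_{\alpha,\beta}$.

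Finally I would assemble the two inclusions. The inclusion $H_{\alpha,\beta}^{\gamma/2,p(\cdot)}\subseteq T_{\alpha,\beta}^{\gamma,k,p(\cdot)}$ is immediate: if $f=\mathcal L_{\alpha,\beta}^{-\gamma/2}g$ then $g_{\alpha,\beta}^{\gamma,k}(f)=G_\psi g$, so the upper bound and the boundedness of $\mathcal L_{\alpha,\beta}^{-\gamma/2}$ give $\|f\|_{T}\le C\|g\|_{L^{p(\cdot)}}=C\|f\|_{H}$. For the converse, working first on $S_{\alpha,\beta}$ (dense in both spaces, as in assertion (a)), the lower bound gives $\|\mathcal L_{\alpha,\beta}^{\gamma/2}f\|_{L^{p(\cdot)}}\le C\|f\|_{T}$; approximating $f\in T_{\alpha,\beta}^{\gamma,k,p(\cdot)}$ by $f_j\in S_{\alpha,\beta}$, the sequence $g_j=\mathcal L_{\alpha,\beta}^{\gamma/2}f_j$ is Cauchy in $L^{p(\cdot)}$, and its limit $g$ satisfies $f=\mathcal L_{\alpha,\beta}^{-\gamma/2}g$ by continuity of $\mathcal L_{\alpha,\beta}^{-\gamma/2}$ and Theorem~\ref{ThZ}, whence $f\in H_{\alpha,\beta}^{\gamma/2,p(\cdot)}$ with $\|f\|_{H}\le C\|f\|_{T}$. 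The main obstacle is the upper Littlewood--Paley bound: obtaining the pointwise size and smoothness estimates for the vector-valued kernel of $t^{k-\gamma}\partial_t^kP_t^{\alpha,\beta}\mathcal L_{\alpha,\beta}^{-\gamma/2}$ --- in particular controlling the $k$-fold $t$-derivative of the Jacobi--Poisson kernel near the diagonal and near the endpoints $\theta,\varphi\in\{0,\pi\}$, together with its composition with the fractional integral --- is the technically delicate point on which the whole extrapolation argument rests.
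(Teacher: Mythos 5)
Your overall skeleton coincides with the paper's: your $G_\psi$ with $\psi(s)=s^{k-\gamma}e^{-s}$ is exactly (up to a unimodular factor) the paper's fractional square function $g_{\alpha,\beta}^{k-\gamma}$ built from Segovia--Wheeden derivatives, your scaling constant $2^{-2(k-\gamma)}\Gamma(2(k-\gamma))$ matches the polarization identity \eqref{eq:G2}, the upper bound via weighted vector-valued Calder\'on--Zygmund theory plus extrapolation through \cite{CFMP} is Proposition~\ref{PropG1} and Corollary~\ref{Cor:G2}, and the lower bound via polarization, H\"older and the norm conjugate formula is Corollary~\ref{Cor:G3}. So the two-sided estimate for the auxiliary square function and the inclusion $H_{\alpha,\beta}^{\gamma/2,p(\cdot)}(0,\pi)\subseteq T_{\alpha,\beta}^{\gamma,k,p(\cdot)}(0,\pi)$ follow the paper's route, modulo two points you defer: the kernel estimates (which the paper carries out using \cite{NoSj2}), and the fact that the identity $g_{\alpha,\beta}^{\gamma,k}(\mathcal{L}_{\alpha,\beta}^{-\gamma/2}g)=G_\psi g$ is justified termwise only for $g\in S_{\alpha,\beta}$; extending it, and the resulting norm inequalities, to arbitrary $g\in L^{p(\cdot)}(0,\pi)$ is precisely the truncation/a.e.-subsequence/Fatou argument that occupies the first half of the paper's proof of Theorem~\ref{Th2}.

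The genuine gap is in the converse inclusion $T_{\alpha,\beta}^{\gamma,k,p(\cdot)}(0,\pi)\subseteq H_{\alpha,\beta}^{\gamma/2,p(\cdot)}(0,\pi)$. You approximate $f\in T_{\alpha,\beta}^{\gamma,k,p(\cdot)}(0,\pi)$ by $f_j\in S_{\alpha,\beta}$, invoking density of $S_{\alpha,\beta}$ ``in both spaces, as in assertion (a)''. But assertion (a) gives density of $S_{\alpha,\beta}$ only in $W_{\alpha,\beta}^{k,p(\cdot)}(0,\pi)$ and in the potential spaces (Proposition~\ref{propS3}, Corollary~\ref{Cor:M6}); nothing available before the theorem is proved gives density of $S_{\alpha,\beta}$ in the $T$-norm. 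Indeed, that density is essentially equivalent to the identity $T=H$ you are trying to establish (it follows a posteriori from the theorem together with Corollary~\ref{Cor:M6}), so using it here is circular, and without it your Cauchy sequence $g_j=\mathcal{L}_{\alpha,\beta}^{\gamma/2}f_j$ never gets started: the lower bound controls $\|g_i-g_j\|_{L^{p(\cdot)}(0,\pi)}$ by $\|f_i-f_j\|_{T_{\alpha,\beta}^{\gamma,k,p(\cdot)}(0,\pi)}$, which you cannot make small. The paper circumvents exactly this obstacle by a compactness argument instead of a density argument: given $f\in T_{\alpha,\beta}^{\gamma,k,p(\cdot)}(0,\pi)$ it regularizes with the Poisson semigroup, $f_\delta=P_\delta^{\alpha,\beta}f$, shows each $f_\delta$ lies in $H_{\alpha,\beta}^{\gamma/2,p(\cdot)}(0,\pi)$ with $f_\delta=\mathcal{L}_{\alpha,\beta}^{-\gamma/2}F_\delta$, proves the uniform bound $\|F_\delta\|_{L^{p(\cdot)}(0,\pi)}\leq C\|g_{\alpha,\beta}^{\gamma,k}(f)\|_{L^{p(\cdot)}(0,\pi)}$ (using an auxiliary index $\ell$ with $2(\ell-\gamma)>1$, the comparison of $g_{\alpha,\beta}^{\gamma,\ell}$ with $g_{\alpha,\beta}^{\gamma,k}$, and the pointwise monotonicity $g_{\alpha,\beta}^{\gamma,\ell}(f_\delta)\leq g_{\alpha,\beta}^{\gamma,\ell}(f)$), and then extracts a weak$^*$ limit $F$ by Banach--Alaoglu with $f=\mathcal{L}_{\alpha,\beta}^{-\gamma/2}F$. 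Some device of this kind is needed to repair your last step.
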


Note that from Theorem~\ref{Th2} we deduce that the space $T_{\alpha, \beta}^{\gamma,k,p(\cdot)}(0,\pi)$ does not depend
on $k \in \N$ provided that $0<\gamma<k$. The result in Theorem~\ref{Th2} is new even when $p \in \mathcal{P}(0,\pi)$
is constant and it gives a new characterization of the Jacobi Sobolev spaces introduced in \cite{La}.

In order to prove Theorem~\ref{Th2} we need to show that certain square function
related to $g_{\alpha,\beta}^{\gamma,k}$, which involves fractional derivatives, is bounded on $L^{p(\cdot)}(0,\pi)$
. In \cite{SW} fractional derivatives were introduced. Suppose that $\gamma>0$ and
$F$ is a nice enough function defined in $(0,\pi) \times (0,\infty)$. The $\gamma$-th derivative $\partial_t^\gamma F$
is defined by
$$\partial_t^\gamma F(\theta,t)
    = \frac{e^{-i(m-\gamma)\pi}}{\Gamma(m-\gamma)} \int_0^\infty \partial_t^m F(\theta,t+s) s^{m-\gamma-1} ds,
    \quad \theta \in (0,\pi), \ t> 0,$$
where $m \in \N$ is such that $m-1 \leq \gamma < m$.

We consider the Littlewood-Paley function $g_{\alpha, \beta}^\gamma$ given by
$$g_{\alpha,\beta}^{\gamma}(f)(\theta)
    = \left( \int_0^\infty \Big| t^{\gamma} \partial_t^\gamma P_t^{\alpha, \beta} (f) (\theta) \Big|^2 \frac{dt}{t} \right)^{1/2}, \quad \theta \in (0,\pi).$$
The key relation between $g_{\alpha,\beta}^{\gamma,k}$ and $g_{\alpha,\beta}^{\gamma}$, $0<\gamma<k$, which allows to connect the spaces
$H_{\alpha,\beta}^{\gamma/2,p(\cdot)}(0,\pi)$ and $T_{\alpha, \beta}^{\gamma,k,p(\cdot)}(0,\pi)$, is the following
$$g_{\alpha,\beta}^{k-\gamma}(f)
    = g_{\alpha,\beta}^{\gamma,k}(\mathcal{L}_{\alpha,\beta}^{-\gamma/2}f), \quad f \in S_{\alpha, \beta}.$$

In \cite{KPX} Kyriazis, Petrushev and Xu defined Besov and Triebel-Lizorkin spaces associated with Jacobi expansions
with respect to $\big((-1,1),(1-x)^\alpha (1+x)^\beta dx\big)$. We now adapt the Triebel-Lizorkin definitions given in \cite{KPX}
to our Jacobi expansions in $\big( (0,\pi),d\theta \big)$. We take a function $\mathfrak{a} \in C_c^\infty(0,\infty)$ such that
$\supp \mathfrak{a} \subseteq [1/2,2]$ and $\inf_{t \in [3/5,5/3]} |\mathfrak{a}(t)|>0$. The following construction is independent of the election of  $\mathfrak{a}$ and, as it is said in  \cite{KPX}, we can add the condition that  $\mathfrak{a}(t)+ \mathfrak{a}(2t)=1$ for $t\in[1/2,1]$.  We define the sequence $\{\Phi_j^{\alpha,\beta}\}_{j \in \N}$ of functions
on $(0,\pi)^2$ as follows,
$$\Phi_0^{\alpha,\beta}(\theta,\varphi)
    = \phi_0^{\alpha,\beta}(\theta) \phi_0^{\alpha,\beta}(\varphi), \quad \theta, \varphi \in (0,\pi),$$
and, for every $j \in \N$, $j \geq 1$,
$$\Phi_j^{\alpha,\beta}(\theta,\varphi)
    = \sum_{n=0}^\infty \mathfrak{a} \Big( \frac{\lambda_n^{\alpha,\beta}}{2^{j-1}} \Big) \phi_n^{\alpha,\beta}(\theta) \phi_n^{\alpha,\beta}(\varphi), \quad \theta, \varphi \in (0,\pi).$$
If $\gamma \in \R$ and $0<p,q<\infty$, a function $f \in L^1(0,\pi)$ is in the Jacobi-Triebel-Lizorkin space
$F_{\alpha,\beta}^{\gamma,q,p}(0,\pi)$ provided that
$$\|f\|_{F_{\alpha,\beta}^{\gamma,q,p}(0,\pi)}
    = \Big\| \Big( \sum_{j=0}^\infty \big( 2^{j\gamma} \big| \Phi_j^{\alpha,\beta}(f)(\cdot) \big| \big)^q \Big)^{1/q} \Big\|_{L^p(0,\pi)}
    < \infty.$$
Here, for every $j \in \N$,
$$\Phi_j^{\alpha,\beta}(f)(\theta)
    = \int_0^\pi \Phi_j^{\alpha,\beta}(\theta,\varphi) f(\varphi) d \varphi, \quad \theta \in (0,\pi).$$
It would be interesting to investigate Jacobi-Triebel-Lizorkin spaces with variable exponent in the
$\big((-1,1),(1-x)^\alpha (1+x)^\beta dx\big)$ and $\big( (0,\pi),d\theta \big)$ settings.
This question will be considered on its whole generality in a forthcoming paper. Here we only introduce Jacobi-Triebel-Lizorkin
spaces with $\gamma>0$, $q=2$ and variable exponent $p(\cdot)$. Assume that $p \in \mathcal{P}(0,\pi)$.
A function $f \in L^{p(\cdot)}(0,\pi)$ is in $F_{\alpha,\beta}^{\gamma,2,p(\cdot)}(0,\pi)$ when
$$\|f\|_{F_{\alpha,\beta}^{\gamma,2,p(\cdot)}(0,\pi)}
    = \Big\| \Big( \sum_{j=0}^\infty \big( 2^{j\gamma} \big| \Phi_j^{\alpha,\beta}(f)(\cdot) \big| \big)^2 \Big)^{1/2} \Big\|_{L^{p(\cdot)}(0,\pi)}
    < \infty.$$
In the following theorem we identify the variable exponent Jacobi-Triebel-Lizorkin space $F_{\alpha,\beta}^{\gamma,2,p(\cdot)}(0,\pi)$
with the potential space $H_{\alpha,\beta}^{\gamma,p(\cdot)}(0,\pi)$.

\begin{Th}\label{Th3}
    Let $\alpha,\beta \geq -1/2$ and $\gamma>0$. Assume that $p \in \mathcal{B}(0,\pi)$. Then,
    $H_{\alpha,\beta}^{\gamma,p(\cdot)}(0,\pi) = F_{\alpha,\beta}^{\gamma,2,p(\cdot)}(0,\pi) $. Moreover, the norms
    $\| \cdot \|_{H_{\alpha,\beta}^{\gamma,p(\cdot)}(0,\pi)}$ and $\| \cdot \|_{F_{\alpha,\beta}^{\gamma,2,p(\cdot)}(0,\pi)}$ are equivalent.
\end{Th}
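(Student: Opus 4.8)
The plan is to realize the Triebel--Lizorkin norm as a vector-valued Littlewood--Paley square function adapted to the spectral decomposition $\{\Phi_j^{\alpha,\beta}\}_{j\in\N}$, and then to transfer boundedness from weighted $L^r$-spaces to $L^{p(\cdot)}(0,\pi)$ through the extrapolation principle of \cite{CFMP}. By Theorem~\ref{ThZ}, a function $f$ lies in $H_{\alpha,\beta}^{\gamma,p(\cdot)}(0,\pi)$ precisely when $f=\mathcal{L}_{\alpha,\beta}^{-\gamma}g$ for a (unique) $g\in L^{p(\cdot)}(0,\pi)$ with $\|f\|_{H_{\alpha,\beta}^{\gamma,p(\cdot)}(0,\pi)}=\|g\|_{L^{p(\cdot)}(0,\pi)}$. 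For $j\geq 1$ we have $\Phi_j^{\alpha,\beta}(f)=\mathfrak{a}(\mathcal{L}_{\alpha,\beta}/2^{j-1})f$, so that $2^{j\gamma}\Phi_j^{\alpha,\beta}(f)=m_j(\mathcal{L}_{\alpha,\beta})g$, where $m_j(\lambda)=2^{j\gamma}\lambda^{-\gamma}\mathfrak{a}(\lambda/2^{j-1})$. Since $\supp\mathfrak{a}\subseteq[1/2,2]$, on the support of $m_j$ one has $\lambda\sim 2^{j}$ and hence $2^{j\gamma}\lambda^{-\gamma}\sim 1$; consequently $\{m_j\}_{j\geq 1}$ is a family of multipliers localized at dyadic scales and uniformly bounded together with all their scaled derivatives. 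The term $j=0$ reduces to $\Phi_0^{\alpha,\beta}(f)=c_0^{\alpha,\beta}(f)\phi_0^{\alpha,\beta}=\lambda_0^{\alpha,\beta}{}^{-\gamma}c_0^{\alpha,\beta}(g)\phi_0^{\alpha,\beta}$, a rank-one smoothing operator that absorbs the behaviour at the bottom of the spectrum.

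With this reduction, the identity $H_{\alpha,\beta}^{\gamma,p(\cdot)}(0,\pi)=F_{\alpha,\beta}^{\gamma,2,p(\cdot)}(0,\pi)$ amounts to the two-sided estimate
$$\Big\|\Big(\sum_{j=0}^\infty |m_j(\mathcal{L}_{\alpha,\beta})g|^2\Big)^{1/2}\Big\|_{L^{p(\cdot)}(0,\pi)}\approx \|g\|_{L^{p(\cdot)}(0,\pi)}.$$
For the upper bound I would prove $\|(\sum_j |m_j(\mathcal{L}_{\alpha,\beta})g|^2)^{1/2}\|_{L^{p(\cdot)}(0,\pi)}\lesssim\|g\|_{L^{p(\cdot)}(0,\pi)}$; following the strategy described after Theorem~\ref{Th1}, by the vector-valued extrapolation of \cite{CFMP} it suffices to establish the weighted bound $\|(\sum_j|m_j(\mathcal{L}_{\alpha,\beta})g|^2)^{1/2}\|_{L^r_\omega(0,\pi)}\lesssim\|g\|_{L^r_\omega(0,\pi)}$ for every $\omega\in A_r(0,\pi)$ and some fixed $1<r<\infty$. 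This is a weighted Littlewood--Paley estimate for $\mathcal{L}_{\alpha,\beta}$ applied to the dyadically localized, uniformly regular multipliers $m_j$, and follows from the Hörmander-type multiplier and square-function theory for $\mathcal{L}_{\alpha,\beta}$ together with the heat-kernel estimates already used in the paper.

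For the lower bound I would argue by duality together with a Calderón-type reproducing formula. Choosing an auxiliary $\eta\in C_c^\infty(0,\infty)$ supported near $[1/2,2]$ so that $\sum_{j\geq 1}\eta(\lambda/2^{j-1})\mathfrak{a}(\lambda/2^{j-1})=1$ on the spectrum above $\lambda_0^{\alpha,\beta}$, and setting $\psi_j(\lambda)=\lambda^{\gamma}2^{-j\gamma}\eta(\lambda/2^{j-1})$, one obtains $\psi_j(\lambda)m_j(\lambda)=\eta(\lambda/2^{j-1})\mathfrak{a}(\lambda/2^{j-1})$, so that, after adding the rank-one contribution of $j=0$, $\sum_{j\geq 0}\psi_j(\mathcal{L}_{\alpha,\beta})m_j(\mathcal{L}_{\alpha,\beta})=I$ on $S_{\alpha,\beta}$. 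Pairing with $h\in(L^{p(\cdot)}(0,\pi))'$ and using the Cauchy--Schwarz inequality in $\ell^2$ gives $|\langle g,h\rangle|\leq\|(\sum_j|m_j(\mathcal{L}_{\alpha,\beta})g|^2)^{1/2}\|_{L^{p(\cdot)}(0,\pi)}\,\|(\sum_j|\psi_j(\mathcal{L}_{\alpha,\beta})^* h|^2)^{1/2}\|_{(L^{p(\cdot)}(0,\pi))'}$. Since $p\in\mathcal{B}(0,\pi)$ forces $p'\in\mathcal{B}(0,\pi)$, the square function associated with the uniformly regular family $\{\psi_j\}$ is bounded on $(L^{p(\cdot)}(0,\pi))'$ by the same weighted and extrapolation argument, whence $\|g\|_{L^{p(\cdot)}(0,\pi)}=\sup_{\|h\|\leq 1}|\langle g,h\rangle|\lesssim\|(\sum_j|m_j(\mathcal{L}_{\alpha,\beta})g|^2)^{1/2}\|_{L^{p(\cdot)}(0,\pi)}$. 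A density argument on $S_{\alpha,\beta}$ (as in assertion (a) of the proof of Theorem~\ref{Th1}) then yields the full norm equivalence. The main obstacle is the uniform-in-$j$ weighted square-function estimate for the localized multipliers $m_j(\mathcal{L}_{\alpha,\beta})$: one must verify that these dyadic pieces satisfy Hörmander--Mikhlin conditions with constants independent of $j$ and control the resulting kernels against $A_r$ weights, paying particular attention to the non-doubling geometry near the endpoints $0$ and $\pi$ and to the behaviour at the lowest eigenvalue $\lambda_0^{\alpha,\beta}$.
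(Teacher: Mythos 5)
Your overall architecture coincides with the paper's: dyadic spectral localization $2^{j\gamma}\Phi_j^{\alpha,\beta}(f)=m_j(\mathcal{L}_{\alpha,\beta})g$, weighted $L^r_w$ estimates, and transfer to $L^{p(\cdot)}(0,\pi)$ via \cite[Theorem 1.3]{CFMP}. (Incidentally, the identification of $H_{\alpha,\beta}^{\gamma,p(\cdot)}(0,\pi)$ with the range of $\mathcal{L}_{\alpha,\beta}^{-\gamma}$ is the definition of the potential space, not Theorem~\ref{ThZ}.) Your inclusion $H_{\alpha,\beta}^{\gamma,p(\cdot)}(0,\pi)\subseteq F_{\alpha,\beta}^{\gamma,2,p(\cdot)}(0,\pi)$ is essentially the paper's first inclusion, which implements your multipliers through Lemma~\ref{Lem:6.1}. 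Your lower bound, however, takes a genuinely different route: the paper splits the dyadic indices into the four residue classes $\N_s$ modulo $4$ (so that the dilates of $\mathfrak{b}$ have disjoint supports within each class), uses the randomized multipliers $m_{\varepsilon,s}^{\ell}$ of Lemma~\ref{Lem:6.2} together with Khintchine's inequality to reverse the square-function estimate, and reconstructs $\sum_n(\lambda_n^{\alpha,\beta})^\gamma c_n^{\alpha,\beta}(f)\phi_n^{\alpha,\beta}$ with the correction multipliers $M_\ell$, $R_\ell$, $R$, as in \eqref{W4}. Your duality--plus--Calder\'on-reproducing-formula argument is viable and more transparent (it is the same mechanism the authors themselves use for Corollary~\ref{Cor:G3}); its price is that you must bound the dual square function on $L^{p'(\cdot)}(0,\pi)$, hence you need the nontrivial duality theorem that $p\in\mathcal{B}(0,\pi)$ implies $p'\in\mathcal{B}(0,\pi)$ (Diening; see \cite{DHHR}), which the paper's sign-randomization route avoids here.

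There are two genuine gaps. First, the square-function estimates themselves: you propose to verify ``H\"ormander--Mikhlin conditions with constants independent of $j$'' for the individual pieces $m_j$ (and $\psi_j$), but uniform boundedness of each $m_j(\mathcal{L}_{\alpha,\beta})$ separately only controls $\sup_j|m_j(\mathcal{L}_{\alpha,\beta})g|$, not the $\ell^2$-sum. The missing mechanism is to apply Proposition~\ref{propM1} to the \emph{randomized sums} $\sum_j\varepsilon_j m_j$ --- which satisfy Mikhlin bounds uniformly in the signs $\varepsilon_j$ precisely because the supports of the $m_j$ overlap at most finitely often --- and then invoke Khintchine's inequality; this is exactly what Lemmas~\ref{Lem:6.1} and \ref{Lem:6.2} encode, so the gap is fixable, but the plan as written omits the step that actually produces the $\ell^2$-valued bound. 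Second, and more seriously, your closing ``density argument on $S_{\alpha,\beta}$'' does not deliver the inclusion $F_{\alpha,\beta}^{\gamma,2,p(\cdot)}(0,\pi)\subseteq H_{\alpha,\beta}^{\gamma,p(\cdot)}(0,\pi)$: density of $S_{\alpha,\beta}$ in $F_{\alpha,\beta}^{\gamma,2,p(\cdot)}(0,\pi)$ is not known a priori (establishing it is essentially equivalent to the theorem), and an a priori inequality on a subspace that is merely dense in $L^{p(\cdot)}(0,\pi)$ does not by itself manufacture the function $g$ with $f=\mathcal{L}_{\alpha,\beta}^{-\gamma}g$ for a general $f\in F_{\alpha,\beta}^{\gamma,2,p(\cdot)}(0,\pi)$. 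What is needed instead is to apply your inequality to the spectral partial sums $\sum_{n=m}^{\ell}c_n^{\alpha,\beta}(f)\phi_n^{\alpha,\beta}$ of such an $f$, exploit that $\Phi_j^{\alpha,\beta}$ of a partial sum vanishes for $j$ outside a window determined by its spectral support (handling the boundary blocks, e.g.\ by the uniform partial-sum bounds of Proposition~\ref{Prop:B1}), conclude that $\sum_{n=m}^{\ell}(\lambda_n^{\alpha,\beta})^\gamma c_n^{\alpha,\beta}(f)\phi_n^{\alpha,\beta}$ is Cauchy in $L^{p(\cdot)}(0,\pi)$, and finish with Proposition~\ref{Prop:M5}; this is the content of the paper's estimate \eqref{W5} and the argument following it.
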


Note that as a special case of Theorem~\ref{Th3} we establish that the Jacobi potential space $H_{\alpha,\beta}^{\gamma,p}(0,\pi)$
considered by Langowski (\cite{La}) coincides with the Jacobi-Triebel-Lizorkin space
$F_{\alpha,\beta}^{\gamma,2,p}(0,\pi)$, for every $1<p<\infty$.

The paper is organized as follows. In Sections \ref{sec:2}, \ref{sec:3} and \ref{sec:4} we prove  that assertions (a), (b), (c) and (d) are true. Theorems \ref{ThZ}, \ref{Th2} and \ref{Th3}
are proved in Sections \ref{sec:7}, \ref{sec:5} and \ref{sec:6}, respectively.

Throughout this paper by $C$ and $c$ we always denote positive constants that can change in each occurrence.

%\newpage
%%%%%%%%%%%%%%%%%%%%%%%%%%%%%%%%%%%%%%%%%%%%%%%%%%%%%%%%%%%%%%%%%%%%%%%%%
\section{Dense subspaces} \label{sec:2}
%%%%%%%%%%%%%%%%%%%%%%%%%%%%%%%%%%%%%%%%%%%%%%%%%%%%%%%%%%%%%%%%%%%%%%%%%
This section deals with the proof of the  $W_{\alpha,\beta}^{k,p(\cdot)}$-density of  $S_{\alpha ,\beta}$ claimed in assertion (a) of Section \ref{sec:intro}.

Assume that $p\in{\mathcal P}(0,\pi)$.
According to \cite[Theorem 3.4.6]{DHHR} the space $L^{p'(\cdot)}(0,\pi)$ is isomorphic to
the dual space $(L^{p(\cdot)}(0,\pi))^*$ of $L^{p(\cdot)}(0,\pi)$. On the other hand, for every $k\in\Bbb N$,
${\phi}_k^{\alpha ,\beta}\in L^{\infty}(0,\pi)$. Then, ${\phi}_k^{\alpha ,\beta}\in L^{p'(\cdot)}(0,\pi)$,
$k\in\Bbb N$ (\cite[Theorem 3.3.11]{DHHR}). We define, for every $f\in L^{p(\cdot)}(0,\pi)$ and $k\in\Bbb N$,
$$c_k^{\alpha ,\beta}(f)=\int_0^{\pi}{{\phi}_k^{\alpha ,\beta}(\theta) f(\theta) d\theta}.$$
By \cite[Theorem 3.4.12]{DHHR} the space $C_c^{\infty}(0,\pi)$ is dense in $L^{p(\cdot)}(0,\pi)$.

\begin{Prop}\label{propS1}
    Let $\alpha,\beta\geq-1/2$ and $p\in{\mathcal P}(0,\pi)$.
    The space $S_{\alpha ,\beta}=\mathrm{span}\{{\phi}_k^{\alpha ,\beta}\}_{k\in\Bbb N}$ is dense in $L^{p(\cdot)}(0,\pi)$.
\end{Prop}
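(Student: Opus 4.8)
The plan is to establish density of $S_{\alpha,\beta}$ in $L^{p(\cdot)}(0,\pi)$ by a standard duality argument, exploiting that $L^{p'(\cdot)}(0,\pi)$ realizes the dual of $L^{p(\cdot)}(0,\pi)$. Since $S_{\alpha,\beta}$ is a linear subspace, by the Hahn-Banach theorem it suffices to show that any continuous linear functional on $L^{p(\cdot)}(0,\pi)$ that vanishes on $S_{\alpha,\beta}$ must vanish identically. By \cite[Theorem 3.4.6]{DHHR}, such a functional is represented by some $g\in L^{p'(\cdot)}(0,\pi)$, so that annihilation of $S_{\alpha,\beta}$ reads
$$\int_0^\pi \phi_k^{\alpha,\beta}(\theta)\,g(\theta)\,d\theta = 0, \quad \text{for every } k\in\Bbb N.$$
In other words, $c_k^{\alpha,\beta}(g)=0$ for all $k$. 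The goal is then to conclude $g=0$ almost everywhere.

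First I would check that $g$ genuinely lies in $L^2(0,\pi)$, so that the completeness of the orthonormal system $\{\phi_n^{\alpha,\beta}\}_{n\in\Bbb N}$ can be invoked. This is where the hypotheses $p\in\mathcal P(0,\pi)$ and the boundedness of the interval $(0,\pi)$ enter: since $p_+<\infty$, one has the continuous embedding $L^{p'(\cdot)}(0,\pi)\hookrightarrow L^{r}(0,\pi)$ for a suitable fixed exponent $r>1$ (indeed $p'_-\ge (p_+)' >1$), and one can arrange the comparison so that $r\ge 2$ fails in general; more carefully, because $(0,\pi)$ has finite measure, $L^{p'(\cdot)}(0,\pi)$ embeds into $L^{s}(0,\pi)$ for every $s\le p'_-$. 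The delicate point is thus whether $p'_-\ge 2$. In general this need not hold, so a direct $L^2$-membership of $g$ is not automatic; instead I would test against the bounded functions $\phi_k^{\alpha,\beta}\in L^\infty(0,\pi)$, which already belong to $L^{p(\cdot)}(0,\pi)$, and use that $g\in L^1(0,\pi)$ (again from finite measure and $p'_-\ge 1$) so that all the pairings $c_k^{\alpha,\beta}(g)$ are well defined.

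To turn the vanishing of all Jacobi coefficients of $g\in L^1(0,\pi)$ into $g=0$, I would pass through the heat semigroup. For fixed $t>0$ the kernel $W_t^{\alpha,\beta}(\theta,\varphi)=\sum_{n} e^{-t\lambda_n^{\alpha,\beta}}\phi_n^{\alpha,\beta}(\theta)\phi_n^{\alpha,\beta}(\varphi)$ converges rapidly and is smooth and bounded on $(0,\pi)^2$, so $W_t^{\alpha,\beta}g$ is well defined for $g\in L^1(0,\pi)$ and may be expanded termwise:
$$W_t^{\alpha,\beta}g(\theta)=\sum_{n=0}^\infty e^{-t\lambda_n^{\alpha,\beta}}\,c_n^{\alpha,\beta}(g)\,\phi_n^{\alpha,\beta}(\theta)=0,\quad \theta\in(0,\pi),$$
since every coefficient $c_n^{\alpha,\beta}(g)$ vanishes. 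Letting $t\to 0^+$ and using that $W_t^{\alpha,\beta}$ is an approximation of the identity on $L^1(0,\pi)$ (or, alternatively, that $W_t^{\alpha,\beta}g\to g$ in $L^1$ as $t\to 0^+$), I conclude $g=0$ in $L^1(0,\pi)$, hence as an element of $L^{p'(\cdot)}(0,\pi)$.

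The main obstacle, and the step requiring the most care, is the termwise justification in the semigroup argument: one must ensure that the expansion of $W_t^{\alpha,\beta}g$ may be integrated against $g$ term by term, which rests on uniform bounds for $\phi_n^{\alpha,\beta}$ and the exponential decay $e^{-t\lambda_n^{\alpha,\beta}}$ dominating any polynomial growth of the $L^\infty$-norms $\|\phi_n^{\alpha,\beta}\|_\infty$. These bounds are available from standard Jacobi polynomial estimates (as in \cite{Sz}), and the rapid decay of $e^{-t\lambda_n^{\alpha,\beta}}$ for each fixed $t>0$ makes the interchange of sum and integral routine once those estimates are recorded.
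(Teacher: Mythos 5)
Your argument is correct in substance, but it takes a genuinely different route from the paper's. The paper proves the proposition directly: since $C_c^\infty(0,\pi)$ is dense in $L^{p(\cdot)}(0,\pi)$ (\cite[Theorem 3.4.12]{DHHR}), it fixes $g\in C_c^\infty(0,\pi)$, uses integration by parts to show that $|c_k^{\alpha,\beta}(g)|\leq C_m(k+1)^{-m}$ for every $m$, concludes that the partial sums $\sum_{k=0}^n c_k^{\alpha,\beta}(g)\phi_k^{\alpha,\beta}$ converge to $g$ uniformly on $(0,\pi)$, and then upgrades uniform convergence to $L^{p(\cdot)}$-convergence via the embedding $L^\infty(0,\pi)\hookrightarrow L^{p(\cdot)}(0,\pi)$ (\cite[Theorem 3.3.11]{DHHR}). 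You instead dualize: by Hahn--Banach and \cite[Theorem 3.4.6]{DHHR}, density follows once one shows that $g\in L^{p'(\cdot)}(0,\pi)\subseteq L^1(0,\pi)$ with $c_k^{\alpha,\beta}(g)=0$ for all $k$ must vanish, which you get from $W_t^{\alpha,\beta}g\equiv 0$ together with $W_t^{\alpha,\beta}g\to g$ in $L^1(0,\pi)$. Note that this inverts the paper's logical order: the paper deduces precisely this uniqueness statement (Corollary \ref{corS2}) \emph{from} Proposition \ref{propS1} by duality, whereas you prove a stronger $L^1$-uniqueness theorem first and then read off density. Your route buys more (uniqueness of Jacobi coefficients already on $L^1$, hence on every $L^{p(\cdot)}$), at the cost of heavier machinery; the paper's argument is more elementary and self-contained.

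Two points in your write-up deserve attention. First, the paragraph discussing whether $g\in L^2(0,\pi)$ is a detour you ultimately abandon; since your final argument only uses $g\in L^1(0,\pi)$, that discussion can simply be removed. Second, the claim that $W_t^{\alpha,\beta}g\to g$ in $L^1(0,\pi)$ as $t\to 0^+$ is the one step that genuinely requires proof, and you only assert it. It does hold, with tools already present in the paper: the Gaussian bound \eqref{eqS2} (from \cite[Theorem A, and (3)]{NoSj3}) yields $\sup_{t>0}\|W_t^{\alpha,\beta}\|_{L^1(0,\pi)\to L^1(0,\pi)}<\infty$, strong continuity at $t=0^+$ holds on $L^2(0,\pi)$ by the spectral theorem, and $L^2(0,\pi)$ is dense in $L^1(0,\pi)$, so a three-epsilon argument concludes. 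Without some such justification your proof would have a gap, because a Gaussian upper bound on the kernel does not by itself make $\{W_t^{\alpha,\beta}\}_{t>0}$ an approximation of the identity; with it, the proof is complete.
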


\begin{proof}
    Since $C_c^{\infty}(0,\pi)$ is a dense subspace of $L^{p(\cdot)}(0,\pi)$, it is sufficient to see that
    $C_c^{\infty}(0,\pi)$ is contained in the closure of $S_{\alpha ,\beta}$ in $L^{p(\cdot)}(0,\pi)$.
    Let $g\in C_c^{\infty}(0,\pi)$. By using integration by parts we deduce that, for every $m\in\Bbb N$,
    there exists $C_m>0$ such that  $|c_k^{\alpha ,\beta}(g)|<C_m(k+1)^{-m}$, $k\in\Bbb N$. Hence,
    $$S_n^{\alpha ,\beta}(g)
        =\sum_{k=0}^n{c_k^{\alpha ,\beta}(g){\phi}_k^{\alpha ,\beta}}\ \longrightarrow\ g\ ,\ \ \mbox{as}\ n\rightarrow\infty,
        \quad \text{in } L^{\infty}(0,\pi). $$
    Hence, according to \cite[Theorem 3.3.11]{DHHR},
    $S_n^{\alpha ,\beta}(\phi)\rightarrow\phi$, as $n\rightarrow\infty$, in $L^{p(\cdot)}(0,\pi)$.
\end{proof}

\begin{Cor}\label{corS2}
    Let $\alpha,\beta\geq -1/2$ and $p\in{\mathcal P}(0,\pi)$.
    If $f\in L^{p(\cdot)}(0,\pi)$ and $c_k^{\alpha ,\beta}(f)=0,\ k\in\Bbb N$, then $f=0$.
\end{Cor}

\begin{proof}
    Since $p\in{\mathcal P}(0,\pi)$, $p'$ is also in ${\mathcal P}(0,\pi)$.
    Then, by Proposition \ref{propS1}, $S_{\alpha ,\beta}$ is dense in $L^{p'(\cdot)}(0,\pi)$.
    Assume that $f\in L^{p(\cdot)}(0,\pi)$ is such that $c_k^{\alpha ,\beta}(f)=0,\ k\in\Bbb N$.
    The norm conjugate formula (\cite[Corollary 3.2.14]{DHHR}) leads to
    $$\int_0^{\pi}{f(\theta)g(\theta)d\theta}
        =0,$$
    for every $g\in L^{p'(\cdot)}(0,\pi)$. By using again the norm conjugate formula (duality) we conclude that $f=0$.
\end{proof}

We can improve the result in Proposition~\ref{propS1} when the function $p(\cdot)$ satisfies additional conditions.
According to \cite[Theorem 1]{Mu6}, if $1<p<\infty$ and $f \in L^p(0,\pi)$, then
$$f
    = \lim_{n \to \infty} \sum_{k=0}^n c_k^{\alpha,\beta}(f) \phi_k^{\alpha,\beta},$$
where the convergence is understood in $L^p(0,\pi)$. We now establish this property in
$L^p_w(0,\pi)$, $1<p<\infty$ and $w \in A_p(0,\pi)$, and in $L^{p(\cdot)}(0,\pi)$
when the function $p(\cdot)$ is as in \cite[Theorem 1.3]{CFMP}.

\begin{Prop}\label{Prop:B1}
    Let $\alpha,\beta \geq -1/2$.
    \begin{itemize}
        \item[$(i)$] If $1<p<\infty$ and $w \in A_p(0,\pi)$, there exists $C>0$ such that, for every $n \in \N$,
        $$\Big\| \sum_{k=0}^n c_k^{\alpha,\beta}(f) \phi_k^{\alpha,\beta} \Big\|_{L^p_w(0,\pi)}
            \leq C \|f\|_{L^p_w(0,\pi)}, \quad f \in L^p_w(0,\pi),$$
        and
        $$\lim_{n \to \infty} \sum_{k=0}^n c_k^{\alpha,\beta}(f) \phi_k^{\alpha,\beta}
            = f, \quad f \in L^p_w(0,\pi),$$
        in the sense of convergence in $L^p_w(0,\pi)$.

        \item[$(ii)$] Assume that $p \in \mathcal{B}(0,\pi)$. Then, there exists $C>0$ such that, for every $n \in \N$,
        $$\Big\| \sum_{k=0}^n c_k^{\alpha,\beta}(f) \phi_k^{\alpha,\beta} \Big\|_{L^{p(\cdot)}(0,\pi)}
            \leq C \|f\|_{L^{p(\cdot)}(0,\pi)}, \quad f \in L^{p(\cdot)}(0,\pi),$$
        and
        $$\lim_{n \to \infty} \sum_{k=0}^n c_k^{\alpha,\beta}(f) \phi_k^{\alpha,\beta}
            = f, \quad f \in L^{p(\cdot)}(0,\pi),$$
        in the sense of convergence in $L^{p(\cdot)}(0,\pi)$.
    \end{itemize}
\end{Prop}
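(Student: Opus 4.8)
The plan is to obtain part $(i)$ first, as a uniform weighted estimate, and then to derive the convergence in $(i)$ together with the whole of $(ii)$ from it by soft arguments. Write $S_n^{\alpha,\beta}f=\sum_{k=0}^n c_k^{\alpha,\beta}(f)\phi_k^{\alpha,\beta}$; this is an integral operator with kernel $\mathcal{K}_n(\theta,\varphi)=\sum_{k=0}^n\phi_k^{\alpha,\beta}(\theta)\phi_k^{\alpha,\beta}(\varphi)$. The central step is to prove
$$\sup_{n\in\N}\|S_n^{\alpha,\beta}f\|_{L^p_w(0,\pi)}\leq C\|f\|_{L^p_w(0,\pi)},\qquad f\in L^p_w(0,\pi),$$
with $C$ depending only on $p$ and on the $A_p$-constant of $w$. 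To this end I would invoke the Christoffel--Darboux formula for the orthonormal system $\{\phi_k^{\alpha,\beta}\}$, which collapses the sum defining $\mathcal{K}_n$ into
$$\mathcal{K}_n(\theta,\varphi)=a_n\,\frac{\phi_{n+1}^{\alpha,\beta}(\theta)\phi_n^{\alpha,\beta}(\varphi)-\phi_n^{\alpha,\beta}(\theta)\phi_{n+1}^{\alpha,\beta}(\varphi)}{\cos\theta-\cos\varphi},$$
where $a_n$ is the quotient of consecutive leading coefficients and stays bounded in $n$. Consequently $S_n^{\alpha,\beta}f=a_n\big(\phi_{n+1}^{\alpha,\beta}\,H(\phi_n^{\alpha,\beta}f)-\phi_n^{\alpha,\beta}\,H(\phi_{n+1}^{\alpha,\beta}f)\big)$, where $H$ denotes the singular integral operator with kernel $(\cos\theta-\cos\varphi)^{-1}$.

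The main obstacle is to bound these pieces uniformly in $n$. Since $\cos\theta-\cos\varphi=-2\sin\frac{\theta+\varphi}{2}\sin\frac{\theta-\varphi}{2}$, the operator $H$ is, modulo a bounded and a smooth factor, a Hilbert transform on $(0,\pi)$, so its weighted $L^p$ boundedness for $A_p$ weights is the Hunt--Muckenhoupt--Wheeden theorem. The difficulty is that the multiplications by $\phi_n^{\alpha,\beta}$ and $\phi_{n+1}^{\alpha,\beta}$ must be absorbed, together with $w$, into a single $A_p$ weight whose characteristic is controlled uniformly in $n$. This is exactly the Pollard/Muckenhoupt strategy and requires the global pointwise bounds for $\phi_n^{\alpha,\beta}$ (the Jacobi function asymptotics, with the correct powers of $\sin\frac{\theta}{2}$ and $\cos\frac{\theta}{2}$ near the endpoints), split into the region near the diagonal, where the Hilbert transform controls the singularity, and the region away from it, where size estimates suffice. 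Carrying out these estimates uniformly in $n$ is where the bulk of the technical work lies; the unweighted case is \cite[Theorem 1]{Mu6}, and the weighted refinement is the new ingredient.

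Granting the uniform bound, the convergence in $(i)$ follows by a standard density argument. Since $(0,\pi)$ is bounded and $w\in A_p(0,\pi)$, one has $w\in L^1(0,\pi)$; hence the uniform $L^\infty$-convergence $S_n^{\alpha,\beta}g\to g$ for $g\in C_c^\infty(0,\pi)$ (established in the proof of Proposition~\ref{propS1} through the rapid decay of $c_k^{\alpha,\beta}(g)$) upgrades to convergence in $L^p_w(0,\pi)$. Combining this with the density of $C_c^\infty(0,\pi)$ in $L^p_w(0,\pi)$ and the uniform bound, an $\varepsilon/3$ argument yields $S_n^{\alpha,\beta}f\to f$ in $L^p_w(0,\pi)$ for every $f$. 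For part $(ii)$, the uniform weighted bounds from $(i)$ feed directly into the extrapolation theorem of \cite{CFMP} (in the form recalled in Section~\ref{sec:intro}, valid for $p\in\mathcal{B}(0,\pi)$), giving $\sup_{n}\|S_n^{\alpha,\beta}\|_{L^{p(\cdot)}(0,\pi)\to L^{p(\cdot)}(0,\pi)}<\infty$. Finally, the convergence in $L^{p(\cdot)}(0,\pi)$ is obtained by repeating the density argument: $C_c^\infty(0,\pi)$ is dense in $L^{p(\cdot)}(0,\pi)$ by \cite[Theorem 3.4.12]{DHHR}, and $L^\infty$-convergence on $(0,\pi)$ implies $L^{p(\cdot)}$-convergence by \cite[Theorem 3.3.11]{DHHR}, so the uniform bound closes the argument.
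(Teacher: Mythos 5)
Your overall architecture coincides with the paper's: prove a uniform $L^p_w$ bound for the partial sums $S_n^{\alpha,\beta}$, upgrade it to $L^p_w$-convergence via density of $C_c^\infty(0,\pi)$ and the rapid decay of the Jacobi coefficients of smooth functions, then pass to $L^{p(\cdot)}(0,\pi)$ by the extrapolation theorem of \cite{CFMP} and repeat the density argument. Those soft steps are correct as you state them and agree with the paper's proof. The gap is in the core step, the uniform weighted bound. Writing the Christoffel--Darboux kernel with denominator $\cos\theta-\cos\varphi=-2\sin\frac{\theta+\varphi}{2}\sin\frac{\theta-\varphi}{2}$, you claim that the factor $1/\sin\frac{\theta+\varphi}{2}$ is ``bounded and smooth'', so that your operator $H$ is essentially a Hilbert transform and Hunt--Muckenhoupt--Wheeden applies. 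That is false: near the corners $(0,0)$ and $(\pi,\pi)$ of $(0,\pi)^2$, which touch the diagonal, one has $\cos\theta-\cos\varphi\approx\tfrac{1}{2}(\varphi-\theta)(\varphi+\theta)$ (and the symmetric expression in $\pi-\theta$, $\pi-\varphi$ at the other corner), so the kernel behaves like $\frac{1}{(\theta-\varphi)(\theta+\varphi)}$; this is not a Calder\'on--Zygmund kernel on $(0,\pi)$ and \cite{HMW} gives nothing for it. Moreover, you locate the remaining difficulty in absorbing the factors $\phi_n^{\alpha,\beta}$, $\phi_{n+1}^{\alpha,\beta}$ into the weight; but for $\alpha,\beta\geq-1/2$ these functions are uniformly bounded in $n$ and $\theta$ (e.g.\ for $\alpha=\beta=-1/2$ they are just cosines), so there is nothing to absorb --- the real obstruction is exactly the corner singularity your sketch dismisses, and no manipulation of weights can remove it.

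This is precisely why the paper does not use the raw Christoffel--Darboux formula. Following Kerman \cite[Theorem 2]{Ke}, i.e.\ Pollard's decomposition, it splits $S_n f$ into three pieces $J_\ell^{\alpha,\beta,n}f$, $\ell=1,2,3$, in which the dangerous kernel appears with an extra factor $\sin\varphi$ in the numerator. Since $\sin\varphi/\sin\frac{\theta+\varphi}{2}\leq 2$, the kernel $\frac{\sin\varphi}{\sin\frac{\theta+\varphi}{2}\sin\frac{\theta-\varphi}{2}}$ can then be written as $\frac{1}{\sin\frac{\theta-\varphi}{2}}+R(\theta,\varphi)$, where the first term is indeed handled by \cite{HMW}, but the remainder $R$ is only controlled by the Stieltjes-type kernels $\big(\sin\frac{\theta}{2}+\sin\frac{\varphi}{2}\big)^{-1}$ and $\big(\cos\frac{\theta}{2}+\cos\frac{\varphi}{2}\big)^{-1}$, whose $L^p_w$-boundedness is a separate ingredient (Kerman's Lemma 6) that your sketch never identifies; the piece $J_1^{\alpha,\beta,n}$ is elementary. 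So the ``bulk of the technical work'' you defer is not a routine weighted refinement of \cite{Mu6} along the lines you describe: as written, your reduction to the Hilbert transform fails at the corners, and the Pollard-type decomposition producing the Hilbert-plus-Stieltjes structure is the missing idea.
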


\begin{proof}[Proof of Proposition~\ref{Prop:B1}, $(i)$]
    In order to prove this property we proceed as in the proof of \cite[Theorem 2]{Ke}.
    Let $1<p<\infty$ and $w \in A_p(0,\pi)$. Suppose that $f \in L^p_w(0,\pi)$ and $n \in \N$.
    We define
    $$S_n f(\theta)
        = \sum_{k=0}^n c_k^{\alpha,\beta}(f) \phi_k^{\alpha,\beta}(\theta), \quad \theta \in (0,\pi).$$
    As in \cite[p. 13]{Ke} we have that
    \begin{equation}\label{eq:A1}
        |S_n f(\theta)|
            \leq C \sum_{\ell=1}^3 J_\ell^{\alpha,\beta,n} f (\theta), \quad \theta \in (0,\pi),
    \end{equation}
    where the operators $J_\ell^{\alpha,\beta,n}$, $\ell=1,2,3$ can be estimated as follows. Firstly, for $J_1^{\alpha,\beta,n}$ we get
    \begin{align*}
        J_1^{\alpha,\beta,n} f (\theta)
            \leq & C \frac{\big(\sin \frac{\theta}{2} \big)^{\alpha+1/2} \big(\cos \frac{\theta}{2} \big)^{\beta+1/2}}{\big(\sin \frac{\theta}{2}  + \frac{1}{n+1}\big)^{\alpha+1/2} \big(\cos \frac{\theta}{2} + \frac{1}{n+1}\big)^{\beta+1/2}} \\
                 & \times \int_0^\pi \frac{\big(\sin \frac{\varphi}{2} \big)^{\alpha+1/2} \big(\cos \frac{\varphi}{2} \big)^{\beta+1/2}}{\big(\sin \frac{\varphi}{2}  + \frac{1}{n+1}\big)^{\alpha+1/2} \big(\cos \frac{\varphi}{2} + \frac{1}{n+1}\big)^{\beta+1/2}} |f(\varphi)|  d \varphi \\
            \leq & C \int_0^\pi |f(\varphi)|  d \varphi, \quad \theta \in (0,\pi).
    \end{align*}
    Then, H\"older's inequality implies that
    \begin{equation}\label{eq:A2}
        \int_0^\pi |J_1^{\alpha,\beta,n} f (\theta)|^p w(\theta) d\theta
            \leq C \int_0^\pi |f(\theta)|^p w(\theta) d\theta,
    \end{equation}
    because $L^p_w(0,\pi) \subseteq L^1(0,\pi)$.

    For $J_2^{\alpha,\beta,n}$ the following estimate holds
    \begin{align*}
        J_2^{\alpha,\beta,n} f (\theta)
            & \leq C \frac{\big(\sin \frac{\theta}{2} \big)^{\alpha+1/2} \big(\cos \frac{\theta}{2} \big)^{\beta+1/2}}{\big(\sin \frac{\theta}{2}  + \frac{1}{n+1}\big)^{\alpha+1/2} \big(\cos \frac{\theta}{2} + \frac{1}{n+1}\big)^{\beta+1/2}} \\
            & \qquad \times \Big| \int_0^\pi \frac{\sin \varphi}{\sin \frac{\theta + \varphi}{2} \sin \frac{\theta - \varphi}{2}}
                                             \frac{\big(\sin \frac{\varphi}{2} \big)^{\alpha+3/2} \big(\cos \frac{\varphi}{2} \big)^{\beta+3/2}}{\big(\sin \frac{\varphi}{2}  + \frac{1}{n}\big)^{\alpha+3/2} \big(\cos \frac{\varphi}{2} + \frac{1}{n}\big)^{\beta+3/2}}
                                             b_n(\varphi) f(\varphi)  d \varphi \Big|,
    \end{align*}
    where $\sup_{k \in \N} |b_k(\varphi)| \leq C$, $\varphi \in (0,\pi)$. We can write (see \cite[p. 14]{Ke})
    $$\frac{\sin \varphi}{\sin \frac{\theta + \varphi}{2} \sin \frac{\theta - \varphi}{2}}
        = \frac{1}{\sin \frac{\theta - \varphi}{2}} + R(\theta,\varphi), \quad \theta, \varphi \in (0,\pi), \ \theta  \neq \varphi,$$
    being
    $$|R(\theta,\varphi)|
        \leq C \left\{ \begin{array}{ll}
                            \dfrac{1}{\sin \frac{\theta}{2} + \sin \frac{\varphi}{2}}, & 0<\theta<\pi/2 \\
                            & \\
                            \dfrac{1}{\cos \frac{\theta}{2} + \cos \frac{\varphi}{2}}, & \pi/2<\theta<\pi, \\
                       \end{array} \right. \quad \varphi \in (0,\pi).$$
    Thus, by defining
    $$g(\varphi)
        = \frac{\big(\sin \frac{\varphi}{2} \big)^{\alpha+3/2} \big(\cos \frac{\varphi}{2} \big)^{\beta+3/2}}{\big(\sin \frac{\varphi}{2}  + \frac{1}{n}\big)^{\alpha+3/2} \big(\cos \frac{\varphi}{2} + \frac{1}{n}\big)^{\beta+3/2}}
          b_n(\varphi) f(\varphi), \quad \varphi \in (0,\pi),$$
    we obtain
    \begin{equation}\label{eq:A3}
        J_2^{\alpha,\beta,n} f (\theta)
            \leq C \Big[ |(Hg)(\theta)| + S^1(|g|)(\theta) + S^2(|g|)(\theta)\Big], \quad \theta \in (0,\pi),
    \end{equation}
    where
    $$(Hg)(\theta)
        = \text{P.V.} \int_0^\pi \frac{g(\varphi)}{\sin \frac{\theta-\varphi}{2}} d\varphi, \quad \text{a.e. } \theta \in (0,\pi),$$
    $$(S^1g)(\theta)
        = \int_0^\pi \frac{g(\varphi)}{\sin \frac{\theta}{2} + \sin \frac{\varphi}{2}} d\varphi, \quad \theta \in (0,\pi),$$
    and
    $$(S^2g)(\theta)
        = \int_0^\pi \frac{g(\varphi)}{\cos \frac{\theta}{2} + \cos \frac{\varphi}{2}} d\varphi, \quad \theta \in (0,\pi).$$
    The operator $H$ is a singular integral operator related to the Hilbert transform and $S^j$, $j=1,2$, are Stieltjes type operators.
    It is well-known (\cite{HMW}) that $H$ is bounded from $L^p_w(0,\pi)$ into itself. In \cite[Lemma 6]{Ke} it was
    established that $S^1$ and $S^2$ are bounded from $L^p_w(0,\pi)$ into itself. Then, \eqref{eq:A3} implies that
    \begin{equation}\label{eq:A4}
        \int_0^\pi |J_2^{\alpha,\beta,n} f (\theta)|^p w(\theta) d\theta
            \leq C \int_0^\pi |g(\theta)|^p w(\theta) d\theta
            \leq C \int_0^\pi |f(\theta)|^p w(\theta) d\theta.
    \end{equation}
    In a similar way we can see
    \begin{equation}\label{eq:A5}
        \int_0^\pi |J_3^{\alpha,\beta,n} f (\theta)|^p w(\theta) d\theta
            \leq C \int_0^\pi |f(\theta)|^p w(\theta) d\theta.
    \end{equation}
    By putting together \eqref{eq:A1}, \eqref{eq:A2}, \eqref{eq:A4} and \eqref{eq:A5} we conclude that
    \begin{equation*}\label{eq:A6}
        \|S_n f\|_{L^p_w(0,\pi)}
            \leq C \| f \|_{L^p_w(0,\pi)}.
    \end{equation*}
    Note that the constant $C>0$ does not depend on $n \in \N$ and $f \in L^p_w(0,\pi)$.

    Since $C_c^\infty(0,\pi)$ is a dense subspace of $L^p_w(0,\pi)$ and for every $h \in C_c^\infty(0,\pi)$,
    $$\lim_{n \to \infty} S_n h = h,  \quad \text{uniformly in} (0,\pi),$$
    and hence in $L^p_w(0,\pi)$; standard arguments allow
    us to show that, for every $f \in L^p_w(0,\pi)$,
    $$\lim_{n \to \infty} S_n f
        = f, \quad \text{in } L^p_w(0,\pi).$$
\end{proof}

\begin{proof}[Proof of Proposition~\ref{Prop:B1}, $(ii)$]
    From the property established in Proposition~\ref{Prop:B1}, $(i)$, and according to \cite[Theorem 1.3]{CFMP}
    we deduce that there exists $C>0$ such that, for every $n \in \N$,
    \begin{equation}\label{eq:A8}
        \|S_n f\|_{L^{p(\cdot)}(0,\pi)}
            \leq C \| f \|_{L^{p(\cdot)}(0,\pi)}, \quad f \in L^{p(\cdot)}(0,\pi).
    \end{equation}
    By \cite[Theorem 3.3.1]{DHHR}, $C_c^\infty(0,\pi) \subseteq L^{p_+}(0,\pi) \subseteq L^{p(\cdot)}(0,\pi)$ and the
    inclusions are continuous. Hence, for every $h \in C_c^\infty(0,\pi)$,
    $$\lim_{n \to \infty} S_n(h) = h, \quad \text{in } L^{p(\cdot)}(0,\pi).$$
    Since $C_c^\infty(0,\pi)$
    is dense in $L^{p(\cdot)}(0,\pi)$ we deduce from \eqref{eq:A8} that, for every $f \in L^{p(\cdot)}(0,\pi)$,
    $$\lim_{n \to \infty} S_n f = f, \quad \text{in } L^{p(\cdot)}(0,\pi).$$
\end{proof}

We are going to see that $S_{\alpha,\beta}$ is a dense subspace of $W^{k,p(\cdot)}_{\alpha ,\beta}(0,\pi)$.

\begin{Prop}\label{propS3}
    Let $\alpha,\beta\geq -1/2$, $k\in\Bbb N$ and $p\in{\mathcal B}(0,\pi)$.
    Then, $S_{\alpha ,\beta}$ is a dense subspace of $W^{k,p(\cdot)}_{\alpha ,\beta}(0,\pi)$.
\end{Prop}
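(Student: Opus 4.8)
The plan is to approximate an arbitrary $f \in W_{\alpha,\beta}^{k,p(\cdot)}(0,\pi)$ by its Fourier--Jacobi partial sums
$$S_n f = \sum_{j=0}^n c_j^{\alpha,\beta}(f)\, \phi_j^{\alpha,\beta}, \quad n \in \N,$$
which evidently belong to $S_{\alpha,\beta}$, and to show that $S_n f \to f$ in the graph norm $\|\cdot\|_{W_{\alpha,\beta}^{k,p(\cdot)}(0,\pi)}$. Convergence of the zeroth-order term, $S_n f \to f$ in $L^{p(\cdot)}(0,\pi)$, is exactly Proposition~\ref{Prop:B1}, $(ii)$, since $p \in \mathcal{B}(0,\pi)$. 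It therefore remains to control each seminorm $\|\mathbb{D}_{\alpha,\beta}^\ell(S_n f - f)\|_{L^{p(\cdot)}(0,\pi)}$ for $1 \le \ell \le k$.

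The mechanism is that $\mathbb{D}_{\alpha,\beta}^\ell$ acts on the Jacobi system as a shift of the indices: it lowers the degree and raises the type. Recalling the factorization of $D_{\alpha,\beta}$ from the introduction (see also \cite{La}), there are constants $d_{j,\ell}$ with
$$\mathbb{D}_{\alpha,\beta}^\ell \phi_j^{\alpha,\beta} = d_{j,\ell}\, \phi_{j-\ell}^{\alpha+\ell,\beta+\ell}, \quad j \ge \ell,$$
while $\mathbb{D}_{\alpha,\beta}^\ell \phi_j^{\alpha,\beta} = 0$ for $j < \ell$. Applying this termwise to the finite sum $S_n f$ and reindexing gives
$$\mathbb{D}_{\alpha,\beta}^\ell S_n f = \sum_{m=0}^{n-\ell} d_{m+\ell,\ell}\, c_{m+\ell}^{\alpha,\beta}(f)\, \phi_m^{\alpha+\ell,\beta+\ell}.$$
On the other hand, testing the weak identity that defines $\mathbb{D}_{\alpha,\beta}^\ell f$ against the eigenfunctions of the shifted system and using the adjoint relation $(\mathbb{D}_{\alpha,\beta}^\ell)^* \phi_m^{\alpha+\ell,\beta+\ell} = d_{m+\ell,\ell}\, \phi_{m+\ell}^{\alpha,\beta}$ yields the coefficient identity
$$c_m^{\alpha+\ell,\beta+\ell}\big(\mathbb{D}_{\alpha,\beta}^\ell f\big) = d_{m+\ell,\ell}\, c_{m+\ell}^{\alpha,\beta}(f), \quad m \in \N.$$
Hence $\mathbb{D}_{\alpha,\beta}^\ell S_n f$ is precisely the $(n-\ell)$-th Fourier--Jacobi partial sum of $\mathbb{D}_{\alpha,\beta}^\ell f$ relative to the orthonormal system $\{\phi_m^{\alpha+\ell,\beta+\ell}\}_{m \in \N}$.

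Since $\alpha + \ell, \beta + \ell \ge -1/2$ and $p \in \mathcal{B}(0,\pi)$, Proposition~\ref{Prop:B1}, $(ii)$, applies verbatim to the shifted system, so that the partial sums of any $g \in L^{p(\cdot)}(0,\pi)$ converge to $g$ in $L^{p(\cdot)}(0,\pi)$. Because membership of $f$ in $W_{\alpha,\beta}^{k,p(\cdot)}(0,\pi)$ means $\mathbb{D}_{\alpha,\beta}^\ell f \in L^{p(\cdot)}(0,\pi)$, letting $n \to \infty$ (so $n - \ell \to \infty$) gives $\mathbb{D}_{\alpha,\beta}^\ell S_n f \to \mathbb{D}_{\alpha,\beta}^\ell f$ in $L^{p(\cdot)}(0,\pi)$ for every $1 \le \ell \le k$. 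Combined with the zeroth-order convergence this yields $S_n f \to f$ in $W_{\alpha,\beta}^{k,p(\cdot)}(0,\pi)$, proving the density.

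The step I expect to require the most care is the coefficient identity $c_m^{\alpha+\ell,\beta+\ell}(\mathbb{D}_{\alpha,\beta}^\ell f) = d_{m+\ell,\ell}\, c_{m+\ell}^{\alpha,\beta}(f)$. The weak derivative $\mathbb{D}_{\alpha,\beta}^\ell f$ is defined by testing against $C_c^\infty(0,\pi)$, whereas the eigenfunctions $\phi_m^{\alpha+\ell,\beta+\ell}$ are not compactly supported; one must check that the integration by parts implicit in the adjoint relation produces no boundary contribution. This should be guaranteed by the factor $\big(\sin \tfrac{\theta}{2}\big)^{\alpha+\ell+1/2}\big(\cos \tfrac{\theta}{2}\big)^{\beta+\ell+1/2}$ carried by $\phi_m^{\alpha+\ell,\beta+\ell}$, which vanishes to sufficiently high order at $\theta=0,\pi$ when $\ell \ge 1$; the identity can then be obtained by approximating $\phi_m^{\alpha+\ell,\beta+\ell}$ by compactly supported functions and passing to the limit.
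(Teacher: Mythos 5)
Your proof is correct, but it takes a genuinely different route from the paper's. The paper regularizes with the heat semigroup: using the Gaussian kernel bound \eqref{eqS2} from \cite{NoSj3} and the $L^{p(\cdot)}$-boundedness of the maximal operator, it shows $W_t^{\alpha,\beta}f \to f$ in $L^{p(\cdot)}(0,\pi)$ as $t\to 0^+$; it then uses the same two intertwining facts you use (the differentiation rule \eqref{eq:13.1} and the coefficient identity \eqref{eqS1}, both from \cite[Lemmas 3.1 and 3.3]{La}) to recognize ${\mathbb{D}}_{\alpha,\beta}^\ell W_t^{\alpha,\beta}f$ as the heat semigroup of the shifted system applied to ${\mathbb{D}}_{\alpha,\beta}^\ell f$, so that all derivatives up to order $k$ converge as well; finally it truncates the exponentially decaying eigenfunction series of $W_{t_0}^{\alpha,\beta}f$ at a finite index to produce an element of $S_{\alpha,\beta}$ within $2\varepsilon$ in the Sobolev norm. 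You instead take the Fourier--Jacobi partial sums $S_nf$ themselves as approximants and invoke Proposition~\ref{Prop:B1}, $(ii)$, twice: once for the original system, and once for the shifted system $\{\phi_m^{\alpha+\ell,\beta+\ell}\}_{m\in\N}$, which is legitimate since $\alpha+\ell,\beta+\ell \geq -1/2$, after identifying ${\mathbb{D}}_{\alpha,\beta}^\ell S_nf$ with the $(n-\ell)$-th partial sum of ${\mathbb{D}}_{\alpha,\beta}^\ell f$ in that system. Your route is shorter and exhibits a canonical approximating sequence, but it rests on the deeper input behind Proposition~\ref{Prop:B1} (Kerman's weighted estimates \cite{Ke} for the partial-sum operators, i.e.\ weighted Hilbert-transform and Stieltjes bounds, transferred to $L^{p(\cdot)}$ by extrapolation \cite{CFMP}), whereas the paper's semigroup argument needs only the heat-kernel estimate and the maximal theorem, and mirrors the constant-exponent proofs in \cite{BFRTT1} and \cite{La}. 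As for the step you flag as delicate: the coefficient identity is exactly \eqref{eqS1}, which the paper obtains by citing \cite[Lemmas 3.1 and 3.3]{La} after observing that $W^{k,p(\cdot)}_{\alpha,\beta}(0,\pi)\subseteq W^{k,p_-}_{\alpha,\beta}(0,\pi)$, so you may simply invoke that embedding and Langowski's lemmas rather than redo the integration by parts.
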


\begin{proof}
    We proceed following the ideas in the proof of \cite[Proposition 2]{BFRTT1} (see also \cite[Proposition 3.2]{La}).
    Note firstly that, since $L^{p(\cdot)}(0,\pi)\subseteq L^{p_-}(0,\pi)$ (\cite[Theorem 3.3.1]{DHHR}),
    $W^{k,p(\cdot)}_{\alpha ,\beta}(0,\pi)\subseteq W^{k,p_-}_{\alpha ,\beta}(0,\pi)$, where the last Sobolev
    type space $W^{k,p_-}_{\alpha ,\beta}(0,\pi)$ (with constant exponent $p_-$) was studied by Langowski \cite{La}.

    Let $f\in W^{k,p(\cdot)}_{\alpha ,\beta}(0,\pi)$.
    The maximal operator $W_*^{\alpha ,\beta}$ associated with $\{W_t^{\alpha ,\beta}\}_{t>0}$ is defined by
    $$W_*^{\alpha ,\beta}(f)=\sup_{t>0}|W_t^{\alpha ,\beta}(f)|.$$
    According to \cite[Theorem A, and (3)]{NoSj3} we have that
    \begin{equation}\label{eqS2}
        |W_t^{\alpha ,\beta}(\theta,\varphi)|
             \leq C {e^{-c(\theta-\varphi)^2/t} \over{\sqrt{t}}}, \quad \theta,\varphi\in(0,\pi)\ \mbox{and}\ t>0.
    \end{equation}
    From (\ref{eqS2}) we deduce that
    $$W_t^{\alpha ,\beta}(f)
        \leq C{\mathcal M}_c(f),$$
    where ${\mathcal M}_c$ denotes the centered Hardy-Littlewood maximal operator.
    Then, by \cite[Theorem 4.3.8]{DHHR} $W_*^{\alpha ,\beta}$ is a bounded  (sublinear) operator from $L^{p(\cdot)}(0,\pi)$ into itself.
    It is clear that, for every $\phi\in S_{\alpha,\beta}$,
    $$\lim_{t \to 0^+}W_t^{\alpha ,\beta}(\phi) = \phi, \quad  \text{in } L^{p(\cdot)}(0,\pi).$$
   Then, since $S_{\alpha,\beta}$ is dense in $L^{p(\cdot)}(0,\pi)$ (Proposition \ref{propS1}), we obtain that,
    $$\lim_{t \to 0^+} W_t^{\alpha ,\beta}(f) = f, \quad \text{in } L^{p(\cdot)}(0,\pi).$$

    By \cite[Lemmas 3.1 and 3.3]{La}
    \begin{equation}\label{eqS1}
        c_m^{\alpha +\ell,\beta +\ell}\left({\mathbb{D}}_{\alpha,\beta}^\ell f\right)
            =(-1)^\ell \sqrt{(m+1)_\ell(m+\ell+\alpha+\beta +1)_\ell} \ c_{m+\ell}^{\alpha ,\beta}(f), \quad \ell, m \in\Bbb N,\ \ 0\leq \ell\leq k.
    \end{equation}
    Here and in the sequel we denote by $(z)_\ell$, $z>0$, the $\ell$-Pochhammer symbol, that is,
    \begin{equation}\label{eq:9.1}
        (z)_\ell=z(z+1)  \cdots (z+\ell-1), \quad \ell\in\Bbb N,\ \ \ell\geq 1 \quad \mbox{and} \quad (z)_0=1.
    \end{equation}

    By taking into account \cite[(1)]{La} we can differentiate term by term inside the series and
    \cite[Lemma 3.1]{La} and (\ref{eqS1}) lead to
    \begin{align*}
        {\mathbb{D}}_{\alpha,\beta}^\ell W_t^{\alpha ,\beta}f
        & =\sum_{m=0}^{\infty}{e^{-t{\lambda}_m^{\alpha ,\beta}}c_{m}^{\alpha ,\beta}(f) \ {\mathbb{D}}_{\alpha,\beta}^\ell {\phi}_m^{\alpha ,\beta}} \\
        & = \sum_{m=\ell}^{\infty}{e^{-t{\lambda}_m^{\alpha ,\beta}}(-1)^m \sqrt{(m-\ell+1)_\ell(m+\alpha+\beta+1)_\ell}
                \ c_{m}^{\alpha ,\beta}(f) \ {\phi}_{m-\ell}^{\alpha+\ell ,\beta+\ell}} \\
        & = \sum_{m=\ell}^{\infty}{e^{-t{\lambda}_m^{\alpha ,\beta}}c_{m-\ell}^{\alpha+\ell ,\beta+\ell}\left({\mathbb{D}}_{\alpha,\beta}^\ell f\right){\phi}_{m-\ell}^{\alpha+\ell ,\beta+\ell}} \\
        & = \sum_{m=0}^{\infty}{e^{-t{\lambda}_m^{\alpha+\ell ,\beta+\ell}}c_{m}^{\alpha+\ell ,\beta+\ell}\left({\mathbb{D}}_{\alpha,\beta}^\ell f\right){\phi}_{m}^{\alpha+\ell ,\beta+\ell}}, \quad  \ell\in\Bbb N,\  0\leq \ell\leq k.
    \end{align*}
    Hence, for every $\ell\in\Bbb N$, $0\leq \ell\leq k$,
    $$\lim_{t \to 0^+} {\mathbb{D}}_{\alpha,\beta}^\ell W_t^{\alpha ,\beta}f = {\mathbb{D}}_{\alpha,\beta}^\ell f, \quad \text{in } L^{p(\cdot)}(0,\pi).$$

    Let $\varepsilon >0$. There exists $t_0>0$ such that, for every $0<t<t_0$,
    $$\|{\mathbb{D}}_{\alpha,\beta}^\ell W_t^{\alpha ,\beta}f-{\mathbb{D}}_{\alpha,\beta}^\ell f\|_{L^{p(\cdot)}(0,\pi)}
        <\varepsilon, \quad \ell\in\Bbb N,\ \ 0\leq l\leq k.$$
    On the other hand, by using \cite[(1)]{La}, \cite[Theorem 3.3.11]{DHHR} and H\"{o}lder inequality we get, for every
    $\theta\in (0,\pi)$ and $\ell,m \in\Bbb N$,
    $$\left|c_{m}^{\alpha+\ell ,\beta+\ell}\left({\mathbb{D}}_{\alpha,\beta}^\ell f\right)\right|\left|{\phi}_{m}^{\alpha+\ell ,\beta+\ell}(\theta)\right|
        \leq C\|{\mathbb{D}}_{\alpha,\beta}^\ell f\|_{L^{p_-}(0,\pi)}(m+1)^{\alpha+\beta+2\ell+2}.$$
    Hence, there exists $m_0\in\Bbb N$, $m_0\geq k$, such that
    \begin{align*}
        &  \left\|\sum_{m=M+1}^{\infty}{e^{-t_0{\lambda}_m^{\alpha+\ell ,\beta+\ell}}c_{m}^{\alpha+\ell ,\beta+\ell}\left({\mathbb{D}}_{\alpha,\beta}^\ell f\right){\phi}_{m}^{\alpha+\ell ,\beta+\ell}}\right\|_{L^{p(\cdot)}(0,\pi)} \\
        & \qquad \leq C\sum_{m=m_0+1}^{\infty}{e^{-t_0(m+{{\alpha+\beta+2\ell}\over 2})^2}(m+1)^{\alpha+\beta+2\ell +2}}<\varepsilon, \quad \ell\in\Bbb N,\ 0\leq \ell\leq k,\ M\in\Bbb N,\ M\geq m_0.
    \end{align*}
    Then,
    $$\left\|\sum_{m=0}^{m_0}{e^{-t_0{\lambda}_m^{\alpha ,\beta}}c_{m}^{\alpha ,\beta}(f){\phi}_{m}^{\alpha ,\beta}}-f\right\|_{W^{k,p(\cdot)}_{\alpha ,\beta}(0,\pi)}
        <2\varepsilon.$$
    Thus, we have proved that $f$ is in the closure of $S_{\alpha,\beta}$ in $W^{k,p(\cdot)}_{\alpha ,\beta}(0,\pi)$ and the proof is finished.
\end{proof}

%\newpage
%%%%%%%%%%%%%%%%%%%%%%%%%%%%%%%%%%%%%%%%%%%%%%%%%%%%%%%%%%%%%%%%%%%%%%%%%%%%%%%%%%
\section{Jacobi multipliers in weighted $L^p$-spaces} \label{sec:3}
%%%%%%%%%%%%%%%%%%%%%%%%%%%%%%%%%%%%%%%%%%%%%%%%%%%%%%%%%%%%%%%%%%%%%%%%%%%%%%%%%%
This section deals, among other things, with the proof of the $H_{\alpha,\beta}^{k/2,p(\cdot)}$-density of $S_{\alpha,\beta}$ claimed in  assertions (a) and (d) of Section \ref{sec:intro}.

Let $m=(m_k)_{k=0}^{\infty}$ be a bounded sequence of real numbers.
The Jacobi multiplier $T_m^{\alpha,\beta}$ associated with $m$ is defined by
$$T_m^{\alpha,\beta}f=\sum_{k=0}^{\infty}{m_kc_k^{\alpha,\beta}(f){\phi}_k^{\alpha,\beta}}, \quad f\in L^2(0,\pi).$$
Plancherel's equality implies that $T_m^{\alpha,\beta}$ is bounded on $L^2(0,\pi)$.
Sufficient conditions which allow to extend $T_m^{\alpha,\beta}$ as a bounded operator to $L^p(0,\pi)$
and to certain weighted $L^p(0,\pi)$ spaces have been established by several authors
(see \cite{BU}, \cite{BC}, \cite{CS}, \cite{GT}, \cite{M}, \cite{Mu5}, \cite{MS}  and \cite{Wr},   amongst others).

The goal of this section is to establish a multiplier theorem in  $L^{p(\cdot)}(0,\pi)$. Previously we need  to show a multiplier
result for  $L^p_w(0,\pi)$ when $w\in A_p(0,\pi)$. In order to achieve this we invoke a general multiplier theorem due to Meda \cite {Me1} (see also \cite{Wr}).

Let $-\infty<a<\left({{\alpha+\beta+1}\over 2}\right)^2$. We consider the operator
$${\mathcal L}_{\alpha,\beta;a}
    ={\mathcal L}_{\alpha,\beta}-a.$$
It is clear that, for every $k\in\Bbb N$, ${\phi}_k^{\alpha,\beta}$ is an eigenfunction for ${\mathcal L}_{\alpha,\beta;a}$ associated with the eigenvalue
$${\lambda}_k^{\alpha,\beta;a}
    =\left(k+{{\alpha+\beta+1}\over 2}\right)^2-a=k(k+\alpha+\beta+1)+\left({{\alpha+\beta+1}\over 2}\right)^2-a.$$
${\mathcal L}_{\alpha,\beta;a}$ is a nonnegative and selfadjoint operator on $L^2(0,\pi)$.
Moreover, ${\mathcal L}_{\alpha,\beta;a}$ generates a (heat) semigroup $\{W_t^{\alpha ,\beta;a}\}_{t>0}$ on $L^2(0,\pi)$, given by
$$W_t^{\alpha ,\beta;a}(f)
    =\int_0^{\pi}{W_t^{\alpha ,\beta;a}(\theta,\varphi)f(\varphi)d\varphi}, \quad f\in L^2(0,\pi), \quad t>0,$$
and
$$W_t^{\alpha ,\beta;a}(\theta,\varphi)
    =\sum_{k=0}^{\infty}{e^{-t{\lambda}_k^{\alpha ,\beta;a}}{\phi}_k^{\alpha ,\beta}(\theta){\phi}_k^{\alpha ,\beta}(\varphi)}, \quad \ \theta,\varphi\in(0,\pi),\ \mbox{and}\ t>0.$$
According to \cite[Theorem A, (3) and (9)]{NoSj3} we have that
$$\left|W_t^{\alpha ,\beta;a}(\theta,\varphi)\right|
    \leq Ce^{-(({{\alpha+\beta+1}\over 2})^2-a)t}\;{{e^{-c(\theta-\varphi)^2/t}}\over{\sqrt{t}}}, \quad \theta,\varphi\in(0,\pi)\ \mbox{and}\ t>0.$$

Let $\gamma\in\Bbb R\backslash\{0\}$. The imaginary power ${\mathcal L}_{\alpha,\beta;a}^{i\gamma}$
of ${\mathcal L}_{\alpha,\beta;a}$ is the spectral multiplier $g({\mathcal L}_{\alpha,\beta;a})$ where $g(x)=x^{i\gamma}$, $x>0$, that is,
$${\mathcal L}_{\alpha,\beta;a}^{i\gamma}(f)
    =\sum_{k=0}^{\infty}{({\lambda}_k^{\alpha ,\beta;a})^{i\gamma}c_k^{\alpha ,\beta}(f){\phi}_k^{\alpha ,\beta}}, \quad f\in L^2(0,\pi).$$

The operator ${\mathcal L}_{\alpha,\beta;a}^{i\gamma}$ can be seen as a Laplace transform type
multiplier for ${\mathcal L}_{\alpha,\beta;a}$. Then, a general result due to Stein \cite[Corollary 3, p. 121]{Ste1}
applies to deduce that ${\mathcal L}_{\alpha,\beta;a}^{i\gamma}$ can be extended from
$L^2(0,\pi)\cap L^p(0,\pi)$ to  $L^p(0,\pi)$ as a bounded operator on  $L^p(0,\pi)$, for every $1<p<\infty$. Also, by proceeding as in \cite{NoSj2} we can see that
${\mathcal L}_{\alpha,\beta;a}^{i\gamma}$ is a Calder\'on-Zygmund operator in the sense of a space of homogeneous type $((0,\pi),d\theta,|\cdot|)$,
where $|\cdot|$ stands for the Euclidean metric.
 Then, ${\mathcal L}_{\alpha,\beta;a}^{i\gamma}$
defines a bounded operator from $L^p_{w}(0,\pi)$ into itself, for every $1<p<\infty$ and $w\in A_p(0,\pi)$.
Moreover, classical arguments (see for instance, \cite[Chapter 7, Section 4]{Duo}) allow us to obtain that,
for every $1<p<\infty$ and $w\in A_p(0,\pi)$,
\begin{equation}\label{eqM1}
    \| {\mathcal L}_{\alpha,\beta;a}^{i\gamma}\|_{L^p_{w}(0,\pi)\rightarrow L^p_{w}(0,\pi)}
        \leq C_{p,w}e^{\pi |\gamma|/2},
\end{equation}
where $C_{p,w}>0$ does not depend on $\gamma$. Estimation (\ref{eqM1}) shows an exponential increase with respect to
$|\gamma|$ of the operator norm $\| {\mathcal L}_{\alpha,\beta;a}^{i\gamma}\|_{L^p_{w}(0,\pi)\rightarrow L^p_{w}(0,\pi)}$ which
is not sufficient to obtain our multiplier result. Actually, the exponential behavior in
(\ref{eqM1}) can be replaced by a polynomial growth. Indeed, according to \cite[Theorem 1.3 and Remarks 1.4 and 1.5]{CA}
we have that, for every $1<p<\infty$ and $w\in A_p(0,\pi)$,
\begin{equation*}\label{eqM1.5}
    \| {\mathcal L}_{\alpha,\beta;a}^{i\gamma}\|_{L^p_{w}(0,\pi)\rightarrow L^p_{w}(0,\pi)}
        \leq C_{p,w}(1+|\gamma|),
\end{equation*}
where $C_{p,w}>0$ does not depend on $\gamma$.

We now establish our result concerning the $ L^p_{w}(0,\pi)$-boundedness of spectral multipliers for the operator $ {\mathcal L}_{\alpha,\beta;a}.$

\begin{Prop}\label{propM1}
    Let $1<p<\infty$, $\alpha,\beta\geq -1/2$ and $-\infty<a<\left({{\alpha+\beta+1}\over 2}\right)^2$.
    Assume that:
    \begin{itemize}
        \item[$(i)$] $m$ is a bounded holomorphic function on $\{z\in\Bbb C\ : \ \mathrm{Re}\ z>0\}$; or
        \item[$(ii)$] $m \in C^\infty(0,\pi)$ and for every $\ell \in \N$
        \begin{equation}\label{eq:Mihlin}
            \sup_{x\in(0,\infty)}\left|x^\ell {{d^\ell }\over{dx^\ell }}m(x)\right| <\infty.
        \end{equation}
    \end{itemize}
    Then, the spectral multiplier $m({\mathcal L}_{\alpha,\beta;a})$ related to the operator ${\mathcal L}_{\alpha,\beta;a}$ given by
    \begin{equation}\label{eqM2}
        m({\mathcal L}_{\alpha,\beta;a}) f
            =\sum_{k=0}^{\infty}{m({\lambda}_k^{\alpha,\beta;a})c_k^{\alpha,\beta}(f){\phi}_k^{\alpha,\beta}},
    \end{equation}
    is bounded from  $L^p_{w}(0,\pi)$ into itself, for every $w\in A_p(0,\pi)$.
\end{Prop}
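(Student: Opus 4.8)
The plan is to deduce the statement from the general multiplier theorem of Meda \cite{Me1}, whose decisive hypothesis---a polynomial bound for the operator norms of the imaginary powers $\mathcal{L}_{\alpha,\beta;a}^{i\gamma}$---has already been secured above.

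First I would reduce case $(i)$ to case $(ii)$. If $m$ is bounded and holomorphic on $\{z\in\C:\mathrm{Re}\,z>0\}$, then for every $x>0$ the circle $|z-x|=x/2$ stays in the right half-plane, and Cauchy's integral formula gives
$$\Big| x^\ell \frac{d^\ell}{dx^\ell} m(x)\Big|
    = x^\ell \Big| \frac{\ell!}{2\pi i}\oint_{|z-x|=x/2}\frac{m(z)}{(z-x)^{\ell+1}}\,dz\Big|
    \le \ell!\,2^\ell\,\|m\|_\infty, \quad \ell\in\N,\ x>0.$$
Thus $m$ satisfies \eqref{eq:Mihlin}, so it will be enough to argue under hypothesis $(ii)$.

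Next I would record the structural facts that feed Meda's theorem. The operator $\mathcal{L}_{\alpha,\beta;a}$ is nonnegative and selfadjoint on $L^2(0,\pi)$, and its spectrum $\{\lambda_k^{\alpha,\beta;a}\}_{k\in\N}$ lies in $[\lambda_0^{\alpha,\beta;a},\infty)$ with $\lambda_0^{\alpha,\beta;a}=\big(\tfrac{\alpha+\beta+1}{2}\big)^2-a>0$, precisely because $a<\big(\tfrac{\alpha+\beta+1}{2}\big)^2$. Hence $m$ only needs to be controlled on a half-line bounded away from the origin, which eliminates any delicate behaviour of the symbol at $0$. The essential analytic ingredient is the polynomial bound deduced above from \cite[Theorem 1.3 and Remarks 1.4 and 1.5]{CA}, namely
$$\|\mathcal{L}_{\alpha,\beta;a}^{i\gamma}\|_{L^p_w(0,\pi)\to L^p_w(0,\pi)}
    \le C_{p,w}(1+|\gamma|), \quad \gamma\in\R,$$
valid for every $1<p<\infty$ and $w\in A_p(0,\pi)$ with $C_{p,w}$ independent of $\gamma$.

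Finally I would invoke Meda's theorem. Its mechanism is to write $m(\mathcal{L}_{\alpha,\beta;a})$ as a superposition of the imaginary powers through the Mellin transform: after a dyadic decomposition $m=\sum_j m_j$ with $m_j$ supported in $[2^{j-1},2^{j+1}]$, each rescaled piece $y\mapsto m_j(2^j y)$ is smooth, supported in $[1/2,2]$, with all derivatives bounded uniformly in $j$ by \eqref{eq:Mihlin}, so that its Mellin transform decays faster than any power of $|\gamma|$ uniformly in $j$. Expressing each $m_j(\mathcal{L}_{\alpha,\beta;a})$ as an integral of $\mathcal{L}_{\alpha,\beta;a}^{i\gamma}$ against this rapidly decreasing kernel, estimating with the displayed bound (the factor $(1+|\gamma|)$ being absorbed by the decay), and summing over $j$ as in Meda's theorem, one reaches the bound $\|m(\mathcal{L}_{\alpha,\beta;a})\|_{L^p_w(0,\pi)\to L^p_w(0,\pi)}\le C_{p,w}$. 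The step I expect to be the main obstacle is ensuring that Meda's argument runs in the weighted space $X=L^p_w(0,\pi)$ rather than in $L^p(0,\pi)$: the only property of $X$ that actually enters is the weighted polynomial bound above, and the smoothness of every order supplied by \eqref{eq:Mihlin} comfortably exceeds the threshold $s>\tfrac32$ forced by the degree-one growth in $|\gamma|$. It is exactly for this matching that the exponential estimate \eqref{eqM1} had to be upgraded to the polynomial one.
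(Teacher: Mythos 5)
Your proposal is correct and follows essentially the same route as the paper: the paper's proof consists precisely of the polynomial bound $\|\mathcal{L}_{\alpha,\beta;a}^{i\gamma}\|_{L^p_w(0,\pi)\to L^p_w(0,\pi)}\le C_{p,w}(1+|\gamma|)$ taken from \cite[Theorem 1.3 and Remarks 1.4 and 1.5]{CA}, followed by an appeal to Meda's general multiplier theorem \cite[Theorem 3 or Corollary 1]{Me1}, exactly as you do. Your Cauchy-estimate reduction of case $(i)$ to the Mihlin condition $(ii)$ and your sketch of the Mellin-transform mechanism inside Meda's proof are correct elaborations of details the paper leaves to the citation.
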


This result can be proved as in \cite[Theorem 3 or Corollary 1]{Me1}. By using now
\cite[Theorem 1.3]{CFMP} we deduce from Proposition \ref{propM1} the following $L^{p(\cdot)}$-boundedness result for
spectral multipliers associated with $ {\mathcal L}_{\alpha,\beta;a}$.

\begin{Prop}\label{propM2}
    Let  $\alpha,\beta\geq -1/2$ and  $-\infty<a<\left({{\alpha+\beta+1}\over 2}\right)^2$.
    Assume that $p \in{\mathcal B}(0,\pi)$.
    If $m$ satisfies condition $(i)$ or $(ii)$ of Proposition~\ref{propM1}, then the spectral multiplier $m({\mathcal L}_{\alpha,\beta;a})$
    given by (\ref{eqM2}) defines a bounded operator from $ L^{p(\cdot)}(0,\pi)$ into itself.
\end{Prop}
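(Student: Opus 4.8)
The approach is pure extrapolation: the entire analytic content sits in Proposition~\ref{propM1}, and the passage to the variable exponent space is a direct application of Rubio de Francia extrapolation, as already announced in the introduction. I would begin by observing that, whichever of the two hypotheses $(i)$ or $(ii)$ the multiplier $m$ satisfies, Proposition~\ref{propM1} provides the weighted estimate for $m(\mathcal{L}_{\alpha,\beta;a})$ on $L^p_w(0,\pi)$ for all $1<p<\infty$ and all $w\in A_p(0,\pi)$; crucially, the norm bound there depends on $w$ only through its $A_p$ characteristic, as is standard for Meda-type (Calder\'on--Zygmund) multiplier theorems. It is therefore enough to prove the a priori inequality
$$\|m(\mathcal{L}_{\alpha,\beta;a})f\|_{L^{p(\cdot)}(0,\pi)}\le C\|f\|_{L^{p(\cdot)}(0,\pi)},\qquad f\in S_{\alpha,\beta},$$
since on $S_{\alpha,\beta}$ the series \eqref{eqM2} is a finite sum (no convergence question arises), and Proposition~\ref{propS1} then yields the bounded extension to all of $L^{p(\cdot)}(0,\pi)$, which agrees with \eqref{eqM2} on $S_{\alpha,\beta}$ by construction.

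To run the extrapolation I would fix $p_0\in(1,p_-)$ and read Proposition~\ref{propM1} at the exponent $p_0$, obtaining
$$\int_0^\pi \big|m(\mathcal{L}_{\alpha,\beta;a})f(\theta)\big|^{p_0}w(\theta)\,d\theta\le C\int_0^\pi|f(\theta)|^{p_0}w(\theta)\,d\theta,\qquad w\in A_{p_0}(0,\pi),$$
with $C$ controlled by $[w]_{A_{p_0}}$. Viewing this as a weighted $L^{p_0}$ bound for the family of pairs $\big(|m(\mathcal{L}_{\alpha,\beta;a})f|,|f|\big)$ indexed by $f\in S_{\alpha,\beta}$, the extrapolation theorem \cite[Theorem 1.3]{CFMP} transfers it to the variable exponent scale and gives precisely the displayed $L^{p(\cdot)}$ inequality, provided $p(\cdot)$ meets the regularity hypothesis demanded there.

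The only point requiring care --- and it is not a genuine obstacle, merely a matching of hypotheses --- is that regularity condition on $p(\cdot)$. As discussed in the introduction, the condition used in \cite[Theorem 1.3]{CFMP}, namely $p\in\mathcal{P}(0,\pi)$ together with $(p(\cdot)/p_0)'\in\mathcal{B}(0,\pi)$ for some $p_0\in(1,p_-)$, may be replaced by the single assumption $p\in\mathcal{B}(0,\pi)$: invoking \cite[Theorem 1.2]{CFMP}, the hypothesis $p\in\mathcal{B}(0,\pi)$ furnishes an extension $\widetilde p\in\mathcal{B}(\mathbb{R})$ of $p$ from $(0,\pi)$ to $\mathbb{R}$, which is exactly what is needed to satisfy the hypotheses of the extrapolation theorem. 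With this substitution in place, \cite[Theorem 1.3]{CFMP} applies under our standing assumption $p\in\mathcal{B}(0,\pi)$, and the argument is complete.
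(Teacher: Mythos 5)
Your proposal is correct and follows exactly the paper's own route: the paper proves Proposition~\ref{propM2} in one line by applying the extrapolation theorem \cite[Theorem 1.3]{CFMP} to the weighted estimates of Proposition~\ref{propM1}, with the hypothesis on $p(\cdot)$ replaced by $p\in\mathcal{B}(0,\pi)$ via \cite[Theorem 1.2]{CFMP} and the extension of $p$ to $\mathbb{R}$, just as you describe. Your write-up merely makes explicit the details the paper leaves implicit (the a priori estimate on the dense subspace $S_{\alpha,\beta}$, the choice of $p_0\in(1,p_-)$, and the family of pairs fed into the extrapolation), so there is nothing to add.
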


The negative powers of $ {\mathcal L}_{\alpha,\beta}$ defined in \eqref{eq:negpow} are spectral multipliers for the Jacobi operator that will be useful in the sequel.
Suppose that $\gamma>0$ and $\alpha + \beta \neq -1$.
Since ${\lambda}_k^{\alpha,\beta}\geq\left({{\alpha+\beta+1}\over 2}\right)^2$, $k\in\Bbb N$,
the operator ${\mathcal L}_{\alpha,\beta}^{ -\gamma}$ is bounded from $ L^2(0,\pi)$ into itself.

We take $a={1\over 2}\left({{\alpha+\beta+1}\over 2}\right)^2$. We can write
$${\mathcal L}_{\alpha,\beta}^{ -\gamma}f
    =\sum_{k=0}^{\infty}{({\lambda}_k^{\alpha,\beta;a}+a)^{-\gamma}c_k^{\alpha,\beta}(f){\phi}_k^{\alpha,\beta}}
    =T_{m_{\gamma}}^{\alpha,\beta;a}(f), \quad f\in L^2(0,\pi),$$
where $m_{\gamma}(z)=(z+a)^{-\gamma}$, $z\in\Bbb C$, $\mathrm{Re}\ z>0$. Since $m_{\gamma}$ is a bounded
holomorphic function on $\{z\in\Bbb C\ : \ \mathrm{Re}\ z>0\}$ from Propositions \ref{propM1} and \ref{propM2} we deduce the following.

\begin{Prop}\label{propM3}
    Let $\gamma>0$ and $\alpha,\beta\geq -1/2$ such that $\alpha + \beta \neq -1$.
    \begin{enumerate}
        \item[(a)] If $1<p<\infty$ and $w\in A_p(0,\pi)$, then ${\mathcal L}_{\alpha,\beta}^{ -\gamma}$
        can be extended from  $ L^2(0,\pi)\cap  L^p_{w}(0,\pi)$ to  $ L^p_{w}(0,\pi)$ as a bounded operator from $ L^p_{w}(0,\pi)$ into itself.
        \item[(b)] If $p \in{\mathcal B}(0,\pi)$,
        then ${\mathcal L}_{\alpha,\beta}^{ -\gamma}$ defines a bounded operator from $ L^{p(\cdot)}(0,\pi)$ into itself.
    \end{enumerate}
\end{Prop}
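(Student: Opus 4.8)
The plan is to exhibit ${\mathcal L}_{\alpha,\beta}^{-\gamma}$ as a spectral multiplier of the shifted operator ${\mathcal L}_{\alpha,\beta;a}$ and then invoke Propositions~\ref{propM1} and~\ref{propM2}, exactly as the excerpt has already set up. First I would fix the shift $a=\tfrac12\big(\tfrac{\alpha+\beta+1}{2}\big)^2$, which is admissible since the hypothesis $\alpha+\beta\neq -1$ guarantees $\big(\tfrac{\alpha+\beta+1}{2}\big)^2>0$, so that $a<\big(\tfrac{\alpha+\beta+1}{2}\big)^2$ as required by the two Propositions. With this choice the eigenvalues satisfy $\lambda_k^{\alpha,\beta}=\lambda_k^{\alpha,\beta;a}+a$, and on the common domain $L^2(0,\pi)$ one has
$${\mathcal L}_{\alpha,\beta}^{-\gamma}f
    =\sum_{k=0}^{\infty}{(\lambda_k^{\alpha,\beta;a}+a)^{-\gamma}c_k^{\alpha,\beta}(f)\phi_k^{\alpha,\beta}}
    =m_\gamma({\mathcal L}_{\alpha,\beta;a})f,$$
where $m_\gamma(z)=(z+a)^{-\gamma}$. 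This identification is the content of the paragraph preceding the statement, so the work reduces to checking that $m_\gamma$ meets one of the hypotheses of Proposition~\ref{propM1}.

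The key step is then to verify that $m_\gamma$ satisfies hypothesis $(i)$: it must be a bounded holomorphic function on the right half-plane $\{z\in\Bbb C:\mathrm{Re}\,z>0\}$. Holomorphy is clear because $z+a$ avoids the origin there (indeed $\mathrm{Re}(z+a)>a>0$), so $(z+a)^{-\gamma}$ is a well-defined holomorphic branch. Boundedness follows from $|z+a|\geq\mathrm{Re}(z+a)>a>0$, whence $|m_\gamma(z)|=|z+a|^{-\gamma}\le a^{-\gamma}$ uniformly on the half-plane. This is precisely where positivity of $a$ (hence of $\big(\tfrac{\alpha+\beta+1}{2}\big)^2$, i.e.\ the condition $\alpha+\beta\neq -1$) is used in an essential way: without a strictly positive shift the symbol would blow up near $z=0$.

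Having confirmed hypothesis $(i)$, part $(a)$ follows directly from Proposition~\ref{propM1}, which gives boundedness of $m_\gamma({\mathcal L}_{\alpha,\beta;a})$ on $L^p_w(0,\pi)$ for every $1<p<\infty$ and $w\in A_p(0,\pi)$; since ${\mathcal L}_{\alpha,\beta}^{-\gamma}$ is already defined and bounded on $L^2(0,\pi)$ (because $\lambda_k^{\alpha,\beta}\ge\big(\tfrac{\alpha+\beta+1}{2}\big)^2>0$), one obtains the desired extension from $L^2\cap L^p_w$ to all of $L^p_w$. Part $(b)$ follows in the same manner from Proposition~\ref{propM2}, applied with $p\in{\mathcal B}(0,\pi)$. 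I do not anticipate any genuine obstacle here: the entire difficulty has been absorbed into the general multiplier theorems of the preceding Propositions, and what remains is the routine but necessary check that the negative-power symbol $m_\gamma$ is bounded and holomorphic on the half-plane, which hinges only on the strict positivity of the shift afforded by $\alpha+\beta\neq-1$.
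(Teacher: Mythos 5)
Your proposal is correct and follows essentially the same route as the paper: the paper likewise takes $a=\tfrac12\big(\tfrac{\alpha+\beta+1}{2}\big)^2$, writes ${\mathcal L}_{\alpha,\beta}^{-\gamma}=T_{m_\gamma}^{\alpha,\beta;a}$ with $m_\gamma(z)=(z+a)^{-\gamma}$ bounded and holomorphic on $\{\mathrm{Re}\,z>0\}$, and then cites Propositions~\ref{propM1} and \ref{propM2}. The only difference is that you spell out the boundedness and holomorphy check $|m_\gamma(z)|\le a^{-\gamma}$, which the paper simply asserts.
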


We also have the injectivity of ${\mathcal L}_{\alpha,\beta}^{ -\gamma}$ on $ L^p_{w}(0,\pi)$ and $ L^{p(\cdot)}(0,\pi)$.

\begin{Prop}\label{propM4}
    Let $\gamma>0$ and $\alpha,\beta\geq -1/2$ such that $\alpha + \beta \neq -1$.
    \begin{enumerate}
        \item[(a)] If $1<p<\infty$ and $w\in A_p(0,\pi)$, then ${\mathcal L}_{\alpha,\beta}^{ -\gamma}$
        is one to one on  $ L^p_{w}(0,\pi)$.
        \item[(b)] Assume that $p \in{\mathcal B}(0,\pi)$.
        Then, ${\mathcal L}_{\alpha,\beta}^{ -\gamma}$ is one to one on $ L^{p(\cdot)}(0,\pi)$.
    \end{enumerate}
\end{Prop}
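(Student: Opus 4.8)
The plan is to show that the Fourier--Jacobi coefficients of ${\mathcal L}_{\alpha,\beta}^{-\gamma}f$ are obtained from those of $f$ by multiplication by the nonzero factors $(\lambda_k^{\alpha,\beta})^{-\gamma}$, and then to invoke completeness of the Jacobi system. The crucial identity is
$$c_k^{\alpha,\beta}\big({\mathcal L}_{\alpha,\beta}^{-\gamma}f\big)=(\lambda_k^{\alpha,\beta})^{-\gamma}\,c_k^{\alpha,\beta}(f),\quad k\in\Bbb N,$$
which I would establish for every $f$ in the space under consideration.

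For part $(a)$, this identity is immediate when $f\in S_{\alpha,\beta}$, directly from \eqref{eq:negpow} and the orthonormality of $\{\phi_n^{\alpha,\beta}\}_{n\in\Bbb N}$. To extend it to all $f\in L^p_w(0,\pi)$ I would combine continuity with density. Each functional $c_k^{\alpha,\beta}$ is bounded on $L^p_w(0,\pi)$: since $\phi_k^{\alpha,\beta}\in L^\infty(0,\pi)$ and $L^p_w(0,\pi)\subseteq L^1(0,\pi)$ with continuous inclusion (because $w\in A_p(0,\pi)$ on the bounded interval $(0,\pi)$), one has $|c_k^{\alpha,\beta}(f)|\leq C\|f\|_{L^p_w(0,\pi)}$. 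Together with the $L^p_w$-boundedness of ${\mathcal L}_{\alpha,\beta}^{-\gamma}$ (Proposition~\ref{propM3}$(a)$), both sides of the displayed identity are continuous linear functionals of $f$; as they agree on $S_{\alpha,\beta}$, which is dense in $L^p_w(0,\pi)$ by Proposition~\ref{Prop:B1}$(i)$, they agree throughout. Now if ${\mathcal L}_{\alpha,\beta}^{-\gamma}f=0$ then $(\lambda_k^{\alpha,\beta})^{-\gamma}c_k^{\alpha,\beta}(f)=0$ for every $k$, and since $\lambda_k^{\alpha,\beta}>0$ we obtain $c_k^{\alpha,\beta}(f)=0$ for all $k$; the partial sums $\sum_{k=0}^n c_k^{\alpha,\beta}(f)\phi_k^{\alpha,\beta}$ are then identically zero, so $f=\lim_{n\to\infty}\sum_{k=0}^n c_k^{\alpha,\beta}(f)\phi_k^{\alpha,\beta}=0$ by Proposition~\ref{Prop:B1}$(i)$.

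For part $(b)$ I would repeat the argument in the variable exponent setting. Continuity of $c_k^{\alpha,\beta}$ on $L^{p(\cdot)}(0,\pi)$ follows from the duality $\big(L^{p(\cdot)}(0,\pi)\big)^*\cong L^{p'(\cdot)}(0,\pi)$ together with $\phi_k^{\alpha,\beta}\in L^{p'(\cdot)}(0,\pi)$, both recorded at the start of Section~\ref{sec:2}; the operator is bounded by Proposition~\ref{propM3}$(b)$ and $S_{\alpha,\beta}$ is dense by Proposition~\ref{propS1}. The coefficient identity then extends by density exactly as before, and the vanishing of all $c_k^{\alpha,\beta}(f)$ forces $f=0$ directly by Corollary~\ref{corS2}.

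The only delicate point is the extension of the coefficient identity off $S_{\alpha,\beta}$: one must confirm that the bounded extension of ${\mathcal L}_{\alpha,\beta}^{-\gamma}$ supplied by Proposition~\ref{propM3} genuinely acts as the spectral multiplier on the Fourier--Jacobi side, rather than merely agreeing with the $L^2$ definition. This is settled purely by the continuity of each $c_k^{\alpha,\beta}$ on the relevant space, so no estimates beyond those already established are required.
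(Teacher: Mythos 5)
Your proof is correct and follows essentially the same route as the paper: establish the coefficient identity $c_k^{\alpha,\beta}({\mathcal L}_{\alpha,\beta}^{-\gamma}f)=(\lambda_k^{\alpha,\beta})^{-\gamma}c_k^{\alpha,\beta}(f)$ on $S_{\alpha,\beta}$, extend it by density using the boundedness of ${\mathcal L}_{\alpha,\beta}^{-\gamma}$ (Proposition~\ref{propM3}) and the continuity of the coefficient functionals, and conclude via Corollary~\ref{corS2} (resp.\ the convergence of partial sums in Proposition~\ref{Prop:B1}). The only difference is cosmetic: the paper writes out case (b) and declares (a) ``similar,'' whereas you supply the weighted details (the embedding $L^p_w(0,\pi)\subseteq L^1(0,\pi)$ and Proposition~\ref{Prop:B1}$(i)$) explicitly.
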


\begin{proof}
    We prove (b). Property (a) can be shown in a similar way. It is clear that if $f\in S_{\alpha,\beta}$ we have that
    \begin{equation}\label{eqM3}
    c_k^{\alpha,\beta}({\mathcal L}_{\alpha,\beta}^{ -\gamma}f)=({\lambda}_k^{\alpha,\beta})^{-\gamma}c_k^{\alpha,\beta}(f), \quad k\in\Bbb N.
    \end{equation}
    Since ${\mathcal L}_{\alpha,\beta}^{ -\gamma}$ is bounded from $ L^{p(\cdot)}(0,\pi)$ into itself
    (see Proposition \ref{propM3}); for every $k\in\Bbb N$,
    ${\phi}_k^{\alpha,\beta}\in L^{p'(\cdot)}(0,\pi)=\left(L^{p(\cdot)}(0,\pi)\right)^*$ (\cite[Theorem 3.4.6]{DHHR})
    and $S_{\alpha,\beta}$ is dense in $L^{p(\cdot)}(0,\pi)$ (Proposition \ref{propS1}),
    we conclude that (\ref{eqM3}) holds for every $f\in L^{p(\cdot)}(0,\pi)$. Then, from Corollary \ref{corS2}
    we deduce that $f=0$ provided that ${\mathcal L}_{\alpha,\beta}^{ -\gamma}f=0$.
\end{proof}

By using Proposition~\ref{Prop:B1} we obtain the following characterization of the potential space $H_{\alpha,\beta}^{\gamma,p(\cdot)}(0,\pi)$.

\begin{Prop}\label{Prop:M5}
    Let $\gamma>0$ and $\alpha,\beta \geq -1/2$ such that $\alpha + \beta \neq -1$.
    Assume that $p \in{\mathcal B}(0,\pi)$. A function $f \in L^{p(\cdot)}(0,\pi)$ is in
    $H_{\alpha,\beta}^{\gamma,p(\cdot)}(0,\pi)$ if, and only if, the series
    $\sum_{n=0}^\infty (\lambda_n^{\alpha,\beta})^\gamma c_n^{\alpha,\beta}(f) \phi_n^{\alpha,\beta}$ converges in
    $L^{p(\cdot)}(0,\pi)$. Moreover, for every $f \in H_{\alpha,\beta}^{\gamma,p(\cdot)}(0,\pi)$,
    $$\|f\|_{H_{\alpha,\beta}^{\gamma,p(\cdot)}(0,\pi)}
        = \Big\| \sum_{n=0}^\infty (\lambda_n^{\alpha,\beta})^\gamma c_n^{\alpha,\beta}(f) \phi_n^{\alpha,\beta} \Big\|_{L^{p(\cdot)}(0,\pi)}.$$
\end{Prop}

\begin{proof}
    Let $f \in L^{p(\cdot)}(0,\pi)$. Suppose that $f \in H_{\alpha,\beta}^{\gamma,p(\cdot)}(0,\pi)$. Then, there exists
    $g \in L^{p(\cdot)}(0,\pi)$ such that $f=\mathcal{L}_{\alpha,\beta}^{-\gamma}g$. Thus, by \eqref{eqM3} we have that
    $c_n^{\alpha,\beta}(f)
        = (\lambda_n^{\alpha,\beta})^{-\gamma} c_n^{\alpha,\beta}(g)$, $n \in \N$. Hence,
    according to Proposition~\ref{Prop:B1}, the series
    $$\sum_{n=0}^\infty (\lambda_n^{\alpha,\beta})^\gamma c_n^{\alpha,\beta}(f) \phi_n^{\alpha,\beta}
        = \sum_{n=0}^\infty  c_n^{\alpha,\beta}(g) \phi_n^{\alpha,\beta}$$
    converges in $L^{p(\cdot)}(0,\pi)$.

    Assume now that the series $F=\sum_{n=0}^\infty (\lambda_n^{\alpha,\beta})^\gamma c_n^{\alpha,\beta}(f) \phi_n^{\alpha,\beta}$
    converges in $L^{p(\cdot)}(0,\pi)$. Then, by Proposition~\ref{propM4}, $\mathcal{L}_{\alpha,\beta}^{-\gamma}F=f$
    and $f \in H_{\alpha,\beta}^{\gamma,p(\cdot)}(0,\pi)$.
\end{proof}

As an immediate consequence of Proposition~\ref{Prop:M5} we establish the density of $S_{\alpha,\beta}$ in $H_{\alpha,\beta}^{\gamma,p(\cdot)}(0,\pi)$.

\begin{Cor}\label{Cor:M6}
    Let $\gamma>0$ and $\alpha,\beta \geq -1/2$ such that $\alpha + \beta \neq -1$.
    Assume that $p \in{\mathcal B}(0,\pi)$.
    Then, for every
    $f \in H_{\alpha,\beta}^{\gamma,p(\cdot)}(0,\pi)$,
    $$f = \lim_{n \to \infty} \sum_{k=0}^n  c_k^{\alpha,\beta}(f) \phi_k^{\alpha,\beta}, $$
    in the sense of convergence in $H_{\alpha,\beta}^{\gamma,p(\cdot)}(0,\pi)$.
\end{Cor}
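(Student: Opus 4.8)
The goal is Corollary~\ref{Cor:M6}: for every $f \in H_{\alpha,\beta}^{\gamma,p(\cdot)}(0,\pi)$, the partial sums $\sum_{k=0}^n c_k^{\alpha,\beta}(f) \phi_k^{\alpha,\beta}$ converge to $f$ in the $H_{\alpha,\beta}^{\gamma,p(\cdot)}$-norm. The plan is to pull the statement back through the isometry furnished by Proposition~\ref{Prop:M5}: by definition of the potential space, $f = \mathcal{L}_{\alpha,\beta}^{-\gamma}g$ for a unique $g \in L^{p(\cdot)}(0,\pi)$, and the norm of $f$ in $H_{\alpha,\beta}^{\gamma,p(\cdot)}$ equals $\|g\|_{L^{p(\cdot)}(0,\pi)}$.

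First I would compute the $H_{\alpha,\beta}^{\gamma,p(\cdot)}$-norm of the tail $f - \sum_{k=0}^n c_k^{\alpha,\beta}(f) \phi_k^{\alpha,\beta}$ by applying Proposition~\ref{Prop:M5} to this difference. The key algebraic point is that the Jacobi coefficients of the partial sum $\sum_{k=0}^n c_k^{\alpha,\beta}(f) \phi_k^{\alpha,\beta}$ are exactly $c_k^{\alpha,\beta}(f)$ for $0 \leq k \leq n$ and $0$ otherwise, by orthonormality of $\{\phi_k^{\alpha,\beta}\}$. Using the relation $c_n^{\alpha,\beta}(f) = (\lambda_n^{\alpha,\beta})^{-\gamma} c_n^{\alpha,\beta}(g)$ established inside the proof of Proposition~\ref{Prop:M5}, the expression $\sum_{n} (\lambda_n^{\alpha,\beta})^\gamma c_n^{\alpha,\beta}(\cdot) \phi_n^{\alpha,\beta}$ applied to the tail collapses to $\sum_{k=n+1}^\infty c_k^{\alpha,\beta}(g) \phi_k^{\alpha,\beta}$. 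Therefore
\begin{equation*}
    \Big\| f - \sum_{k=0}^n c_k^{\alpha,\beta}(f) \phi_k^{\alpha,\beta} \Big\|_{H_{\alpha,\beta}^{\gamma,p(\cdot)}(0,\pi)}
        = \Big\| g - \sum_{k=0}^n c_k^{\alpha,\beta}(g) \phi_k^{\alpha,\beta} \Big\|_{L^{p(\cdot)}(0,\pi)}.
\end{equation*}

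The proof is then finished by invoking Proposition~\ref{Prop:B1}$(ii)$, which guarantees precisely that the partial sums $\sum_{k=0}^n c_k^{\alpha,\beta}(g) \phi_k^{\alpha,\beta}$ converge to $g$ in $L^{p(\cdot)}(0,\pi)$ whenever $p \in \mathcal{B}(0,\pi)$; hence the right-hand side tends to $0$ as $n \to \infty$, giving the claimed convergence. I do not anticipate a serious obstacle here, since all the heavy lifting has already been done: the real content is the uniform boundedness and convergence of the partial-sum operators in the variable exponent Lebesgue space, which is Proposition~\ref{Prop:B1}$(ii)$, and the isometric identification of the tail norms, which is immediate from Proposition~\ref{Prop:M5}. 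The one small point to state carefully is that the difference $f - \sum_{k=0}^n c_k^{\alpha,\beta}(f) \phi_k^{\alpha,\beta}$ indeed lies in $H_{\alpha,\beta}^{\gamma,p(\cdot)}(0,\pi)$ so that Proposition~\ref{Prop:M5} applies to it; this follows because its image under $\mathcal{L}_{\alpha,\beta}^{\gamma}$ (equivalently, the series with coefficients $(\lambda_n^{\alpha,\beta})^\gamma c_n^{\alpha,\beta}$) is the convergent series above, so the membership criterion of Proposition~\ref{Prop:M5} is met.
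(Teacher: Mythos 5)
Your proposal is correct and follows essentially the same route as the paper: both identify $f-\sum_{k=0}^n c_k^{\alpha,\beta}(f)\phi_k^{\alpha,\beta}$ with $\mathcal{L}_{\alpha,\beta}^{-\gamma}$ applied to $g-\sum_{k=0}^n c_k^{\alpha,\beta}(g)\phi_k^{\alpha,\beta}$ via the relation $(\lambda_k^{\alpha,\beta})^\gamma c_k^{\alpha,\beta}(f)=c_k^{\alpha,\beta}(g)$, so that the $H_{\alpha,\beta}^{\gamma,p(\cdot)}$-norm of the tail equals the $L^{p(\cdot)}$-norm of the tail of $g$, which vanishes by the partial-sum convergence of Proposition~\ref{Prop:B1}$(ii)$ (equivalently, by the series representation of $g$ in Proposition~\ref{Prop:M5}).
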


\begin{proof}
    Let $f \in H_{\alpha,\beta}^{\gamma,p(\cdot)}(0,\pi)$. We have that $f=\mathcal{L}_{\alpha,\beta}^{-\gamma}g$, where
    $$g = \sum_{k=0}^\infty (\lambda_k^{\alpha,\beta})^\gamma  c_k^{\alpha,\beta}(f) \phi_k^{\alpha,\beta}, $$
    in the sense of convergence in $L^{p(\cdot)}(0,\pi)$. Then,
    $$\Big\| f - \sum_{k=0}^n c_k^{\alpha,\beta}(f) \phi_k^{\alpha,\beta} \Big\|_{H_{\alpha,\beta}^{\gamma,p(\cdot)}(0,\pi)}
        =  \Big\| g - \sum_{k=0}^n (\lambda_k^{\alpha,\beta})^\gamma c_k^{\alpha,\beta}(f) \phi_k^{\alpha,\beta} \Big\|_{L^{p(\cdot)}(0,\pi)}
        \longrightarrow 0, \quad \text{as } n \to \infty.$$
\end{proof}

%\newpage
%%%%%%%%%%%%%%%%%%%%%%%%%%%%%%%%%%%%%%%%%%%%%%%%%%%%%%%%%%%%%%%%%%%%%%%%%%%%%%%%%%
\section{Boundedness of the higher order Riesz transforms} \label{sec:4}
%%%%%%%%%%%%%%%%%%%%%%%%%%%%%%%%%%%%%%%%%%%%%%%%%%%%%%%%%%%%%%%%%%%%%%%%%%%%%%%%%%

This section has to do with the proof of assertions (b) and (c) of Section \ref{sec:intro}.

Firstly, we establish that $R_{\alpha,\beta}^k$ and $R_{\alpha,\beta}^{k,*}$ are composition of Jacobi Riesz transforms of order one.

\begin{Lem}\label{lemT1}
    Let $k\in\Bbb N$ and $\alpha,\beta\geq -1/2$ such that $\alpha+\beta \neq -1$. Then,
    \begin{equation}\label{eqT3}
        R_{\alpha,\beta}^kf
            =R_{\alpha+k-1,\beta+k-1}^1\circ R_{\alpha+k-2,\beta+k-2}^1\circ.....\circ R_{\alpha,\beta}^1f, \quad f\in S_{\alpha,\beta},
    \end{equation}
    and
    \begin{equation}\label{eqT4}
        R_{\alpha,\beta}^{k,*}f
            =R_{\alpha,\beta}^{1,*}\circ R_{\alpha+1,\beta+1}^{1,*}\circ.....\circ R_{\alpha+k-1,\beta+k-1}^{1,*}f, \quad f\in S_{\alpha+k,\beta+k}.
    \end{equation}
\end{Lem}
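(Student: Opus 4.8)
The plan is to verify both identities on the orthonormal basis of eigenfunctions. Since every operator occurring in \eqref{eqT3} and \eqref{eqT4} is linear, and $S_{\alpha,\beta}=\mathrm{span}\{\phi_n^{\alpha,\beta}\}_{n\in\N}$, $S_{\alpha+k,\beta+k}=\mathrm{span}\{\phi_n^{\alpha+k,\beta+k}\}_{n\in\N}$, it is enough to check \eqref{eqT3} on a single $\phi_n^{\alpha,\beta}$ and \eqref{eqT4} on a single $\phi_n^{\alpha+k,\beta+k}$, computing the action of each side and matching the resulting scalars.

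The computation rests on two facts. The first is the ladder action of the first order operators: specializing \eqref{eqS1} to $\ell=1$ gives
$$D_{\alpha,\beta}\phi_n^{\alpha,\beta}=-\sqrt{n(n+\alpha+\beta+1)}\,\phi_{n-1}^{\alpha+1,\beta+1},\quad n\geq1,$$
with $D_{\alpha,\beta}\phi_0^{\alpha,\beta}=0$, and, taking $L^2(0,\pi)$-adjoints and using the orthonormality of $\{\phi_n^{\alpha,\beta}\}_n$ and $\{\phi_n^{\alpha+1,\beta+1}\}_n$,
$$D_{\alpha,\beta}^*\phi_m^{\alpha+1,\beta+1}=-\sqrt{(m+1)(m+\alpha+\beta+2)}\,\phi_{m+1}^{\alpha,\beta},\quad m\in\N.$$
The second is the eigenvalue identity $\lambda_{n-j}^{\alpha+j,\beta+j}=\lambda_n^{\alpha,\beta}$ for $0\leq j\leq n$, which is immediate from $\lambda_n^{\alpha,\beta}=(n+\tfrac{\alpha+\beta+1}{2})^2$ since raising $\alpha,\beta$ by $j$ and lowering $n$ by $j$ leaves the argument unchanged.

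For \eqref{eqT3}, I would apply $R_{\alpha+j,\beta+j}^1=D_{\alpha+j,\beta+j}\mathcal{L}_{\alpha+j,\beta+j}^{-1/2}$ successively along the lowering chain $\phi_n^{\alpha,\beta}\to\phi_{n-1}^{\alpha+1,\beta+1}\to\cdots\to\phi_{n-k}^{\alpha+k,\beta+k}$. By the ladder formula and the eigenvalue identity, the $j$-th step ($j=0,\dots,k-1$) contributes the scalar $-(\lambda_n^{\alpha,\beta})^{-1/2}\sqrt{(n-j)(n+\alpha+\beta+j+1)}$, so the whole composition equals $(-1)^k(\lambda_n^{\alpha,\beta})^{-k/2}$ times a product of square roots. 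Using $\prod_{j=0}^{k-1}(n-j)=(n-k+1)_k$ and $\prod_{j=0}^{k-1}(n+\alpha+\beta+1+j)=(n+\alpha+\beta+1)_k$, this matches the direct evaluation $R_{\alpha,\beta}^k\phi_n^{\alpha,\beta}=(\lambda_n^{\alpha,\beta})^{-k/2}\mathbb{D}_{\alpha,\beta}^k\phi_n^{\alpha,\beta}$, where \eqref{eqS1} yields $\mathbb{D}_{\alpha,\beta}^k\phi_n^{\alpha,\beta}=(-1)^k\sqrt{(n-k+1)_k(n+\alpha+\beta+1)_k}\,\phi_{n-k}^{\alpha+k,\beta+k}$.

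For \eqref{eqT4}, I would run the mirror computation along the raising chain $\phi_n^{\alpha+k,\beta+k}\to\phi_{n+1}^{\alpha+k-1,\beta+k-1}\to\cdots\to\phi_{n+k}^{\alpha,\beta}$, using $R_{\alpha+j,\beta+j}^{1,*}=D_{\alpha+j,\beta+j}^*\mathcal{L}_{\alpha+j+1,\beta+j+1}^{-1/2}$ together with the adjoint ladder formula, and compare with $R_{\alpha,\beta}^{k,*}\phi_n^{\alpha+k,\beta+k}=(\lambda_n^{\alpha+k,\beta+k})^{-k/2}\mathbb{D}_{\alpha,\beta}^{k,*}\phi_n^{\alpha+k,\beta+k}$; here the action of $\mathbb{D}_{\alpha,\beta}^{k,*}=D_{\alpha,\beta}^*\circ\cdots\circ D_{\alpha+k-1,\beta+k-1}^*$ on $\phi_n^{\alpha+k,\beta+k}$ follows from \eqref{eqS1} by duality and equals $(-1)^k\sqrt{(n+1)_k(n+k+\alpha+\beta+1)_k}\,\phi_{n+k}^{\alpha,\beta}$. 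The only delicate point is the bookkeeping of signs and Pochhammer symbols and the correct identification of $\mathbb{D}_{\alpha,\beta}^{k,*}$ as the reverse-ordered composition of the adjoints $D_{\alpha+j,\beta+j}^*$; once the eigenvalue identity is in hand, the remaining products telescope exactly as above, and I expect no substantive obstacle beyond this accounting.
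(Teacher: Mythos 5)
Your proposal is correct and follows essentially the same route as the paper: verify both identities on the eigenfunction basis, use the ladder formula of \cite[Lemma 3.1]{La} (equation \eqref{eqS1}/\eqref{eq:13.1} specialized to one step) together with the eigenvalue invariance $\lambda_{l}^{\alpha,\beta}=\lambda_{l-n}^{\alpha+n,\beta+n}$, and match the telescoping Pochhammer products. The only difference is cosmetic: the paper peels factors off the $k$-step formula to recognize the composition and dispatches \eqref{eqT4} as "similar," whereas you build the composition up one step at a time and also write out the adjoint ladder computation explicitly.
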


\begin{proof}
    We are going to prove (\ref{eqT3}), (\ref{eqT4}) can be shown in a similar way.
    It is sufficient to see that (\ref{eqT3}) is true when $f={\phi}_l^{\alpha,\beta}$, for every $l\in\Bbb N$.

    Let $l\in\Bbb N$. According to \cite[Lemma 3.1]{La} we have that
    \begin{equation}\label{eq:13.1}
        {\mathbb{D}}_{\alpha,\beta}^k{\phi}_l^{\alpha ,\beta}
            =(-1)^k\sqrt{(l-k+1)_k(l+\alpha+\beta+1)_k} \ {\phi}_{l-k}^{\alpha+k ,\beta+k}.
    \end{equation}
    Recall the definition of the Pochhammer symbol in \eqref{eq:9.1} and by convention ${\phi}_n^{\alpha,\beta}=0$, $n\in\Bbb Z$, $n<0$. Hence,
    \begin{equation*}\label{eqT5}
        R_{\alpha,\beta}^k{\phi}_l^{\alpha ,\beta}
            =(-1)^k \sqrt{\frac{(l-k+1)_k(l+\alpha+\beta+1)_k}{({\lambda}_l^{\alpha,\beta})^{k}}} \ {\phi}_{l-k}^{\alpha+k ,\beta+k}.
    \end{equation*}
    Since ${\lambda}_l^{\alpha,\beta}={\lambda}_{l-n}^{\alpha+n,\beta+n}$, $0\leq n\leq l$, we can write
    \begin{align*}
        R_{\alpha,\beta}^k{\phi}_l^{\alpha ,\beta}
            & = (-1)^k\prod_{n=0}^{k-1} \sqrt{ \frac{(l-n)(l+\alpha+\beta+1+n)}{{\lambda}_{l-n}^{\alpha+n,\beta+n}}} \ {\phi}_{l-k}^{\alpha+k ,\beta+k} \\
            & = (-1)^{k-1}\prod_{n=0}^{k-2} \sqrt{\frac{(l-n)(l+\alpha+\beta+1+n)}{\lambda_{l-n}^{\alpha+n,\beta+n}}} \ R_{\alpha+k-1,\beta+k-1}^1{\phi}_{l-k+1}^{\alpha+k-1 ,\beta+k-1} \\
            & =R_{\alpha+k-1,\beta+k-1}^1\circ R_{\alpha+k-2,\beta+k-2}^1\circ.....\circ R_{\alpha,\beta}^1{\phi}_l^{\alpha ,\beta},
    \end{align*}
    and (\ref{eqT3}) is established.
\end{proof}

We are going to prove that $R_{\alpha,\beta}^k$ and $R_{\alpha,\beta}^{k,*}$ define bounded operators
from $L^p_{w}(0,\pi)$ into itself for every $1<p<\infty$ and $w\in A_p(0,\pi)$.
As consecuence of the next lemma, we only need  to study the corresponding local operators (see \cite{BMTU} and \cite{CNS}).

We consider the domain $\mathcal{D}=\displaystyle\cup_{j=1}^4 \mathcal{D}_j$ represented in the figure bellow

\begin{figure}[h!]
    \begin{minipage}{7.5cm}
        \begin{center}
            \begin{tikzpicture}[scale=1.5]
                \draw[->,thick] (-0.2,0) -- (3.7,0) node[below] {$\theta$};
                    \draw[->,thick] (0,-0.2) -- (0,3.7) node[left] {$\varphi$};

                    \draw[dashed] (0,pi) -- (pi,pi);
                    \draw[dashed] (pi,0) -- (pi,pi);
                    \draw[dashed] (0,0) -- (pi,pi);
                    \draw[dashed] (pi/2,0) -- (pi/2,pi);

                    \draw[dashed,thick] (0,0) -- (pi/2,pi/4);
                \draw[dashed,thick] (pi/2,pi/4) -- (pi,pi);
                \draw[dashed,thick] (0,0) -- (pi/2,3*pi/4);
                \draw[dashed,thick] (pi/2,3*pi/4) -- (pi,pi);

                \draw[-,thick] (pi, -0.05) -- (pi, 0.05);
                \node at (pi, -0.2) {$\pi$};
                \draw[-,thick] (pi/2, -0.05) -- (pi/2, 0.05);
                \node at (pi/2, -0.25) {$\frac{\pi}{2}$};

                \draw[-,thick] (-0.05,pi) -- (0.05,pi);
                \node at (-0.2,pi) {$\pi$};
                \draw[-,thick] (-0.05,pi/4) -- (0.05,pi/4);
                \node at (-0.2,pi/4) {$\frac{\pi}{4}$};
                \draw[-,thick] (-0.05,pi/2) -- (0.05,pi/2);
                \node at (-0.2,pi/2) {$\frac{\pi}{2}$};
                \draw[-,thick] (-0.05,3*pi/4) -- (0.05,3*pi/4);
                \node at (-0.2,3*pi/4) {$\frac{3\pi}{4}$};

                \node at (1.175,0.25) {$\mathcal{D}_1$};
                \node at (0.75,2.25) {$\mathcal{D}_2$};
                \node at (2.4,0.75) {$\mathcal{D}_3$};
                \node at (1.9,2.85) {$\mathcal{D}_4$};
            \end{tikzpicture}
        \end{center}
    \end{minipage}
    \begin{minipage}{7.5cm}
        \begin{align*}
            & \mathcal{D}_1=\Big\{(\theta,\varphi)\ : \ 0<\varphi <\frac{\theta}{2}, \ 0 < \theta < \frac{\pi}{2} \Big\}, \\
            & \mathcal{D}_2=\Big\{(\theta,\varphi)\ : \ 0< \frac{3\theta}{2}<\varphi<\pi, \ 0 < \theta < \frac{\pi}{2}\Big\}, \\
            & \mathcal{D}_3=\Big\{(\theta,\varphi)\ : \ 0<\varphi <\frac{3\theta-\pi}{2}, \ \frac{\pi}{2}<\theta<\pi\Big\}, \\
            & \mathcal{D}_4=\Big\{(\theta,\varphi)\ : \  \frac{\theta+\pi}{2}<\varphi<\pi, \ \frac{\pi}{2}<\theta<\pi\Big\}.
        \end{align*}
    \end{minipage}
\caption{Global regions}
\label{fig:regions}
\end{figure}

\begin{Lem}\label{lemT2}
    Suppose that $K:(0,\pi)\times(0,\pi)\backslash\{(\theta,\theta)\ : \ \theta\in(0,\pi)\}\  \longrightarrow \ \Bbb R$ is a measurable function such that
    $$|K(\theta,\varphi)|
        \leq{C\over{|\theta-\varphi|}}, \quad \theta,\varphi\in(0,\pi),\ \theta\neq\varphi.$$
    Then, for every $1<p<\infty$ and $w\in A_p(0,\pi)$ the operator $H$ defined by
    $$H f(\theta)
        =\int_0^\pi {K(\theta,\varphi) \chi_\mathcal{D}(\theta,\varphi) f(\varphi)d\varphi}, \quad \theta\in (0,\pi),$$
    is bounded from $L^p_{w}(0,\pi)$ into $L^p_{w}(0,\pi)$.
\end{Lem}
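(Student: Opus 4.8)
The plan is to decompose $H$ according to the four global regions, writing $H=\sum_{j=1}^4 H_j$ with $H_j f(\theta)=\int_0^\pi K(\theta,\varphi)\chi_{\mathcal{D}_j}(\theta,\varphi)f(\varphi)\,d\varphi$, and to show that on each $\mathcal{D}_j$ the diagonal bound $|K(\theta,\varphi)|\le C/|\theta-\varphi|$ together with the geometry forces $H_j$ to be dominated by the one-dimensional averaging Hardy operator
$$A g(\theta)=\frac{1}{\theta}\int_0^\theta |g(\varphi)|\,d\varphi, \qquad \text{or its Lebesgue-adjoint}\qquad A^*g(\theta)=\int_\theta^\pi \frac{|g(\varphi)|}{\varphi}\,d\varphi.$$
First I would record the elementary distance estimates. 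On $\mathcal{D}_1$ the condition $\varphi<\theta/2$ gives $\theta-\varphi>\theta/2$, hence $|\theta-\varphi|\sim\theta$ and
$$|H_1 f(\theta)|\le \frac{C}{\theta}\int_0^{\theta/2}|f(\varphi)|\,d\varphi\le C\,A|f|(\theta).$$
On $\mathcal{D}_2$ the condition $\varphi>3\theta/2$ gives $\varphi-\theta>\varphi/3$, hence $|\theta-\varphi|\sim\varphi$ and
$$|H_2 f(\theta)|\le C\int_{3\theta/2}^\pi \frac{|f(\varphi)|}{\varphi}\,d\varphi\le C\,A^*|f|(\theta).$$

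Next I would reduce $\mathcal{D}_3$ and $\mathcal{D}_4$ to the previous two cases by the reflection $\theta\mapsto\pi-\theta$. Setting $u=\pi-\theta$ and $v=\pi-\varphi$, a direct check shows that this change of variables maps $\mathcal{D}_3$ onto $\mathcal{D}_2$ and $\mathcal{D}_4$ onto $\mathcal{D}_1$, while $|\theta-\varphi|=|u-v|$ is preserved; consequently the reflected kernel still satisfies the $C/|u-v|$ bound. Since $\theta\mapsto\pi-\theta$ is a measure-preserving isometry of $(0,\pi)$, it preserves the class $A_p(0,\pi)$, so the $L^p_w$-boundedness of $H_3$ and $H_4$ follows from that of $H_2$ and $H_1$ applied with the reflected weight $w(\pi-\cdot)\in A_p(0,\pi)$.

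It then remains to prove that $A$ and $A^*$ are bounded on $L^p_w(0,\pi)$ for every $1<p<\infty$ and $w\in A_p(0,\pi)$. For $A$ this is immediate: for $f\ge 0$ the interval $(0,2\theta)$ is centered at $\theta$ and contains $(0,\theta)$, whence $A f(\theta)\le 2\mathcal{M}f(\theta)$ pointwise, and $\mathcal{M}$ is bounded on $L^p_w(0,\pi)$ by the Muckenhoupt theorem since $w\in A_p(0,\pi)$. For $A^*$ I would invoke duality: $A^*$ is the Lebesgue-adjoint of $A$, so its boundedness on $L^p_w(0,\pi)$ is equivalent to the boundedness of $A$ on $L^{p'}_{w^{1-p'}}(0,\pi)$; since $w\in A_p(0,\pi)$ is equivalent to $w^{1-p'}\in A_{p'}(0,\pi)$, the latter is already covered by the previous case. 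Assembling the four pieces gives the claim.

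The only genuinely technical point is the passage from the diagonal bound on $K$ to the Hardy structure, i.e. verifying the separation estimates $|\theta-\varphi|\sim\theta$ on $\mathcal{D}_1$, $|\theta-\varphi|\sim\varphi$ on $\mathcal{D}_2$, and their reflected analogues; these are elementary but must be carried out region by region. Everything else reduces either to the classical $L^p_w$-boundedness of $\mathcal{M}$ for $A_p$ weights or to a duality argument, so I do not expect any serious obstacle.
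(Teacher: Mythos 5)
Your proof is correct, and it follows the same basic strategy as the paper: split $H$ into the four pieces $H_j$ according to the regions $\mathcal{D}_1,\dots,\mathcal{D}_4$, and combine pointwise domination by maximal-type operators with a duality argument. The differences are organizational. The paper treats $\mathcal{D}_1$ and $\mathcal{D}_4$ directly via the pointwise bound $|H_jf|\leq C\mathcal{M}(f)$, and handles $\mathcal{D}_2$ and $\mathcal{D}_3$ by writing out the adjoints $H_2^*$ and $H_3^*$ explicitly (which requires re-describing each region from the $\varphi$-side) and bounding each adjoint by $\mathcal{M}$ plus a trivial off-diagonal term, using the embedding $L^p_w(0,\pi)\subseteq L^1(0,\pi)$. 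You instead (i) exploit the symmetry $(\theta,\varphi)\mapsto(\pi-\theta,\pi-\varphi)$, which maps $\mathcal{D}_3$ onto $\mathcal{D}_2$ and $\mathcal{D}_4$ onto $\mathcal{D}_1$ while preserving both the kernel bound and the class $A_p(0,\pi)$, thereby halving the casework; and (ii) dominate $H_2$ pointwise by the model operator $A^*g(\theta)=\int_\theta^\pi \varphi^{-1}|g(\varphi)|\,d\varphi$ and deduce its weighted boundedness by abstract duality ($A^*$ is the Lebesgue adjoint of the Hardy averaging operator $A$, and $w\in A_p(0,\pi)$ if and only if $w^{1-p'}\in A_{p'}(0,\pi)$), rather than computing the adjoint of $H_2$ itself. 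Both routes rest on exactly the same two ingredients, Muckenhoupt's maximal theorem and $A_p$ duality, so the gain is economy rather than generality: your version involves less case-checking, while the paper's explicit adjoints keep everything at the level of the operators $H_j$ and avoid invoking the identification of $(L^p_w(0,\pi))^*$ with $L^{p'}_{w^{1-p'}}(0,\pi)$.
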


\begin{proof}
    We define
    $$H_j f(\theta)
        =\int_0^\pi K(\theta,\varphi) \chi_{\mathcal{D}_j}(\theta,\varphi) f(\varphi) d\varphi, \quad \theta\in (0,\pi),\ j=1,2,3,4.$$
    Thus, $\displaystyle H=\sum_{j=1}^4H_j$.

    By ${\mathcal M}$ we denote the Hardy-Littlewood maximal function on $(0,\pi)$. We have that
    \begin{align*}
        |H_1 f(\theta)|
            & \leq \int_0^{\theta/2}{{|f(\varphi)|}\over{|\theta-\varphi|}}d\varphi
              \leq \frac{C}{\theta}\int_0^{\theta/2}|f(\varphi)|d\varphi
                \leq C{\mathcal M}(f)(\theta), \quad \theta\in (0,\pi),
    \end{align*}
    and
    \begin{align*}
        |H_4 f(\theta)|
            & \leq \int_{(\theta+\pi)/2}^{\pi}{{|f(\varphi)|}\over{|\theta-\varphi|}}d\varphi
              \leq {C \over{\pi-\theta}}  \int_{\pi-3(\pi-\theta)/2}^{\pi}|f(\varphi)|d\varphi
              \leq C{\mathcal M}(f)(\theta), \quad \theta\in (0,\pi).
    \end{align*}
    By using the classical maximal theorem we deduce that $H_1$ and $H_4$ are bounded from
    $L^p_{w}(0,\pi)$ into itself, for every $1<p<\infty$ and $w\in A_p(0,\pi)$.

    The adjoint operator $H_2^*$ of $H_2$ is defined by
    $$H_2^* g(\varphi)
        = \chi_{(0,\frac{3\pi}{4})}(\varphi)\int_0^{2\varphi/3}{K(\theta,\varphi)g(\theta)d\theta}
        + \chi_{(\frac{3\pi}{4},\pi)}(\varphi)\int_0^{\pi/2}{K(\theta,\varphi)g(\theta)d\theta}, \quad \varphi \in (0,\pi).$$
    If $1<p<\infty$ and $w\in A_p(0,\pi)$, we deduce that
    \begin{align*}
        \| H_2^* g \|_{L^p_{w}(0,\pi)}
            & \leq C\left\{ \left(\int_0^{3\pi/4}w(\varphi)\left(\int_0^{2\varphi/3}{{|g(\theta)|}\over{|\theta-\varphi|}}d\theta\right)^p d\varphi\right)^{1/p}
                + \left(\int_{3\pi/4}^{\pi}w(\varphi)\left(\int_0^{\pi/2}{{|g(\theta)|}\over{|\theta-\varphi|}}d\theta\right)^pd\varphi\right)^{1/p}\right\} \\
            & \leq C \left\{ \left(\int_0^{\pi}w(\varphi)|{\mathcal M}(|g|)(\varphi)|^pd\varphi\right)^{1/p}
                          + \left(\int_{3\pi/4}^{\pi}w(\varphi) d\varphi \right)^{1/p}\int_0^{\pi}|g(\theta)|d\theta\right\} \\
            & \leq C\|g\|_{L^p_{w}(0,\pi)}, \quad g\in L^p_{w}(0,\pi).
    \end{align*}
    Hence, $H_2$ is bounded from $L^p_{w}(0,\pi)$ into itself for every $1<p<\infty$ and $w\in A_p(0,\pi)$.
    On the other hand, the adjoint operator $H_3^*$ of $H_3$ is given by
    $$H_3^* g(\varphi)
        =\chi_{(0,\frac{\pi}{4})}(\varphi)\int_{\pi/2}^{\pi}{K(\theta,\varphi)g(\theta)d\theta}
        +\chi_{(\frac{\pi}{4},\pi)}(\varphi)\int_{(2\varphi+\pi)/3}^{\pi}{K(\theta,\varphi)g(\theta)d\theta}.$$
    If $1<p<\infty$ and $w\in A_p(0,\pi)$, we get
    \begin{align*}
        \| H_3^* g \|_{L^p_{w}(0,\pi)}
            &\leq C \left\{ \left(\int_0^{\pi/4}w(\varphi)d\varphi\right)^{1/p}\int_0^{\pi}|g(\theta)|d\theta
                 + \left(\int_{\pi/4}^{\pi}w(\varphi)\left(\int_{(2\varphi+\pi)/3}^{\pi}{{|g(\theta)|}\over{|\theta-\varphi|}}d\theta\right)^pd\varphi\right)^{1/p}\right\} \\
            & \leq C\left\{\|g\|_{L^p_{w}(0,\pi)}
                    +\left(\int_{\pi/4}^{\pi}w(\varphi)\left({1\over{\pi - \varphi}} \int_{\pi-4(\pi-\varphi)/3}^{\pi}|g(\theta)|d\theta\right)^pd\varphi\right)^{1/p}\right\} \\
            & \leq C \left(\|g\|_{L^p_{w}(0,\pi)}+\|{\mathcal M}(|g|)\|_{L^p_{w}(0,\pi)}\right)
              \leq C\|g\|_{L^p_{w}(0,\pi)}, \quad g\in L^p_{w}(0,\pi).
    \end{align*}
    We conclude that $H_3$ is bounded from $L^p_{w}(0,\pi)$ into itself, for every $1<p<\infty$ and $w\in A_p(0,\pi)$.

    Thus, the proof of this lemma is finished.
\end{proof}

By using Lemmas \ref{lemT1} an \ref{lemT2} we will deduce the $L^p_{w}(0,\pi)$-boundedness of
$R_{\alpha,\beta}^m$ and $R_{\alpha,\beta}^{m,*}$ from the corresponding property of $R_{\alpha,\beta}^1$ and $R_{\alpha,\beta}^{1,*}$, respectively.

\begin{Prop}\label{propT2}
    Let $1<p<\infty$, $w\in A_p(0,\pi)$ and $\alpha,\beta\geq -1/2$ such that $\alpha+\beta\neq -1$.
    The Jacobi Riesz transforms $R_{\alpha,\beta}^1$
    and $R_{\alpha,\beta}^{1,*}$ define bounded operators  from $L^p_{w}(0,\pi)$ into itself.
\end{Prop}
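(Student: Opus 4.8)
The plan is to prove the $L^p_w(0,\pi)$-boundedness of the first-order Jacobi-Riesz transforms by establishing a Calderón-Zygmund decomposition into a local and a global part. Recall that
$$R_{\alpha,\beta}^1 = \mathbb{D}_{\alpha,\beta}\mathcal{L}_{\alpha,\beta}^{-1/2}, \qquad R_{\alpha,\beta}^{1,*} = \mathbb{D}_{\alpha,\beta}^* \mathcal{L}_{\alpha+1,\beta+1}^{-1/2}.$$
Using the subordination formula and the spectral definition of $\mathcal{L}_{\alpha,\beta}^{-1/2}$, I would write each operator in integral form via the Jacobi-Poisson (or heat) semigroup kernel. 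Concretely, $\mathcal{L}_{\alpha,\beta}^{-1/2}$ has a kernel obtained by integrating $W_t^{\alpha,\beta}(\theta,\varphi)$ against $t^{-1/2}\,dt/\Gamma(1/2)$, and applying the differential operator $\mathbb{D}_{\alpha,\beta}$ under the integral sign produces a kernel $R_{\alpha,\beta}^1(\theta,\varphi)$. The first task is to obtain pointwise size and smoothness estimates on this kernel off the diagonal.

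The decisive structural observation, already signalled in the excerpt via the regions $\mathcal{D}_1,\dots,\mathcal{D}_4$ of Figure~\ref{fig:regions}, is that the operator splits into a \emph{global} part (supported on $\mathcal{D}=\cup_j\mathcal{D}_j$, where $\theta$ and $\varphi$ are genuinely separated) and a \emph{local} part (supported near the diagonal). For the global part I would verify the pointwise bound $|R_{\alpha,\beta}^1(\theta,\varphi)|\leq C/|\theta-\varphi|$ on $\mathcal{D}$, which comes from the Gaussian heat-kernel estimate \eqref{eqS2} together with careful control of the factors $(\sin\frac{\theta}{2})^{\pm(\alpha+1/2)}$, $(\cos\frac{\theta}{2})^{\pm(\beta+1/2)}$ that appear when $\mathbb{D}_{\alpha,\beta}$ hits the kernel. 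Once this size estimate is in hand, Lemma~\ref{lemT2} applies directly and yields $L^p_w(0,\pi)$-boundedness of the global part for all $1<p<\infty$ and $w\in A_p(0,\pi)$.

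For the local part, supported in a neighbourhood of the diagonal, the strategy is to show that $R_{\alpha,\beta}^1$ is a genuine Calderón-Zygmund operator with respect to the space of homogeneous type $((0,\pi),d\theta,|\cdot|)$. This requires the standard-kernel estimates
$$|R_{\alpha,\beta}^1(\theta,\varphi)|\leq \frac{C}{|\theta-\varphi|}, \qquad |\partial_\theta R_{\alpha,\beta}^1(\theta,\varphi)| + |\partial_\varphi R_{\alpha,\beta}^1(\theta,\varphi)| \leq \frac{C}{|\theta-\varphi|^2},$$
valid for $\theta,\varphi$ close but distinct, which again follow from differentiating under the subordination integral and using the Gaussian bounds and their derivatives. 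Since the $L^2(0,\pi)$-boundedness of $R_{\alpha,\beta}^1$ is immediate from Plancherel (the multiplier $\sqrt{\lambda_l(l+\alpha+\beta+1)/\lambda_l^{\alpha,\beta}}$ appearing in \eqref{eq:13.1} is bounded), the Calderón-Zygmund theory on spaces of homogeneous type then gives boundedness on $L^p_w(0,\pi)$ for every $1<p<\infty$ and $w\in A_p(0,\pi)$. Combining the local and global estimates finishes the proof for $R_{\alpha,\beta}^1$, and the argument for $R_{\alpha,\beta}^{1,*}$ is entirely parallel, using the adjoint differential operator $\mathbb{D}_{\alpha,\beta}^*$ and the shifted operator $\mathcal{L}_{\alpha+1,\beta+1}$.

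I expect the main obstacle to be the derivation of the kernel estimates near the diagonal, and in particular tracking the singular weight factors $(\sin\frac{\theta}{2})^{-\alpha-1/2}$, $(\cos\frac{\theta}{2})^{-\beta-1/2}$ that $\mathbb{D}_{\alpha,\beta}$ introduces when $\alpha$ or $\beta$ is close to $-1/2$; one must check that these do not destroy the $|\theta-\varphi|^{-1}$ and $|\theta-\varphi|^{-2}$ bounds once the Gaussian factor $e^{-c(\theta-\varphi)^2/t}$ is integrated in $t$. A secondary technical point is justifying the differentiation under the integral sign and the convergence of the subordination integral near $t=0$, which is where the cancellation encoded in the operator $\mathbb{D}_{\alpha,\beta}$ acting on the kernel must be exploited. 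Once these pointwise estimates are secured, the functional-analytic conclusions are routine applications of Lemma~\ref{lemT2} and classical Calderón-Zygmund theory.
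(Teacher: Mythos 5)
Your proposal reproduces the paper's architecture almost exactly: $L^2$-boundedness via Plancherel, a kernel obtained by applying $D_{\alpha,\beta}$ under a subordination integral, the size bound $|R^1_{\alpha,\beta}(\theta,\varphi)|\leq C/|\theta-\varphi|$ fed into Lemma~\ref{lemT2} to dispose of the global region $\mathcal{D}$ of Figure~\ref{fig:regions}, and local Calder\'on--Zygmund theory (size plus gradient bounds near the diagonal) for the remaining part. The genuine gap is in the tool you propose for producing the kernel estimates, which is where all the analytic content of the paper's proof lives. You plan to derive both \eqref{eqT6} and \eqref{eqT7} from the Gaussian heat-kernel bound \eqref{eqS2} ``together with careful control'' of the weight factors, but \eqref{eqS2} cannot do this work, for two reasons. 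First, it bounds $W_t^{\alpha,\beta}(\theta,\varphi)$ itself and gives no information about $\partial_\theta W_t^{\alpha,\beta}(\theta,\varphi)$, which is exactly what $D_{\alpha,\beta}$ produces; an upper bound on a kernel never controls its derivative. Second, a pure Gaussian majorant erases the boundary decay encoded in the factors $\big(\sin\frac{\theta}{2}\sin\frac{\varphi}{2}\big)^{\alpha+1/2}\big(\cos\frac{\theta}{2}\cos\frac{\varphi}{2}\big)^{\beta+1/2}$, and it is precisely this decay that must absorb the singular coefficients $\cot\frac{\theta}{2}$ and $\tan\frac{\theta}{2}$ of $D_{\alpha,\beta}$: for instance, in the region $\mathcal{D}_2$ one has $\cot\frac{\theta}{2}\sim 2/\theta$ while $1/|\theta-\varphi|$ stays bounded, so the size bound alone multiplied by the coefficient does not give $C/|\theta-\varphi|^2$, and no Gaussian estimate will rescue this. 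You flag this as ``the main obstacle'' but supply no mechanism to overcome it, so the proposal defers rather than proves the decisive step.

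What the paper actually uses is not a Gaussian bound at all but the explicit Nowak--Sj\"ogren product representation \eqref{eq:15.1} of the Jacobi--Poisson kernel, together with the derivative and integral estimates of \cite{NoSj2} (their Lemmas 4.4, 4.7 and 4.8) and the elementary trigonometric inequalities \eqref{eq:17.1}--\eqref{eq:17.4}; these yield the size bound \eqref{eq:16.1} on all of $(0,\pi)^2$ and the gradient bound \eqref{eqT7} \emph{only} on the local region $(0,\pi)^2\setminus\mathcal{D}$, which is exactly why the size-only Lemma~\ref{lemT2} is indispensable on $\mathcal{D}$. One also needs \cite[Theorem 2.4]{CNZ} to identify $R^1_{\alpha,\beta}$ with the principal-value integral of this kernel, a point your proposal passes over. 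Finally, a smaller structural difference: for $R^{1,*}_{\alpha,\beta}$ the paper does not run a ``parallel'' kernel analysis as you suggest, but shows by integration by parts and the spectral identities that $R^{1,*}_{\alpha,\beta}$ is the adjoint of $R^1_{\alpha,\beta}$, whence weighted boundedness for all $1<p<\infty$ and $w\in A_p(0,\pi)$ follows by duality; this is both shorter and avoids re-deriving every estimate with shifted parameters $(\alpha+1,\beta+1)$.
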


We are going to use local Calder\'on-Zygmund theory for singular integrals (see \cite{CNS}).
We are inspired in the arguments developed by Nowak and Sj\"{o}gren in \cite{NoSj2}.

\begin{proof}[Proof of Proposition \ref{propT2}; the case of $R_{\alpha,\beta}^1$]
    By \eqref{eq:13.1} we have that
    $$R_{\alpha,\beta}^1 f
        =-\sum_{k=0}^{\infty} \sqrt{\frac{k(k+\alpha+\beta+1)}{\lambda_{k}^{\alpha,\beta}}}
                \ c_k^{\alpha,\beta}(f) \ {\phi}_{k-1}^{\alpha+1 ,\beta+1}, \quad f\in L^2(0,\pi).$$
    According to Plancherel's theorem, $R_{\alpha,\beta}^1$ is bounded from $L^2(0,\pi)$ into itself. By using \cite[Theorem 2.4]{ CNZ} we can write
    $$R_{\alpha,\beta}^1 f(\theta)
        =\lim_{\varepsilon\rightarrow 0^+}\int_{0, \ |\theta-\varphi|>\varepsilon}^{\pi}{R_{\alpha,\beta}^1(\theta,\varphi)f(\varphi)d\varphi},\ \ a.e.\ \theta\in (0,\pi),$$
    for every $f\in C_c^{\infty}(0,\pi)$. Here the kernel
    $R_{\alpha,\beta}^1(\theta,\varphi)$ is defined by
    $$R_{\alpha,\beta}^1(\theta,\varphi)
        = \int_0^{\infty} D_{\alpha,\beta} P_t^{\alpha,\beta}(\theta,\varphi)dt, \quad \theta,\varphi\in (0,\pi),\ \theta\neq\varphi.$$
    According to \cite[Theorem 2.4]{CNS} and Lemma \ref{lemT2}, to prove that $R_{\alpha,\beta}^1$ is bounded from
    $L^p_{w}(0,\pi)$ into itself, it is enough to show that
    \begin{equation}\label{eqT6}
        |R_{\alpha,\beta}^1(\theta,\varphi)|
            \leq{C\over{|\theta-\varphi|}}, \quad \theta,\varphi\in (0,\pi),\ \theta\neq\varphi,
    \end{equation}
    and
    \begin{equation}\label{eqT7}
        |\partial_{\theta}R_{\alpha,\beta}^1(\theta,\varphi)|+|\partial_{\varphi}R_{\alpha,\beta}^1(\theta,\varphi)|
            \leq{C\over{|\theta-\varphi|^2}}, \quad (\theta,\varphi)\in (0,\pi)^2\backslash \mathcal{D},\ \theta\neq\varphi,
    \end{equation}
    where $\mathcal{D}$ is the domain in Figure~\ref{fig:regions}.

    According to \cite[Proposition 4.1]{NoSj2} and \cite[(3)]{NoSj3} we have that for every $\theta,\varphi\in (0,\pi)$ and $t>0$,
    \begin{equation}\label{eq:15.1}
        P_t^{\alpha,\beta}(\theta,\varphi)
            =C_{\alpha,\beta}\left(\sin{{\theta}\over 2}\sin{{\varphi}\over 2}\right)^{\alpha+1/2}\left(\cos{{\theta}\over 2}\cos{{\varphi}\over 2}\right)^{\beta+1/2}\sinh{t\over 2}\int_{-1}^1\int_{-1}^1{{d\Pi_{\alpha}(u)d\Pi_{\beta}(v)}\over{(\cosh{t\over 2}-1+q(\theta,\varphi,u,v))^{\alpha+\beta+2}}},
    \end{equation}
    where
    $C_{\alpha,\beta}={{2^{-\alpha-\beta-1}}\over{\int_0^{\pi}(\sin{{\theta}\over 2})^{2\alpha+1}(\cos{{\theta}\over 2})^{2\beta+1}d\theta}}$,
    $d\Pi_{\alpha}(u)={{\Gamma(\alpha+1)}\over{\sqrt{\pi}\Gamma(\alpha+1/2)}}(1-u^2)^{\alpha-1/2}du$,
    and
    $$q(\theta,\varphi,u,v)=1-u\sin{{\theta}\over 2}\sin{{\varphi}\over 2}-v\cos{{\theta}\over 2}\cos{{\varphi}\over 2}.$$

    By proceeding as in \cite[Proof of Theorem 2.4; the case of $R_1^{\alpha,\beta}$]{NoSj2} and using
    \cite[Lemma 4.4 and trigonometric identities in p. 738]{NoSj2} we get that
    \begin{align}\label{eq:16.1}
        & |R_{\alpha,\beta}^1(\theta,\varphi)| \nonumber\\
        & \quad\leq C\int_0^{\infty}\sinh{t\over 2}\int_{-1}^1\int_{-1}^1{{\left(\sin{{\theta}\over 2}\sin{{\varphi}\over 2}\right)^{\alpha+1/2}\left(\cos{{\theta}\over 2}\cos{{\varphi}\over 2}\right)^{\beta+1/2}|\partial_{\theta}q(\theta,\varphi,u,v)|}\over{(\cosh{t\over 2}-1+q(\theta,\varphi,u,v))^{\alpha+\beta+3}}} d\Pi_{\alpha}(u)d\Pi_{\beta}(v)dt \nonumber \\
        & \quad \leq C \int_{-1}^1\int_{-1}^1{{\left(\sin{{\theta}\over 2}\sin{{\varphi}\over 2}\right)^{\alpha+1/2}\left(\cos{{\theta}\over 2}\cos{{\varphi}\over 2}\right)^{\beta+1/2}}\over{q^{\alpha+\beta+3/2}(\theta,\varphi,u,v)}}d\Pi_{\alpha}(u)d\Pi_{\beta}(v) \nonumber \\
        & \quad \leq C \int_{-1}^1{{\left(\sin{{\theta}\over 2}\sin{{\varphi}\over 2}\right)^{\alpha+1/2}\left(\cos{{\theta}\over 2}\cos{{\varphi}\over 2}\right)^{\beta+1/2} d\Pi_{\alpha}(u)}\over{(1-u\sin{{\theta}\over 2}\sin{{\varphi}\over 2})^{\beta+1/2}(1-u\sin{{\theta}\over 2}\sin{{\varphi}\over 2}-\cos{{\theta}\over 2}\cos{{\varphi}\over 2})^{\alpha+1}}} \nonumber \\
        & \quad \leq C  {{\left(\cos{{\theta}\over 2}\cos{{\varphi}\over 2}\right)^{\beta+1/2}}\over{(1-\sin{{\theta}\over 2}\sin{{\varphi}\over 2})^{\beta+1/2}}}{1\over{(1-\sin{{\theta}\over 2}\sin{{\varphi}\over 2}-\cos{{\theta}\over 2}\cos{{\varphi}\over 2})^{1/2}}}{{\left(\sin{{\theta}\over 2}\sin{{\varphi}\over 2}\right)^{\alpha+1/2}}\over{\left(1-\cos{{\theta}\over 2}\cos{{\varphi}\over 2}\right)^{\alpha+1/2}}} \nonumber \\
        & \quad \leq C \left({{\cos{{\theta}\over 2}\cos{{\varphi}\over 2}}\over{1-\sin{{\theta}\over 2}\sin{{\varphi}\over 2}-\cos{{\theta}\over 2}\cos{{\varphi}\over 2}+\cos{{\theta}\over 2}\cos{{\varphi}\over 2}}}\right)^{\beta+1/2}{1\over{\left(1-\cos{{\theta-\varphi}\over 2}\right)^{1/2}}} \nonumber \\
        & \quad \leq C \left({{\cos{{\theta}\over 2}\cos{{\varphi}\over 2}}\over{1-\cos{{\varphi-\theta}\over 2}+\cos{{\theta}\over 2}\cos{{\varphi}\over 2}}}\right)^{\beta+1/2}{1\over{|\theta-\varphi|}}\leq {C\over{|\theta-\varphi|}}, \quad \theta,\varphi\in (0,\pi).
    \end{align}
    Then (\ref{eqT6}) is proved.

    Also, we have that
    \begin{align}\label{eq:c1}
         \partial_{\theta}R_{\alpha,\beta}^1(\theta,\varphi)
            =&\left({{2\alpha+1}\over 4}{{\cos{{\theta}\over 2}}\over{\sin{{\theta}\over 2}}}-{{2\beta+1}\over 4}{{\sin{{\theta}\over 2}}\over{\cos{{\theta}\over 2}}}\right)R_{\alpha,\beta}^1(\theta,\varphi) \nonumber \\
        & +\left(\sin{{\theta}\over 2}\sin{{\varphi}\over 2}\right)^{\alpha+1/2}\left(\cos{{\theta}\over 2}\cos{{\varphi}\over 2}\right)^{\beta+1/2}T_{\alpha,\beta}(\theta,\varphi), \quad \theta,\varphi\in (0,\pi),
    \end{align}
    where
    $$T_{\alpha,\beta}(\theta,\varphi)
        =C_{\alpha,\beta}\partial_{\theta}^2\int_0^{\infty}\sinh{t\over 2}\int_{-1}^1\int_{-1}^1{{d\Pi_{\alpha}(u)d\Pi_{\beta}(v)}\over{(\cosh{t\over 2}-1+q(\theta,\varphi,u,v))^{\alpha+\beta+2}}}dt, \quad \theta,\varphi\in (0,\pi).$$
    We can write by \cite[Lemma 4.7]{NoSj2} and proceeding as in \cite[Proof of Theorem 2.4; the case of $R_N^{\alpha,\beta}$]{NoSj2},
    \begin{align}\label{eq:c2}
        & \left|\left(\sin{{\theta}\over 2}\sin{{\varphi}\over 2}\right)^{\alpha+1/2}\left(\cos{{\theta}\over 2}\cos{{\varphi}\over 2}\right)^{\beta+1/2}T_{\alpha,\beta}(\theta,\varphi)\right| \nonumber\\
        & \quad \leq C \int_{-1}^1\int_{-1}^1{{\left(\sin{{\theta}\over 2}\sin{{\varphi}\over 2}\right)^{\alpha+1/2}\left(\cos{{\theta}\over 2}\cos{{\varphi}\over 2}\right)^{\beta+1/2}}\over{q(\theta,\varphi,u,v)^{\alpha+\beta+2}}}d\Pi_{\alpha}(u)d\Pi_{\beta}(v)
            \leq {C\over{|\theta-\varphi|^2}},  \quad \theta,\varphi\in (0,\pi).
    \end{align}

    On the other hand
    \begin{align}\label{eq:17.1}
        {{\cos{{\theta}\over 2}}\over{\sin{{\theta}\over 2}}}
            & = {{\cos{{\theta}\over 2}\cos{{\varphi}\over 2}}\over{\sin{{\theta}\over 2}\cos{{\varphi}\over 2}}}
              = {{\cos{{\theta}\over 2}\cos{{\varphi}\over 2}}\over{\sin{{\theta}\over 2}\cos{{\varphi}\over 2}-\sin{{\varphi}\over 2}\cos{{\theta}\over 2}+\sin{{\varphi}\over 2}\cos{{\theta}\over 2}}}
              = {{\cos{{\theta}\over 2}\cos{{\varphi}\over 2}}\over{\sin{{\theta-\varphi}\over 2}+\sin{{\varphi}\over 2}\cos{{\theta}\over 2}}}\nonumber \\
            & \leq {1\over{\sin{{\theta-\varphi}\over 2}}},  \quad 0<\varphi <\theta<\pi.
    \end{align}
    If $\varphi\in (0,\pi)$, $\theta\in (0,\pi/2)$ and $\theta<\varphi<3\theta/2$, then $\sin\varphi/3<\sin\theta/2$ and
    \begin{align}\label{eq:17.2}
        {{\cos{{\theta}\over 2}}\over{\sin{{\theta}\over 2}}}
            & \leq{{\cos{{\theta}\over 2}}\over{\sin{{\varphi}\over 3}}}
             = {{\cos{{\theta}\over 2}\cos{{\theta}\over 3}}\over{\sin{{\varphi}\over 3}\cos{{\theta}\over 3}-\sin{{\theta}\over 3}\cos{{\varphi}\over 3}+\sin{{\theta}\over 3}\cos{{\varphi}\over 3}}}
             \leq {{1}\over{\sin{{\theta-\varphi}\over 3}}}.
    \end{align}
    Also, we get
    \begin{align}\label{eq:17.3}
        & {{\cos{{\theta}\over 2}}\over{\sin{{\theta}\over 2}}}
        \leq \frac{1}{\sin \frac{\pi}{4}}
        \leq {C\over{\sin|{{\theta-\varphi}\over 2}|}}, \quad 0<\varphi<\pi,\ \ \pi/2<\theta<\pi.
    \end{align}
    By combining \eqref{eq:16.1}, \eqref{eq:17.1}, \eqref{eq:17.2} and \eqref{eq:17.3} we obtain
    \begin{equation}\label{eq:c3}
        \left|{{\cos{{\theta}\over 2}}\over{\sin{{\theta}\over 2}}}R_{\alpha,\beta}^1(\theta,\varphi)\right|
            \leq{C\over{|\theta-\varphi|^2}}, \quad (\theta,\varphi)\in (0,\pi)^2\backslash \mathcal{D}.
    \end{equation}

    We can write
    \begin{equation}\label{eq:17.4}
        {{\sin{{\theta}\over 2}}\over{\cos{{\theta}\over 2}}}
            =-{{\cos{{\pi - \theta}\over 2}}\over{\sin{{\pi - \theta}\over 2}}}, \quad \theta \in (0,\pi),
    \end{equation}
    and by symmetries reasons and proceeding as above we get
    \begin{equation}\label{eq:c4}
        \left|{{\sin{{\theta}\over 2}}\over{\cos{{\theta}\over 2}}}R_{\alpha,\beta}^1(\theta,\varphi)\right|
            \leq{C\over{|\theta-\varphi|^2}}, \quad (\theta,\varphi)\in (0,\pi)^2\backslash \mathcal{D}.
    \end{equation}
    From \eqref{eq:c1}, \eqref{eq:c2}, \eqref{eq:c3} and \eqref{eq:c4}  we conclude that
    $$|\partial_{\theta}R_{\alpha,\beta}^1(\theta,\varphi)|
        \leq{C\over{|\theta-\varphi|^2}}, \quad (\theta,\varphi)\in (0,\pi)^2\backslash \mathcal{D}.$$
    In a similar way, we can see that
    $$|\partial_{\varphi}R_{\alpha,\beta}^1(\theta,\varphi)|
        \leq{C\over{|\theta-\varphi|^2}}, \quad (\theta,\varphi)\in (0,\pi)^2\backslash \mathcal{D}.$$
    Thus, (\ref{eqT7}) is established.
\end{proof}

\begin{proof}[Proof of Proposition \ref{propT2}; the case of $R_{\alpha,\beta}^{1,*}$]
    We have that
    $$R_{\alpha,\beta}^{1,*}f
        =-\sum_{k=0}^{\infty}
            \sqrt{\frac{(k+1)(k+\alpha+\beta+2)}{\lambda_k^{\alpha+1,\beta+1}}}
            \ c_k^{\alpha+1,\beta+1}(f) \ {\phi}_{k+1}^{\alpha ,\beta}, \quad f\in L^2(0,\pi).$$
    From Plancherel's theorem we deduce that $R_{\alpha,\beta}^{1,*}$ is a bounded operator from $L^2(0,\pi)$ into itself.

    If $f\in C_c^{\infty}(0,\pi)$, then for every $m\in\Bbb N$ there exists $C_m$ such that
    \begin{equation*}\label{eqT8}
        |c_k^{\alpha+1,\beta+1}(f)|
            \leq C_m(k+1)^{-m}, \quad k\in\Bbb N.
    \end{equation*}
    Suppose that  $f,g\in C_c^{\infty}(0,\pi)$. Partial integration leads to
    $$\int_0^{\pi}R_{\alpha,\beta}^{1,*}f(\theta)g(\theta)d\theta
        =\int_0^{\pi}f(\varphi){\mathcal L}_{\alpha+1,\beta+1}^{-1/2}\left(D_{\alpha,\beta}g\right)(\varphi)d\varphi.$$
    By taking into account the rapid decay of the sequence $\left(c_k^{\alpha,\beta}(g)\right)_{k\in\Bbb N}$ and \cite[Lemma 3.1]{La} we write
    $$D_{\alpha,\beta}g(\theta)
        =-\sum_{k=0}^{\infty}\sqrt{k(k+\alpha+\beta+1)}c_k^{\alpha,\beta}(g){\phi}_{k-1}^{\alpha+1 ,\beta+1}(\theta), \quad \theta\in (0,\pi),$$
    and
    $${\mathcal L}_{\alpha+1,\beta+1}^{-1/2}\left(D_{\alpha,\beta}g\right)(\theta)
        =-\sum_{k=0}^{\infty} \sqrt{\frac{k(k+\alpha+\beta+1)}{\lambda_k^{\alpha,\beta}}}
                \ c_k^{\alpha,\beta}(g) \ {\phi}_{k-1}^{\alpha+1 ,\beta+1}(\theta)
        =R_{\alpha,\beta}^1 g(\theta),\ \ \theta\in (0,\pi).$$
    Hence, $R_{\alpha,\beta}^{1,*}$ is the adjoint of $R_{\alpha,\beta}^1$ (fact justifying the notation).
    Thus, $R_{\alpha,\beta}^{1,*}$ defines a bounded operator from $L^p_{w}(0,\pi)$ into itself, for every $1<p<\infty$ and $w\in A_p(0,\pi)$.
\end{proof}

Combining \cite[Theorem 1.3]{CFMP} with Lemma~\ref{lemT1} and Proposition~\ref{propT2} we obtain the following.

\begin{Prop}\label{prop:RieszLp(.)}
    Let $k\in\Bbb N$ and $\alpha,\beta\geq -1/2$ such that $\alpha+\beta \neq -1$.
    Suppose that $p \in{\mathcal B}(0,\pi)$.
    Then, $R_{\alpha,\beta}^k$ and $R_{\alpha,\beta}^{k,*}$ define bounded operators from $L^{p(\cdot)}(0,\pi)$ into itself.
\end{Prop}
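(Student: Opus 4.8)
The plan is to reduce the $L^{p(\cdot)}$-estimate to a weighted $L^p$-estimate for the whole Muckenhoupt range and then extrapolate, exactly along the lines sketched for assertions (a)--(d) in Section~\ref{sec:intro}. First I would invoke Lemma~\ref{lemT1} to factor the higher order transforms into first order ones,
$$R_{\alpha,\beta}^k = R_{\alpha+k-1,\beta+k-1}^1\circ\cdots\circ R_{\alpha,\beta}^1, \qquad R_{\alpha,\beta}^{k,*} = R_{\alpha,\beta}^{1,*}\circ\cdots\circ R_{\alpha+k-1,\beta+k-1}^{1,*},$$
these identities holding on the dense spectral spans $S_{\alpha,\beta}$ and $S_{\alpha+k,\beta+k}$, respectively.

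Next I would verify that every factor falls under the hypotheses of Proposition~\ref{propT2}. Each factor carries shifted parameters $(\alpha+j,\beta+j)$ with $0\le j\le k-1$; since $\alpha,\beta\ge -1/2$ we have $\alpha+j\ge -1/2$ and $\beta+j\ge -1/2$, and since $\alpha,\beta\ge -1/2$ together with $\alpha+\beta\ne -1$ force $\alpha+\beta>-1$, it follows that $(\alpha+j)+(\beta+j)=\alpha+\beta+2j>-1\ne -1$ for every $j\ge 0$. Thus Proposition~\ref{propT2} applies to each factor, giving boundedness from $L^p_w(0,\pi)$ into itself for all $1<p<\infty$ and $w\in A_p(0,\pi)$. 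A composition of operators bounded on a fixed $L^p_w(0,\pi)$ is again bounded on it, so $R_{\alpha,\beta}^k$ and $R_{\alpha,\beta}^{k,*}$ extend to bounded operators on $L^p_w(0,\pi)$ for every $1<p<\infty$ and $w\in A_p(0,\pi)$. Finally I would apply extrapolation: by \cite[Theorem 1.3]{CFMP}, with the reduction of its hypothesis to $p\in\mathcal B(0,\pi)$ recorded in Section~\ref{sec:intro}, weighted $L^p$ boundedness over the full $A_p$ class transfers to $L^{p(\cdot)}(0,\pi)$ boundedness.

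The point requiring the most care, rather than the estimates themselves, is the \emph{coherence of the definitions along the chain of parameter shifts}. Lemma~\ref{lemT1} supplies the factorization only on the spectral spans, so I must ensure that the $L^p_w$-extension of $R_{\alpha,\beta}^k$ genuinely coincides with the composition of the $L^p_w$-extensions of the first order transforms, i.e.\ that each intermediate operator maps the relevant dense subspace into a set where the next one is defined and that the identity survives passage to the limit under extension by density. This is in fact routine once one recalls from \eqref{eq:13.1} that $R_{\alpha+j,\beta+j}^1$ sends $\phi_l^{\alpha+j,\beta+j}$ to a multiple of $\phi_{l-1}^{\alpha+j+1,\beta+j+1}$, so the spectral spans are nested compatibly and the composition acts term by term; nevertheless it is the only step that needs an argument beyond a direct citation.
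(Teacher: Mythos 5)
Your proposal is correct and follows exactly the paper's own route: the paper proves this proposition in one line by combining Lemma~\ref{lemT1} (factorization into first order transforms), Proposition~\ref{propT2} (weighted $L^p_w$ boundedness of each first order factor, whose hypotheses hold for the shifted parameters since $\alpha+\beta+2j>-1$), and the extrapolation theorem \cite[Theorem 1.3]{CFMP} under the $p\in\mathcal{B}(0,\pi)$ reduction noted in Section~\ref{sec:intro}. Your additional care about the density/extension compatibility of the factorization is a sound (and implicit in the paper) refinement, not a deviation.
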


According to \cite[Lemma 3.1]{La} we get, for every $f\in S_{\alpha,\beta}$,
\begin{equation*}\label{eqT9}
    R_{\alpha,\beta}^{k,*}R_{\alpha,\beta}^kf
        =\sum_{n=k}^{\infty}{{(n-k+1)_k(n+\alpha+\beta+1)_k}\over{(\lambda_n^{\alpha,\beta})^k}}
        \ c_k^{\alpha,\beta}(f) \ {\phi}_n^{\alpha,\beta}.
\end{equation*}
Notice that, for every $n\in\Bbb N$, $n\geq k$,
$$(n-k+1)_k
    =\Big(\sqrt{{\lambda}_n^{\alpha,\beta}}-\sqrt{{\lambda}_{k-1}^{\alpha,\beta}}\Big)
     \Big(\sqrt{{\lambda}_n^{\alpha,\beta}}-\sqrt{{\lambda}_{k-2}^{\alpha,\beta}}\Big)
    \cdot ...\cdot
    \Big(\sqrt{{\lambda}_n^{\alpha,\beta}}-\sqrt{{\lambda}_0^{\alpha,\beta}}\Big),$$
and
$$(n+\alpha+\beta+1)_k
    =\Big(\sqrt{{\lambda}_n^{\alpha,\beta}}+\sqrt{{\lambda}_0^{\alpha,\beta}}\Big)
     \Big(\sqrt{{\lambda}_n^{\alpha,\beta}}+\sqrt{{\lambda}_1^{\alpha,\beta}}\Big)
     \cdot...\cdot
     \Big(\sqrt{{\lambda}_n^{\alpha,\beta}}+\sqrt{{\lambda}_{k-1}^{\alpha,\beta}}\Big).$$
We consider the function $M$ given by
$$M(x)
    ={{x^k}\over{\displaystyle \prod_{j=0}^{k-1}\Big(x-{\lambda}_j^{\alpha,\beta}}\Big)}, \qquad x\neq \lambda_j^{\alpha,\beta},\ j=0,...,k-1,$$
and we choose a smooth function $\phi$ on $(0,\infty)$ such that
$$\phi(x)
    =\left\{\begin{array}{l}
                0, \quad 0<x< {\lambda}_{k-1}^{\alpha,\beta}+{{\alpha+\beta+1}\over8}, \\
                \\
                1, \quad  x\geq {\lambda}_{k}^{\alpha,\beta}-{{\alpha+\beta+1}\over 8}.
            \end{array}\right.$$
Take $m=\phi M$. Then,
\begin{equation}\label{eqT10}
    m({\mathcal L}_{\alpha,\beta})R_{\alpha,\beta}^{k,*}R_{\alpha,\beta}^kf
        =f, \quad f\in S_{\alpha,\beta}.
\end{equation}
It is not hard to see that $m$ satisfies condition \eqref{eq:Mihlin} of proposition \ref{propM1}.
Hence, by Proposition~\ref{propM2} (with $a=0$) we infer the following.

\begin{Prop}\label{propT3}
    Let $\alpha,\beta\geq -1/2$ such that $\alpha+\beta \neq -1$.
    Suppose that $p \in{\mathcal B}(0,\pi)$.
    Then, the Jacobi spectral multiplier $m({\mathcal L}_{\alpha,\beta})$, where $m=\phi M$ is as above,
    defines a bounded operator from $L^{p(\cdot)}(0,\pi)$ into itself.
\end{Prop}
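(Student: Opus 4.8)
The plan is to check that $m=\phi M$ satisfies the Mihlin type condition \eqref{eq:Mihlin} and then to apply Proposition~\ref{propM2} with $a=0$. This value of $a$ is admissible: since $\alpha,\beta\ge-1/2$ and $\alpha+\beta\neq-1$ we have $\alpha+\beta>-1$, hence $0<\big(\frac{\alpha+\beta+1}{2}\big)^2$; moreover the eigenvalues of $\mathcal{L}_{\alpha,\beta;0}$ coincide with the $\lambda_k^{\alpha,\beta}$, so $m(\mathcal{L}_{\alpha,\beta;0})=m(\mathcal{L}_{\alpha,\beta})$. The structural fact that makes everything work is that $\phi$ annihilates all the singularities of $M$. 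Indeed, the poles of $M$ are exactly the distinct numbers $\lambda_0^{\alpha,\beta}<\cdots<\lambda_{k-1}^{\alpha,\beta}$, while $\phi$ vanishes on $\big(0,\lambda_{k-1}^{\alpha,\beta}+\frac{\alpha+\beta+1}{8}\big)$, an interval containing a neighborhood of each pole. Thus $m$ vanishes near every $\lambda_j^{\alpha,\beta}$ and $m\in C^\infty(0,\infty)$.

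To bound $x^\ell m^{(\ell)}(x)$ uniformly in $x$ for each fixed $\ell$, I would split $(0,\infty)$ into three ranges, separated by a fixed large $x_0$ (say $x_0=2\lambda_{k-1}^{\alpha,\beta}$). On $\big(0,\lambda_{k-1}^{\alpha,\beta}+\frac{\alpha+\beta+1}{8}\big)$ one has $m\equiv0$, so there is nothing to do. On the compact interval $\big[\lambda_{k-1}^{\alpha,\beta}+\frac{\alpha+\beta+1}{8},x_0\big]$, which stays away from the poles, both $\phi$ and $M$ are smooth with bounded derivatives and $x$ is bounded, so $x^\ell m^{(\ell)}(x)$ is bounded there by continuity. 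The only range requiring a real estimate is $x\ge x_0$, where $\phi\equiv1$ and $m=M$.

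For that range I would use the partial fraction decomposition. As the numerator and denominator of $M$ are monic of the same degree $k$, polynomial division yields
\begin{equation*}
    M(x)=1+\sum_{j=0}^{k-1}\frac{A_j}{x-\lambda_j^{\alpha,\beta}}
\end{equation*}
for suitable constants $A_j$, so that $M^{(\ell)}(x)=(-1)^\ell\,\ell!\,\sum_{j=0}^{k-1}A_j\big(x-\lambda_j^{\alpha,\beta}\big)^{-\ell-1}$ for $\ell\ge1$. For $x\ge x_0$ we have $x-\lambda_j^{\alpha,\beta}\ge x-\lambda_{k-1}^{\alpha,\beta}\ge x/2$, whence
\begin{equation*}
    \big|x^\ell M^{(\ell)}(x)\big|\le C\,\frac{x^\ell}{x^{\ell+1}}=\frac{C}{x}, \quad \ell\ge1,
\end{equation*}
and $M(x)\to1$ as $x\to\infty$. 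This gives the uniform bound on $x\ge x_0$, and combining the three ranges proves \eqref{eq:Mihlin}.

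Having verified \eqref{eq:Mihlin}, condition $(ii)$ of Proposition~\ref{propM1} is met by $m$, and Proposition~\ref{propM2} with $a=0$ gives the $L^{p(\cdot)}(0,\pi)$ boundedness of $m(\mathcal{L}_{\alpha,\beta})$. I expect the only delicate point to be the uniform-in-$\ell$ decay of $x^\ell M^{(\ell)}(x)$ at infinity; this is precisely what the partial fraction representation provides, while the rest is handled by the vanishing of $m$ near the poles and by compactness.
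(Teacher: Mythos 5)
Your proposal follows exactly the paper's route: the paper's entire proof of Proposition~\ref{propT3} consists of the assertion that $m=\phi M$ satisfies condition \eqref{eq:Mihlin} followed by an application of Proposition~\ref{propM2} with $a=0$; you have simply supplied the verification the paper leaves to the reader, and that verification is sound in substance (the partial-fraction decomposition plus the vanishing of $\phi$ near the poles is the right mechanism, and it gives bounds uniform in $x$ for each fixed $\ell$, which is all \eqref{eq:Mihlin} asks). One detail needs repair: with your choice $x_0=2\lambda_{k-1}^{\alpha,\beta}$ it is not true in general that $\phi\equiv 1$ on $[x_0,\infty)$, since $\phi$ is only guaranteed to equal $1$ for $x\geq\lambda_k^{\alpha,\beta}-\frac{\alpha+\beta+1}{8}$, and this threshold can exceed $2\lambda_{k-1}^{\alpha,\beta}$ (e.g.\ $\alpha=\beta=0$, $k=1$: $2\lambda_0^{\alpha,\beta}=1/2$ while $\lambda_1^{\alpha,\beta}-\tfrac18=17/8$). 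The fix is immediate: take $x_0=\max\{2\lambda_{k-1}^{\alpha,\beta},\lambda_k^{\alpha,\beta}\}$; then for $x\geq x_0$ both $\phi(x)=1$ and $x-\lambda_j^{\alpha,\beta}\geq x/2$ hold, so your partial-fraction estimate applies verbatim on $[x_0,\infty)$, while the enlarged middle interval remains compact and free of poles of $M$, so your continuity argument still covers it.
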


%\newpage
%%%%%%%%%%%%%%%%%%%%%%%%%%%%%%%%%%%%%%%%%%%%%%%%%%%%%%%%%%%%%%%%%%%%%%%%%
\section{Proof of Theorem~\ref{ThZ}} \label{sec:7}
%%%%%%%%%%%%%%%%%%%%%%%%%%%%%%%%%%%%%%%%%%%%%%%%%%%%%%%%%%%%%%%%%%%%%%%%%

First of all we establish the following lemma where we define some Jacobi spectral multipliers that will be useful in the sequel.
\begin{Lem}\label{lemZ1}
    Let $\varepsilon, \gamma >0$, $r\in\Bbb N$ with $r>\gamma$ and $\alpha,\beta\geq -1/2$ such that $\alpha + \beta \neq -1$.
    Assume that $p \in{\mathcal B}(0,\pi)$. We define, for each $t>0$, the functions
    \begin{equation}\label{eq:multipliers}
        Y_{\varepsilon}(t)=(1-e^{-\varepsilon t})^r, \quad
       M_{\varepsilon}(t)={{(1-e^{-\varepsilon t})^r}\over{(\varepsilon t)^{\gamma/2}}} \quad \text{and} \quad
        H_\varepsilon(t)=\int_{\varepsilon t}^\infty{{(1-e^{-u})^r}\over{u^{1+\gamma}}}du.
    \end{equation}
    By $m_\varepsilon$ we represent $Y_\varepsilon$, $M_\varepsilon$ or $H_\varepsilon$.
    Then, $m_\varepsilon$ defines a Jacobi spectral multiplier on $L^{p(\cdot)}(0,\pi)$.
    Moreover,
    $$\sup_{\varepsilon >0}\|m_\varepsilon(\mathcal{L}_{\alpha,\beta})\|_{L^{p(\cdot)}(0,\pi)}
        < \infty.$$
\end{Lem}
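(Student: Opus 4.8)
The plan is to reduce the statement, for each of the three profiles, to a single application of Proposition~\ref{propM2} with the value $a=0$. This choice is admissible precisely because $\alpha+\beta\neq-1$ forces $\left(\frac{\alpha+\beta+1}{2}\right)^2>0$, so that $0<\left(\frac{\alpha+\beta+1}{2}\right)^2$ and $\mathcal{L}_{\alpha,\beta;0}=\mathcal{L}_{\alpha,\beta}$. Since the eigenvalues $\lambda_n^{\alpha,\beta}=(n+\frac{\alpha+\beta+1}{2})^2$ are positive, it suffices to check that $Y_\varepsilon$, $M_\varepsilon$ and $H_\varepsilon$ each satisfy the Mihlin-type condition \eqref{eq:Mihlin} of Proposition~\ref{propM1}$(ii)$, with seminorms bounded uniformly in $\varepsilon>0$. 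The uniformity of the operator norm then follows because the constant produced by Proposition~\ref{propM2} depends on the multiplier only through finitely many of the quantities appearing in \eqref{eq:Mihlin}.

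The decisive point is that each $m_\varepsilon$ has the form $m_\varepsilon(t)=g(\varepsilon t)$ for a fixed profile $g\in C^\infty(0,\infty)$, namely $g_Y(s)=(1-e^{-s})^r$, $g_M(s)=(1-e^{-s})^r s^{-\gamma/2}$ and $g_H(s)=\int_s^\infty(1-e^{-u})^r u^{-1-\gamma}\,du$. From $\frac{d^\ell}{dt^\ell}m_\varepsilon(t)=\varepsilon^\ell g^{(\ell)}(\varepsilon t)$ we obtain the scale-invariant identity
$$t^\ell\frac{d^\ell}{dt^\ell}m_\varepsilon(t)=(\varepsilon t)^\ell\,g^{(\ell)}(\varepsilon t)=\Big(s^\ell g^{(\ell)}(s)\Big)\Big|_{s=\varepsilon t},$$
so that $\sup_{t>0}\big|t^\ell\frac{d^\ell}{dt^\ell}m_\varepsilon(t)\big|=\sup_{s>0}\big|s^\ell g^{(\ell)}(s)\big|$ does not depend on $\varepsilon$. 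The whole problem therefore collapses to proving $\sup_{s>0}|s^\ell g^{(\ell)}(s)|<\infty$ for every $\ell\in\N$ and each of the three profiles.

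To carry this out I would split the analysis at $s=0$ and $s=\infty$. Near the origin I would write $1-e^{-s}=s\,\psi_0(s)$ with $\psi_0$ smooth and $\psi_0(0)=1$, so that $(1-e^{-s})^r=s^r\psi(s)$ with $\psi$ smooth and $\psi(0)=1$; thus $g_M(s)=s^{r-\gamma/2}\psi(s)$ and, by the fundamental theorem of calculus, $g_H'(s)=-s^{r-1-\gamma}\psi(s)$. Differentiating these power-type expansions gives $g_M^{(\ell)}(s)=O(s^{r-\gamma/2-\ell})$ and $g_H^{(\ell)}(s)=O(s^{r-\gamma-\ell})$ as $s\to0^+$, whence $s^\ell g_M^{(\ell)}(s)=O(s^{r-\gamma/2})$ and $s^\ell g_H^{(\ell)}(s)=O(s^{r-\gamma})$ tend to $0$; here the hypothesis $r>\gamma$ is exactly what makes $g_H$ finite at the origin, the integrand being $O(u^{r-1-\gamma})$ with $r-1-\gamma>-1$, and what produces the decay $s\,g_H'(s)=O(s^{r-\gamma})$. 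Since $g_Y$ is smooth at $0$ with a zero of order $r$, one likewise has $s^\ell g_Y^{(\ell)}(s)\to0$. As $s\to\infty$ I would expand $(1-e^{-s})^r=\sum_{j=0}^r\binom{r}{j}(-1)^je^{-js}$: every derivative of the terms with $j\geq1$ decays exponentially, while the leading constant term contributes only through the explicit factors $s^{-\gamma/2}$ and $s^{-1-\gamma}$; a Leibniz expansion then yields $g_Y^{(\ell)}(s)=O(e^{-s})$, $g_M^{(\ell)}(s)=O(s^{-\gamma/2-\ell})$ and $g_H^{(\ell)}(s)=O(s^{-\gamma-\ell})$, so that $s^\ell g^{(\ell)}(s)$ is again bounded (indeed it tends to $0$).

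The only real obstacle is the computational bookkeeping: organizing the Leibniz rule, together with an elementary induction, for the higher derivatives of $g_M$ and especially of $g_H$, so as to justify the two-sided decay rates stated above for every $\ell$. For $g_Y$ this can be bypassed entirely by appealing to condition $(i)$ of Proposition~\ref{propM1} instead, since $z\mapsto(1-e^{-\varepsilon z})^r$ is holomorphic on $\{\mathrm{Re}\,z>0\}$ and there satisfies $|1-e^{-\varepsilon z}|\leq1+e^{-\varepsilon\,\mathrm{Re}\,z}<2$, giving $\|Y_\varepsilon\|_\infty\leq2^r$ uniformly in $\varepsilon$. Assembling the three uniform Mihlin estimates and inserting them into Proposition~\ref{propM2} delivers the asserted uniform bound on $\|m_\varepsilon(\mathcal{L}_{\alpha,\beta})\|_{L^{p(\cdot)}(0,\pi)}$.
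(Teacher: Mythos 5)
Your proposal is correct and follows essentially the same route as the paper: verify the Mihlin-type condition \eqref{eq:Mihlin} with constants uniform in $\varepsilon$, then invoke Proposition~\ref{propM2} with $a=0$ (admissible since $\alpha+\beta\neq-1$). The paper dismisses the uniform seminorm bounds as ``straightforward manipulations,'' whereas you make them precise via the scale-invariance identity $t^\ell\frac{d^\ell}{dt^\ell}m_\varepsilon(t)=\big(s^\ell g^{(\ell)}(s)\big)\big|_{s=\varepsilon t}$ and the endpoint analysis of the fixed profiles $g_Y$, $g_M$, $g_H$ --- a welcome filling-in of detail, not a different argument.
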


\begin{proof}
    Straightforward manipulations allow us to show that, for every $\ell \in\Bbb N$, there exists $C>0$ such that
    $$\sup_{\varepsilon >0}\big |t^\ell{{d^\ell}\over{dt^\ell}}m_\varepsilon(t)\big |
        \leq C,$$
    where $C$ does not depend on $\varepsilon$. Then, by Proposition~\ref{propM2} (taken with $a=0$) we concluded the desired results.
\end{proof}

\begin{Prop}\label{prop:Iepsgamr}
    Let $\varepsilon, \gamma >0$, $r\in\Bbb N$ with $r>\gamma$ and $\alpha,\beta\geq -1/2$ such that $\alpha + \beta \neq -1$.
    Assume that $p \in{\mathcal B}(0,\pi)$.
    Then, the operator $I_{\varepsilon}^{\gamma,r}$ defined in \eqref{eqH1} is bounded from $L^{p(\cdot)}(0,\pi)$ into itself.
\end{Prop}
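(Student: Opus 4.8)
The plan is to estimate the $L^{p(\cdot)}(0,\pi)$-norm of $I_\varepsilon^{\gamma,r}f$ by passing the norm inside the Bochner integral in \eqref{eqH1}. Since, as observed just before the statement, $\int_\varepsilon^\infty \|(I-W_u^{\alpha,\beta})^rf\|_{L^{p(\cdot)}(0,\pi)}\,u^{-\gamma-1}\,du<\infty$ for every $f\in L^{p(\cdot)}(0,\pi)$, the integrand is Bochner integrable and the triangle inequality for the Bochner integral yields
\begin{equation*}
    \|I_\varepsilon^{\gamma,r}f\|_{L^{p(\cdot)}(0,\pi)}
        \leq C_{\gamma,r}\int_\varepsilon^\infty \frac{\|(I-W_u^{\alpha,\beta})^rf\|_{L^{p(\cdot)}(0,\pi)}}{u^{\gamma+1}}\,du .
\end{equation*}
Thus it is enough to control $\|(I-W_u^{\alpha,\beta})^rf\|_{L^{p(\cdot)}(0,\pi)}$ uniformly in $u>0$ and then to integrate against $u^{-\gamma-1}$.

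I would then expand $(I-W_u^{\alpha,\beta})^r$ by the binomial theorem and use the semigroup property $(W_u^{\alpha,\beta})^j=W_{ju}^{\alpha,\beta}$ to write $(I-W_u^{\alpha,\beta})^r=\sum_{j=0}^r\binom{r}{j}(-1)^jW_{ju}^{\alpha,\beta}$, where $W_0^{\alpha,\beta}=I$. The key ingredient is the uniform boundedness of the heat semigroup on $L^{p(\cdot)}(0,\pi)$: exactly as in the proof of Proposition~\ref{propS3}, the Gaussian bound \eqref{eqS2} gives $|W_t^{\alpha,\beta}f|\leq C\mathcal{M}_c(f)$, and since $p\in\mathcal{B}(0,\pi)$ the centered maximal operator (hence the heat maximal operator $W_*^{\alpha,\beta}$) is bounded on $L^{p(\cdot)}(0,\pi)$. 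Consequently $\sup_{t>0}\|W_t^{\alpha,\beta}\|_{L^{p(\cdot)}(0,\pi)\to L^{p(\cdot)}(0,\pi)}<\infty$, and therefore $\|(I-W_u^{\alpha,\beta})^rf\|_{L^{p(\cdot)}(0,\pi)}\leq C2^r\|f\|_{L^{p(\cdot)}(0,\pi)}$ uniformly in $u>0$.

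Inserting this into the integral and using $\int_\varepsilon^\infty u^{-\gamma-1}\,du=\varepsilon^{-\gamma}/\gamma<\infty$ (because $\gamma>0$) gives $\|I_\varepsilon^{\gamma,r}f\|_{L^{p(\cdot)}(0,\pi)}\leq C\,\varepsilon^{-\gamma}\|f\|_{L^{p(\cdot)}(0,\pi)}$, which is the asserted boundedness. I do not expect a serious obstacle: the only delicate points are the uniform-in-$u$ control of the heat semigroup (handled by the maximal-operator argument) and the fact that the constant degenerates as $\varepsilon\to 0^+$; this degeneration is unavoidable, since $I_\varepsilon^{\gamma,r}\to\mathcal{L}_{\alpha,\beta}^\gamma$ and the limit operator is genuinely unbounded. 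If one prefers a multiplier-theoretic argument, it suffices to note that $I_\varepsilon^{\gamma,r}$ is the spectral multiplier with symbol $\mathfrak{m}_\varepsilon(z)=C_{\gamma,r}\int_\varepsilon^\infty(1-e^{-uz})^r u^{-\gamma-1}\,du$; as $|1-e^{-uz}|\leq 2$ for $\mathrm{Re}\,z>0$, the function $\mathfrak{m}_\varepsilon$ is bounded and holomorphic on $\{\mathrm{Re}\,z>0\}$, so Proposition~\ref{propM2}$(i)$ (with $a=0$, which is admissible since $\alpha+\beta\neq-1$) applies at once.
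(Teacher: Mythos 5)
Your proof is correct, and your main argument takes a genuinely more elementary route than the paper's. The paper works on the dense subspace $S_{\alpha,\beta}$, identifies $(I-W_u^{\alpha,\beta})^r f = Y_u(\mathcal{L}_{\alpha,\beta})f$ with $Y_u(t)=(1-e^{-ut})^r$, and invokes Lemma~\ref{lemZ1}: the family $\{Y_u\}_{u>0}$ satisfies the Mihlin-type condition \eqref{eq:Mihlin} uniformly in $u$, so Proposition~\ref{propM2} (Meda's multiplier theorem combined with the extrapolation result of \cite{CFMP}) yields $\sup_{u>0}\|Y_u(\mathcal{L}_{\alpha,\beta})\|_{L^{p(\cdot)}(0,\pi)\rightarrow L^{p(\cdot)}(0,\pi)}<\infty$; the triangle inequality for the Bochner integral and the density of $S_{\alpha,\beta}$ (Proposition~\ref{propS1}) then conclude. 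You obtain the same uniform bound $\sup_{u>0}\|(I-W_u^{\alpha,\beta})^r\|_{L^{p(\cdot)}(0,\pi)\rightarrow L^{p(\cdot)}(0,\pi)}\leq C2^r$ with no multiplier theorem at all: binomial expansion, the semigroup law, and the pointwise domination $|W_t^{\alpha,\beta}f|\leq C\mathcal{M}_c(f)$ coming from the Gaussian estimate \eqref{eqS2} together with the definition of $\mathcal{B}(0,\pi)$ --- precisely the device the paper itself uses in the proof of Proposition~\ref{propS3}. This is more self-contained, works directly for every $f\in L^{p(\cdot)}(0,\pi)$ (no density step), and proves, rather than merely uses, the finiteness of $\int_\varepsilon^\infty\|(I-W_u^{\alpha,\beta})^rf\|_{L^{p(\cdot)}(0,\pi)}u^{-\gamma-1}du$ asserted after \eqref{eqH1}. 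What the paper's route buys is economy within its own architecture: the uniform multiplier estimates for $Y_\varepsilon$, $M_\varepsilon$, $H_\varepsilon$ in Lemma~\ref{lemZ1} are needed again in the proof of Theorem~\ref{ThZ}, so Proposition~\ref{prop:Iepsgamr} comes along essentially for free. Your alternative multiplier-theoretic argument is also sound --- note that a change of variables gives $\mathfrak{m}_\varepsilon(t)=C_{\gamma,r}\,t^\gamma H_\varepsilon(t)$, a close relative of the multipliers in \eqref{eq:multipliers} --- but it is not quite a shortcut: to transfer the boundedness of $\mathfrak{m}_\varepsilon(\mathcal{L}_{\alpha,\beta})$ to the operator $I_\varepsilon^{\gamma,r}$ defined by the Bochner integral, one must identify the two on $S_{\alpha,\beta}$ and extend by density, and that extension already requires a uniform (or at least integrable-in-$u$) bound for $(I-W_u^{\alpha,\beta})^r$ on $L^{p(\cdot)}(0,\pi)$, which is exactly what your first argument (or Lemma~\ref{lemZ1}) supplies.
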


\begin{proof}
    Let $f\in S_{\alpha,\beta}$. We can write
    $$(I-W_u^{\alpha,\beta})^rf
        =\sum_{n=0}^{\infty} Y_u\left({\lambda}_n^{\alpha,\beta}\right) c_n^{\alpha,\beta}(f){\phi}_n^{\alpha,\beta}
        = Y_u(\mathcal{L}_{\alpha,\beta}) f, \quad u>0,$$
    where the series is actually a finite sum. According to Lemma \ref{lemZ1}, we deduce that,
    \begin{align*}
        \|I_{\varepsilon}^{\gamma,r} f\|_{L^{p(\cdot)}(0,\pi)}
            \leq C \sup_{u>0}\| Y_u(\mathcal{L}_{\alpha,\beta}) f\|_{L^{p(\cdot)}(0,\pi)} \int_{\varepsilon}^{\infty} \frac{du}{u^{\gamma+1}}
            \leq C \|f\|_{L^{p(\cdot)}(0,\pi)}.
    \end{align*}
  Taking into account that $S_{\alpha,\beta}$ is a dense subspace of $L^{p(\cdot)}(0,\pi)$ (Proposition~\ref{propS1}) the conclusion follows.
\end{proof}

\begin{proof}[Proof of Theorem~\ref{ThZ}]
    Suppose that $f\in D_{p(\cdot)}({\mathcal L}_{\alpha,\beta}^\gamma)$ and call $g=\displaystyle\lim_{\varepsilon\rightarrow 0^+} I_\varepsilon^{\gamma,r} f$.
    Since ${\mathcal L}_{\alpha,\beta}^{-\gamma}$ is a bounded operator from $L^{p(\cdot)}(0,\pi)$ into itself (Proposition \ref{propM3}), we have that
    \begin{align*}
    {\mathcal L}_{\alpha,\beta}^{-\gamma}g
        & =  C_{\gamma,r}\lim_{\varepsilon\rightarrow 0^+} {\mathcal L}_{\alpha,\beta}^{-\gamma}\int_{\varepsilon}^{\infty}{{(I-W_u^{\alpha,\beta})^rf}\over{u^{1+\gamma}}}du \\
        & = C_{\gamma,r}\lim_{\varepsilon\rightarrow 0^+}\int_{\varepsilon}^{\infty}{{(I-W_u^{\alpha,\beta})^r}\over{u^{1+\gamma}}}{\mathcal L}_{\alpha,\beta}^{-\gamma}f\ du,
        \quad \text{in } L^{p(\cdot)}(0,\pi).
    \end{align*}
    We can write
    \begin{align*}
        {{(I-W_u^{\alpha,\beta})^rf}\over{u^{\gamma/2}}}{\mathcal L}_{\alpha,\beta}^{-\gamma}f
            = & \sum_{n=0}^{\infty} {{\Big(1-e^{-u{\lambda}_n^{\alpha,\beta}}\Big)^r}\over{\Big(u{\lambda}_n^{\alpha,\beta}\Big)^{\gamma/2}\left({\lambda}_n^{\alpha,\beta}\right)^{\gamma/2}}}
                \ c_n^{\alpha,\beta}(f)  \ {\phi}_n^{\alpha,\beta}
            = M_u(\mathcal{L}_{\alpha,\beta}) {\mathcal L}_{\alpha,\beta}^{-\gamma/2}f, \quad u>0,
    \end{align*}
    where $M_u$ was defined in \eqref{eq:multipliers}.
    According to Lemma \ref{lemZ1} and Propositions \ref{Prop:B1} and \ref{propM3}, there exists $C>0$ such that
    \begin{align*}
        \Big\| M_u(\mathcal{L}_{\alpha,\beta}) {\mathcal L}_{\alpha,\beta}^{-\gamma/2} \Big( \sum_{n=0}^\ell c_n^{\alpha,\beta}(f){\phi}_n^{\alpha,\beta} \Big) \Big\|_{L^{p(\cdot)}(0,\pi)}
            & \leq C \Big\|\sum_{n=0}^\ell c_n^{\alpha,\beta}(f){\phi}_n^{\alpha,\beta} \Big\|_{L^{p(\cdot)}(0,\pi)} \\
            &  \leq C\|f\|_{L^{p(\cdot)}(0,\pi)}, \quad \ell \in\Bbb N\ \mbox{and}\ u>0.
    \end{align*}
    Also, since $u^{-1-\gamma}\in L^1(\varepsilon,\infty)$, $\varepsilon>0$, we obtain
    \begin{align*}
        {\mathcal L}_{\alpha,\beta}^{-\gamma}\int_{\varepsilon}^{\infty}{{(I-W_u^{\alpha,\beta})^rf}\over{u^{1+\gamma}}}du
            & = \sum_{n=0}^\infty \int_{\varepsilon}^{\infty}{{\left(1-e^{-u{\lambda}_n^{\alpha,\beta}}\right)^r}\over{\left(u{\lambda}_n^{\alpha,\beta}\right)^{\gamma}}}{{du}\over u}
                    \ c_n^{\alpha,\beta}(f) \ {\phi}_n^{\alpha,\beta} \\
            & = \sum_{n=0}^\infty \int_{\varepsilon{\lambda}_n^{\alpha,\beta}}^{\infty}{{\left(1-e^{-u}\right)^r}\over{u^{1+\gamma}}}du
                    \ c_n^{\alpha,\beta}(f) \ {\phi}_n^{\alpha,\beta}
              = H_\varepsilon(\mathcal{L}_{\alpha,\beta}) f, \quad \varepsilon>0,
    \end{align*}
    where $H_\varepsilon$ was defined in \eqref{eq:multipliers}.

    Suppose that $F\in S_{\alpha,\beta}$. We can write, for every $l\in\Bbb N$,
    \begin{align*}
       \lim_{\varepsilon\rightarrow 0^+} H_\varepsilon(\mathcal{L}_{\alpha,\beta}) F
            & =  \lim_{\varepsilon\rightarrow 0^+} \sum_{n=0}^\ell \int_{\varepsilon{\lambda}_n^{\alpha,\beta}}^{\infty}{{\left(1-e^{-u}\right)^r}\over{u^{1+\gamma}}}du\ c_n^{\alpha,\beta}(F){\phi}_n^{\alpha,\beta}
             = \frac{1}{C_{\gamma,r}} \sum_{n=0}^\ell c_n^{\alpha,\beta}(F){\phi}_n^{\alpha,\beta}
            ={F\over{C_{\gamma,r}}},
    \end{align*}
    in the sense of convergence in $L^{p(\cdot)}(0,\pi)$.
    Since $S_{\alpha,\beta}$ is dense in $L^{p(\cdot)}(0,\pi)$ (Proposition \ref{propS1}), Lemma \ref{lemZ1} leads to

    $$\lim_{\varepsilon\rightarrow 0^+}H_\varepsilon(\mathcal{L}_{\alpha,\beta})(f)={f\over{C_{\gamma,r}}}.$$
    Thus, we conclude that ${\mathcal L}_{\alpha,\beta}^{-\gamma}g=f$.

    On the other hand, take $f\in H_{\alpha,\beta}^{\gamma,p(\cdot)}(0,\pi)$ such that $f={\mathcal L}_{\alpha,\beta}^{-\gamma}g$,
    with $g\in L^{p(\cdot)}(0,\pi)$. Then, as it has just been proved,
    $$\lim_{\varepsilon\rightarrow 0^+} I_\varepsilon^{\gamma,r}f
        = C_{\gamma,r}\lim_{\varepsilon\rightarrow 0^+}\int_{\varepsilon}^{\infty}{{(I-W_u^{\alpha,\beta})^r}\over{u^{1+\gamma}}}{\mathcal L}_{\alpha,\beta}^{-\gamma} g \ du
        = g,$$
    in the sense of convergence in $L^{p(\cdot)}(0,\pi)$.
\end{proof}

\begin{Rem}\label{remZ1}
    A careful reading of the above proof reveals that we can consider any
    $r\in\Bbb N$, $r>\gamma$ (not necessarily $r<\gamma\leq r+1$). This fact implies that the operator
    ${\mathcal L}_{\alpha,\beta}^{\gamma}$ can be defined by \eqref{eqH2}, for any $r\in\Bbb N$, $r>\gamma$.
\end{Rem}

%\newpage
%%%%%%%%%%%%%%%%%%%%%%%%%%%%%%%%%%%%%%%%%%%%%%%%%%%%%%%%%%%%%%%%%%%%%%%%%
\section{Proof of Theorem~\ref{Th2}} \label{sec:5}
%%%%%%%%%%%%%%%%%%%%%%%%%%%%%%%%%%%%%%%%%%%%%%%%%%%%%%%%%%%%%%%%%%%%%%%%%

Assume that $\gamma>0$. It is not hard to see that $\partial_t^\gamma e^{-at} =e^{i\pi\gamma} a^\gamma e^{-at}$, $t,a >0$.
Thus, we have that, for every $f \in S_{\alpha,\beta} \cup C_c^\infty(0,\pi)$,
$$\partial_t^\gamma P_t^{\alpha,\beta}f(\theta)
    = \sum_{n=0}^\infty e^{i\pi\gamma} (\lambda_n^{\alpha,\beta})^{\gamma/2} e^{-t \sqrt{\lambda_n^{\alpha,\beta}}}
        c_n^{\alpha,\beta}(f) \phi_n^{\alpha,\beta}(\theta), \quad \theta \in (0,\pi).$$
Hence, for every $f \in S_{\alpha,\beta} \cup C_c^\infty(0,\pi)$,
$$g_{\alpha,\beta}^\gamma (f)(\theta)
    < \infty, \quad \theta \in (0,\pi).$$
Our first objective is to establish $L_{\omega}^p$-boundedness properties of $g_{\alpha,\beta}^\gamma$-functions.

\begin{Prop}\label{PropG1}
    Let $\gamma>0$ and $\alpha, \beta \geq -1/2$. Then, $g_{\alpha,\beta}^\gamma$ defines a bounded
    (quasi-linear) operator from $L^p_w(0,\pi)$ into itself, for every $1<p<\infty$ and $w \in A_p(0,\pi)$.
\end{Prop}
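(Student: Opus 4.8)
The plan is to realize $g_{\alpha,\beta}^\gamma$ as the norm of a Hilbert-space-valued singular integral operator and then argue exactly as in the proof of Proposition~\ref{propT2}: an $L^2$-bound from the spectral theorem, Calder\'on--Zygmund-type kernel estimates, and a splitting into a local part (treated by the local Calder\'on--Zygmund theory of \cite{CNS}) and a global part (treated by a vector-valued version of Lemma~\ref{lemT2}). Set $\mathcal{H}=L^2((0,\infty),dt/t)$ and define the $\mathcal{H}$-valued operator
\[
    G_\gamma f(\theta)=\big\{t^\gamma\partial_t^\gamma P_t^{\alpha,\beta}f(\theta)\big\}_{t>0},
\]
so that $g_{\alpha,\beta}^\gamma(f)(\theta)=\|G_\gamma f(\theta)\|_{\mathcal{H}}$ and the claimed boundedness of the (quasi-linear) operator $g_{\alpha,\beta}^\gamma$ is equivalent to the boundedness of the (linear) operator $G_\gamma$ from $L^p_w(0,\pi)$ into $L^p_w(0,\pi;\mathcal{H})$.

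First I would establish the $L^2(0,\pi)$-boundedness. Using the spectral identity $\partial_t^\gamma e^{-at}=e^{i\pi\gamma}a^\gamma e^{-at}$ recalled above, Plancherel's theorem and Fubini's theorem give
\[
    \|g_{\alpha,\beta}^\gamma(f)\|_{L^2(0,\pi)}^2
        =\sum_{n=0}^\infty (\lambda_n^{\alpha,\beta})^\gamma|c_n^{\alpha,\beta}(f)|^2\int_0^\infty t^{2\gamma}e^{-2t\sqrt{\lambda_n^{\alpha,\beta}}}\frac{dt}{t}
        =\frac{\Gamma(2\gamma)}{2^{2\gamma}}\,\|f\|_{L^2(0,\pi)}^2,
\]
so that $g_{\alpha,\beta}^\gamma$ is, up to a constant, an isometry on $L^2(0,\pi)$ and in particular bounded.

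The core of the argument is the kernel analysis. The operator $G_\gamma$ has the $\mathcal{H}$-valued kernel $K(\theta,\varphi)=\{t^\gamma\partial_t^\gamma P_t^{\alpha,\beta}(\theta,\varphi)\}_{t>0}$, and I would prove the size estimate $\|K(\theta,\varphi)\|_{\mathcal{H}}\leq C/|\theta-\varphi|$, valid on all of $(0,\pi)^2$ with $\theta\neq\varphi$, together with the smoothness estimate
\[
    \|\partial_\theta K(\theta,\varphi)\|_{\mathcal{H}}+\|\partial_\varphi K(\theta,\varphi)\|_{\mathcal{H}}\leq\frac{C}{|\theta-\varphi|^2},\qquad (\theta,\varphi)\in(0,\pi)^2\setminus\mathcal{D}.
\]
To obtain these I would substitute the definition of $\partial_t^\gamma$ to write $t^\gamma\partial_t^\gamma P_t^{\alpha,\beta}(\theta,\varphi)$ as a fractional integral in $s$ of $\partial_t^m P_{t+s}^{\alpha,\beta}(\theta,\varphi)$ (with $m-1\le\gamma<m$), and then use the subordination formula \eqref{eq:3.1.1} to reduce the integer $t$-derivatives of the Poisson kernel to the heat kernel $W_u^{\alpha,\beta}$, on which the Gaussian bound \eqref{eqS2} applies; for the global size bound one may instead run the chain of estimates in \eqref{eq:16.1} starting from the explicit representation \eqref{eq:15.1}, and for the derivatives proceed as with the quantity $T_{\alpha,\beta}$ in \cite{NoSj2}. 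After moving the $\mathcal{H}$-norm inside by Minkowski's integral inequality, the computation mirrors the model Euclidean case, where $|t^\gamma\partial_t^\gamma P_t(\theta,\varphi)|\lesssim t^\gamma(t+|\theta-\varphi|)^{-\gamma-1}$ and the change of variables $t=|\theta-\varphi|\tau$ in $\int_0^\infty t^{2\gamma}(t+|\theta-\varphi|)^{-2\gamma-2}\,dt/t$ produces exactly the factor $|\theta-\varphi|^{-2}$.

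Finally, once the kernel estimates are in hand, I would conclude by splitting $G_\gamma$ according to the characteristic functions $\chi_{\mathcal{D}}$ and $\chi_{(0,\pi)^2\setminus\mathcal{D}}$. For the part on $\mathcal{D}$, the proof of Lemma~\ref{lemT2} carries over verbatim to the $\mathcal{H}$-valued setting, since it uses only the pointwise majorization by $\|K(\theta,\varphi)\|_{\mathcal{H}}\leq C/|\theta-\varphi|$ together with the boundedness of the Hardy--Littlewood maximal operator and of the Stieltjes operators $S^1,S^2$; this yields the $L^p_w$-bound for every $1<p<\infty$ and $w\in A_p(0,\pi)$. For the part on $(0,\pi)^2\setminus\mathcal{D}$, the size and smoothness estimates identify $G_\gamma$ as a vector-valued local Calder\'on--Zygmund operator, so the Hilbert-space-valued local Calder\'on--Zygmund theory of \cite{CNS} applies as in Proposition~\ref{propT2}. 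The main obstacle is the kernel estimate: because $\partial_t^\gamma$ is nonlocal in $t$, one must simultaneously control the fractional-integration variable $s$, the subordination time $u$ and the outer square-function variable $t$, and verify that, after the $\mathcal{H}$-norm is taken, precisely the homogeneities $|\theta-\varphi|^{-1}$ and $|\theta-\varphi|^{-2}$ survive.
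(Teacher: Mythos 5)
Your proposal takes essentially the same route as the paper's proof: there too, $g_{\alpha,\beta}^{\gamma}$ is realized as the norm of an $L^2(dt/t)$-valued singular integral whose kernel satisfies your size bound on all of $(0,\pi)^2$ and your gradient bound off $\mathcal{D}$ (proved from the representation \eqref{eq:15.1}, the estimates in \cite[Lemma 4.8]{NoSj2} and Minkowski's inequality), the $L^2$ identity $\|g_{\alpha,\beta}^{\gamma}(f)\|_{L^2(0,\pi)}^2=\tfrac{\Gamma(2\gamma)}{2^{2\gamma}}\|f\|_{L^2(0,\pi)}^2$ is obtained by polarization (equivalent to your direct Plancherel computation), and the conclusion comes from the vector-valued local Calder\'on--Zygmund theory of \cite{CNS}, \cite{RRT} combined with Lemma~\ref{lemT2}, exactly as you outline. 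The one refinement you omit is that the paper works with the truncated square functions $\mathcal{G}_{\alpha,\beta}^{\gamma,N}$ over $t\in(1/N,N)$ and proves bounds uniform in $N$: the truncation makes the crude pointwise bound $\mathcal{G}_{\alpha,\beta}^{\gamma,N}(f)(\theta)\leq C_N\|f\|_{L^2(0,\pi)}$ finite, which is what allows the a.e.\ identification of the Calder\'on--Zygmund extension with the pointwise-defined square function for arbitrary $f\in L^p_w(0,\pi)$, after which monotone convergence in $N$ yields the stated boundedness of $g_{\alpha,\beta}^{\gamma}$ itself.
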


\begin{proof}
    For every $N \in \N$, we define
    $$\mathcal{G}_{\alpha,\beta}^{\gamma,N}(f)(\theta)
        = \left( \int_{1/N}^N \Big| t^{\gamma} \partial_t^\gamma P_t^{\alpha, \beta} (f) (\theta) \Big|^2 \frac{dt}{t} \right)^{1/2}, \quad \theta \in (0,\pi).$$
    We will show that, for every $1<p<\infty$ and $w \in A_p(0,\pi)$, there exists $C>0$ independent of $N \in \N$, such that
    \begin{equation}\label{eq:G1}
        \| \mathcal{G}_{\alpha,\beta}^{\gamma,N}(f)\|_{L^p_w(0,\pi)}
            \leq C \| f \|_{L^p_w(0,\pi)}, \quad f \in  L^p_w(0,\pi).
    \end{equation}
    From \eqref{eq:G1}, by using monotone convergence theorem, we deduce that for every $1<p<\infty$ and $w \in A_p(0,\pi)$,
    there exists $C>0$ satisfying that
    $$ \|g_{\alpha,\beta}^{\gamma}(f)\|_{L^p_w(0,\pi)}
            \leq C \| f \|_{L^p_w(0,\pi)}, \quad f \in  L^p_w(0,\pi).$$
    In order to show \eqref{eq:G1} we apply the local Calder\'on-Zygmund theory \cite{CNS} in a Banach valued setting
    \cite{RRT}.

    By proceeding as in \cite[Proposition 2.1]{BFRTT2} we obtain
    \begin{equation}\label{eq:G2}
        \frac{2^{2\gamma}}{\Gamma(2\gamma)} \int_0^\pi \int_0^\infty
            t^\gamma \partial_t^\gamma P_t^{\alpha,\beta}f(\theta) t^\gamma \partial_t^\gamma P_t^{\alpha,\beta}(\bar{g})(\theta) d\theta \frac{dt}{t}
        = \int_0^\pi f(\theta) \bar{g}(\theta) d\theta, \quad f,g \in S_{\alpha,\beta}.
    \end{equation}
    Thus, for every $N \in \N$, we get
    \begin{equation}\label{eq:G3}
        \| \mathcal{G}_{\alpha,\beta}^{\gamma,N}(f)\|_{L^2(0,\pi)}^2
            = \frac{\Gamma(2\gamma)}{2^{2\gamma}} \| f \|_{L^2(0,\pi)}^2, \quad f \in S_{\alpha,\beta}.
    \end{equation}
    Hence, $g_{\alpha,\beta}^{\gamma}$ and $\mathcal{G}_{\alpha,\beta}^{\gamma,N}$, $N \in \N$, can be extended from $S_{\alpha,\beta}$ to
    $L^2(0,\pi)$ as a bounded operators from  $L^2(0,\pi)$ into itself.

    Let $m \in \N$. According to \cite[Lemma 4]{BCCFR1} we have that
    \begin{equation}\label{eq:20.1}
        \Big| \partial_t^m[te^{-t^2/4u}] \Big|
            \leq C e^{-t^2/4u} u^{(1-m)/2}, \quad t,u \in (0,\infty).
    \end{equation}
    By \eqref{eq:3.1.1} and by taking into account that \eqref{eqS2} and \eqref{eq:20.1} the differentiation under the integral sign is justified, so
    we can write
    $$\partial_t^m P_t^{\alpha,\beta}(\theta,\varphi)
        = \frac{1}{\sqrt{4\pi}} \int_0^\infty \partial_t^m \Big[ \frac{t e^{-t^2/4u}}{u^{3/2}} \Big]
            W_u^{\alpha,\beta}(\theta,\varphi) du, \quad t>0 \text{ and } \theta, \varphi \in (0,\pi).$$

    From \eqref{eqS2} and \eqref{eq:20.1} it follows that
    \begin{align}\label{eq:G4}
        \Big| \partial_t^m P_t^{\alpha,\beta}(\theta,\varphi) \Big|
            \leq & C \int_0^\infty \frac{e^{-c(t^2+(\theta-\varphi)^2)/u}}{u^{(m+3)/2}} du \nonumber \\
            \leq & \frac{C}{(t^2+(\theta-\varphi)^2)^{(m+1)/2}}, \quad t>0 \text{ and } \theta, \varphi \in (0,\pi).
    \end{align}
    Let $f \in L^2(0,\pi)$. By \eqref{eq:G4} we obtain
    $$ \partial_t^m P_t^{\alpha,\beta}f(\theta)
        = \int_0^\pi \partial_t^m P_t^{\alpha, \beta}(\theta, \varphi)f(\varphi) d\varphi, \quad t>0 \text{ and } \theta \in (0,\pi). $$
    Thus, if $m-1 \leq \gamma < m$, \eqref{eq:G4} leads to
    \begin{align*}
        \Big| \partial_t^\gamma P_t^{\alpha,\beta}f(\theta) \Big|
            & \leq C \int_0^\infty \int_0^\pi \Big| \partial_t^m P_{t+s}^{\alpha,\beta}(\theta,\varphi) \Big| \ |f(\varphi)| d\varphi s^{m-\gamma-1} ds \\
            & \leq C \int_0^\infty \int_0^\pi \frac{|f(\varphi)|}{[(t+s)^2+(\theta-\varphi)^2]^{(m+1)/2} }  d\varphi s^{m-\gamma-1} ds \\
            & \leq C \int_0^\infty  \frac{s^{m-\gamma-1}}{(t+s)^{m+1}} ds \ \|f\|_{L^2(0,\pi)}
              \leq \frac{C}{t^{\gamma+1}} \|f\|_{L^2(0,\pi)}, \quad t>0 \text{ and } \theta \in (0,\pi).
    \end{align*}
    Hence, we obtain, for every $N \in \N$,
    \begin{equation}\label{eq:G5}
        \mathcal{G}_{\alpha,\beta}^{\gamma,N}(f)(\theta)
            \leq C  \Big( \int_{1/N}^N \frac{dt}{t^3} \Big)^{1/2} \|f\|_{L^2(0,\pi)}, \quad \theta \in (0,\pi).
    \end{equation}
    This estimate shows that, for every $N \in \N$, $\mathcal{G}_{\alpha,\beta}^{\gamma,N}$ is a bounded operator from $L^2(0,\pi)$ into itself.
    By \eqref{eq:G3} we conclude that, for every $N \in \N$,
    \begin{equation}\label{eq:G6}
        \|\mathcal{G}_{\alpha,\beta}^{\gamma,N}(f)\|_{L^2(0,\pi)}^2
            = \frac{\Gamma(2\gamma)}{2^{2\gamma}} \|f\|_{L^2(0,\pi)}^2, \quad f \in L^2(0,\pi).
    \end{equation}
    Note that \eqref{eq:G6}, in contrast with \eqref{eq:G5}, shows that the family $\{\mathcal{G}_{\alpha,\beta}^{\gamma,N}\}_{N \in \N}$ is bounded
    in $\mathcal{L}(L^2(0,\pi))$, the space of bounded operators from $L^2(0,\pi)$ into itself.

    Let $N \in \N$. We consider the operator
    $$T_{\alpha, \beta}^{\gamma, N}(f)(\theta)
        = \int_0^\pi K_{\alpha,\beta}^{\gamma, N}(\theta, \varphi) f(\varphi) d\varphi,$$
    where, for every $\theta, \varphi \in (0,\pi)$, $\theta \neq \varphi$,
    $$[K_{\alpha,\beta}^{\gamma, N}(\theta,\varphi)](t)
        = t^\gamma \partial_t^\gamma P_t^{\alpha,\beta}(\theta,\varphi), \quad t \in (1/N,N),$$
    and the integral is understood in the $L^2((1/N,N), dt/t)$-B\"ochner sense.

    From \eqref{eq:G4} we deduce that
    \begin{align}\label{eq:G7}
        \Big\| K_{\alpha,\beta}^{\gamma, N}(\theta,\varphi) \Big\|_{L^2((1/N,N), dt/t)}
            & \leq C \Big( \int_{1/N}^N \Big| t^\gamma \int_0^\infty \frac{s^{m-\gamma-1}}{((t+s)^2+(\theta-\varphi)^2)^{(m+1)/2}} ds\Big|^2 \frac{dt}{t} \Big)^{1/2} \nonumber\\
            & \leq C  \Big( \int_{1/N}^N \frac{t^{2\gamma-1}}{(t + |\theta-\varphi|)^{2\gamma+2}} dt \Big)^{1/2}
              \leq \frac{C}{|\theta-\varphi|}, \quad \theta, \varphi \in (0,\pi), \ \theta \neq \varphi.
    \end{align}
    Here $C>0$ does not depend on $N \in \N$.

    Let $f \in L^2(0,\pi)$ and $\theta \notin \supp(f)$. If $h \in L^2((1/N,N), dt/t)$, \eqref{eq:G7} allows us to write
    \begin{align*}
        & \int_{1/N}^N h(t) [T_{\alpha, \beta}^{\gamma, N}(f)(\theta)](t) \frac{dt}{t}
            =\int_0^\pi f(\varphi) \int_{1/N}^N h(t) [K_{\alpha, \beta}^{\gamma, N}(\theta,\varphi)](t) \frac{dt}{t} d\varphi \\
        & \qquad   = \int_0^\pi f(\varphi) \int_{1/N}^N h(t) t^\gamma \partial_t^\gamma P_t^{\alpha, \beta}(\theta,\varphi) \frac{dt}{t} d\varphi
          = \int_{1/N}^N h(t) \int_0^\pi t^\gamma \partial_t^\gamma P_t^{\alpha, \beta}(\theta,\varphi) f(\varphi)  d\varphi \frac{dt}{t}.
    \end{align*}
    Thus, we obtain
    $$[T_{\alpha, \beta}^{\gamma, N}(f)(\theta)](t)
        = t^\gamma \partial_t^\gamma P_t^{\alpha, \beta}(f)(\theta), \quad \text{a.e. } t \in (1/N,N).$$
    We are going to show, for every $N \in \N$ and $(\theta,\varphi) \in (0,\pi)^2 \setminus \mathcal{D}$, $\theta \neq \varphi$,
    \begin{equation}\label{eq:G8}
        \Big\| \partial_\theta \Big( t^\gamma \partial_t^\gamma P_t^{\alpha, \beta}(\theta,\varphi) \Big) \Big\|_{L^2((1/N,N), dt/t)}
            + \Big\| \partial_\varphi \Big( t^\gamma \partial_t^\gamma P_t^{\alpha, \beta}(\theta,\varphi) \Big) \Big\|_{L^2((1/N,N), dt/t)}
            \leq \frac{C}{|\theta-\varphi|^2},
    \end{equation}
    for a certain $C>0$ which does not depend on $N$ and the domain $\mathcal{D}$ is as in Figure~\ref{fig:regions}.

    To simplify we call
    $$\Phi_{\alpha,\beta}(t,z)
        = \frac{\sinh \frac{t}{2}}{(\cosh \frac{t}{2}-1+z)^{\alpha+\beta+2}}, \quad t,z>0,$$
    to one of the terms appearing in \eqref{eq:15.1}.
    According to \cite[Lemma 4.8]{NoSj2} we have that, for every $m \in \N$,
    \begin{equation}\label{eq:G9}
        \Big| \partial_t^m \Phi_{\alpha,\beta}(t,z) \Big|
            \leq C \left\{\begin{array}{ll}
                        (\cosh \frac{t}{2}-1+z)^{-\alpha-\beta-(m+3)/2}, & t \leq 1, \ z>0 \\
                        (\cosh \frac{t}{2}-1+z)^{-\alpha-\beta-1}, & t > 1, \ z>0,
                   \end{array} \right.
    \end{equation}
    and
    \begin{align}\label{eq:G10}
        & \Big| \partial_\theta \partial_t^m \Phi_{\alpha,\beta}(t,q(\theta,\varphi,u,v)) \Big|
            + \Big| \partial_\varphi \partial_t^m \Phi_{\alpha,\beta}(t,q(\theta,\varphi,u,v)) \Big|  \nonumber \\
        & \qquad \leq C \left\{\begin{array}{ll}
                        (\cosh \frac{t}{2}-1+q(\theta,\varphi,u,v))^{-\alpha-\beta-(m+4)/2}, & t \leq 1, \ \theta,\varphi \in (0,\pi), \ -1<u,v<1 \\
                        (\cosh \frac{t}{2}-1+q(\theta,\varphi,u,v))^{-\alpha-\beta-3/2}, & t > 1, \ \theta,\varphi \in (0,\pi), \ -1<u,v<1.
                   \end{array} \right.
    \end{align}
    Let $m \in \N$. By using \eqref{eq:G9} and \cite[Lemma 4.4]{NoSj2} we get
    \begin{align*}
        & \int_{-1}^1 \int_{-1}^1 \Big| \partial_t^m \Phi_{\alpha,\beta}(t,q(\theta,\varphi,u,v))\Big| d\Pi_\alpha(u) d\Pi_\beta(v) \\
        & \qquad \leq C \left\{\begin{array}{ll}
                            \displaystyle \int_{-1}^1 \int_{-1}^1 \frac{d\Pi_\alpha(u) d\Pi_\beta(v)}{(\cosh \frac{t}{2}-1+q(\theta,\varphi,u,v))^{\alpha+\beta+(m+3)/2}}, & t \leq 1 \\
                            \quad \\
                            \displaystyle \int_{-1}^1 \int_{-1}^1 \frac{d\Pi_\alpha(u) d\Pi_\beta(v)}{(\cosh \frac{t}{2}-1+q(\theta,\varphi,u,v))^{\alpha+\beta+1}}, & t > 1 \\
                        \end{array} \right. \\
        & \qquad \leq \frac{C}{(\cosh \frac{t}{2}-1)^{\alpha+\beta+1}}, \quad t>0 \text{ and } \theta, \varphi \in (0,\pi).
    \end{align*}
    Thus, from \eqref{eq:15.1} we can write for each $\theta,\varphi \in (0,\pi)$ and $t>0$,
    \begin{align*}
        & \partial_t^m P_t^{\alpha, \beta}(\theta,\varphi) \\
        & \qquad = C_{\alpha,\beta} \Big(\sin \frac{\theta}{2} \sin \frac{\varphi}{2}\Big)^{\alpha+1/2} \Big(\cos \frac{\theta}{2} \cos \frac{\varphi}{2}\Big)^{\beta+1/2}
            \int_{-1}^1 \int_{-1}^1 \partial_t^m \Phi_{\alpha,\beta}(t,q(\theta,\varphi,u,v)) d\Pi_\alpha(u) d\Pi_\beta(v).
    \end{align*}
    Assume that $m \in \N$ is such that $m-1 \leq \gamma < m$. From \eqref{eq:G10} and
    \cite[trigonometric identities in p. 738]{NoSj2} we deduce, for every $\theta,\varphi \in (0,\pi)$ and $t>0$,
    \begin{align*}
        & \int_0^\infty s^{m-\gamma-1}\int_{-1}^1 \int_{-1}^1
            \Big| \partial_\theta \partial_t^m \Phi_{\alpha,\beta}(t+s,q(\theta,\varphi,u,v)) \Big| d\Pi_\alpha(u) d\Pi_\beta(v) ds \\
        & \qquad \leq C \Big\{ \int_0^{\max\{0,1-t\}} \frac{s^{m-\gamma-1}}{(\cosh \frac{t+s}{2}-1+2 \sin^2 \frac{\theta-\varphi}{4})^{\alpha+\beta+(m+4)/2}} ds \\
        & \qquad \qquad + \int_{\max\{0,1-t\}}^1 \frac{s^{m-\gamma-1}}{(\cosh \frac{t+s}{2}-1+2 \sin^2 \frac{\theta-\varphi}{4})^{\alpha+\beta+3/2}} ds \\
        & \qquad \qquad + \int_1^\infty s^{m-\gamma-1} e^{-c(\alpha+\beta+3/2)(t+s)} ds \Big\}
        < \infty.
    \end{align*}
    Hence, we can write for $\theta,\varphi \in (0,\pi)$ and $t>0$,
    \begin{align*}
        & t^\gamma \partial_\theta  \partial_t^\gamma P_t^{\alpha, \beta}(\theta,\varphi)
            = C_{\alpha,\beta} \Big(\sin \frac{\theta}{2} \sin \frac{\varphi}{2}\Big)^{\alpha+1/2} \Big(\cos \frac{\theta}{2} \cos \frac{\varphi}{2}\Big)^{\beta+1/2} \frac{e^{-i(m-\gamma)\pi}}{\Gamma(m-\gamma)} t^\gamma \\
        & \qquad \times \Big[ \int_0^\infty s^{m-\gamma-1} \int_{-1}^1 \int_{-1}^1 \partial_\theta \partial_t^m \Phi_{\alpha,\beta}(t+s,q(\theta,\varphi,u,v)) d\Pi_\alpha(u) d\Pi_\beta(v)ds \\
        & \qquad  + \Big( \frac{2\alpha+1}{4} \frac{\cos \frac{\theta}{2}}{\sin \frac{\theta}{2}} -\frac{2\beta+1}{4} \frac{\sin \frac{\theta}{2}}{\cos \frac{\theta}{2}} \Big)
                        \int_0^\infty s^{m-\gamma-1} \int_{-1}^1 \int_{-1}^1 \partial_t^m \Phi_{\alpha,\beta}(t+s,q(\theta,\varphi,u,v)) d\Pi_\alpha(u) d\Pi_\beta(v)ds\Big].
    \end{align*}
    By proceeding as in \cite[pp. 747-748]{NoSj2} (see also the proof of Proposition~\ref{propT2}),
    \eqref{eq:G10} and Minkowski's inequality leads to
    \begin{align}\label{eq:G11}
        & \Big\| \Big(\sin \frac{\theta}{2} \sin \frac{\varphi}{2}\Big)^{\alpha+1/2} \Big(\cos \frac{\theta}{2} \cos \frac{\varphi}{2}\Big)^{\beta+1/2} t^\gamma \nonumber \\
        & \qquad \qquad \times \int_0^\infty s^{m-\gamma-1} \int_{-1}^1 \int_{-1}^1 \partial_\theta \partial_t^m \Phi_{\alpha,\beta}(t+s,q(\theta,\varphi,u,v)) d\Pi_\alpha(u) d\Pi_\beta(v)ds    \Big\|_{L^2((0,\infty),dt/t)} \nonumber \\
        & \qquad \leq \Big(\sin \frac{\theta}{2} \sin \frac{\varphi}{2}\Big)^{\alpha+1/2} \Big(\cos \frac{\theta}{2} \cos \frac{\varphi}{2}\Big)^{\beta+1/2} \int_0^\infty s^{m-\gamma-1} \int_{-1}^1 \int_{-1}^1 \nonumber \\
        & \qquad \qquad \times \Big\|  t^\gamma  \partial_\theta \partial_t^m \Phi_{\alpha,\beta}(t+s,q(\theta,\varphi,u,v)) \Big\|_{L^2((0,\infty),dt/t)} d\Pi_\alpha(u) d\Pi_\beta(v)ds \nonumber\\
        & \qquad \leq C \Big(\sin \frac{\theta}{2} \sin \frac{\varphi}{2}\Big)^{\alpha+1/2} \Big(\cos \frac{\theta}{2} \cos \frac{\varphi}{2}\Big)^{\beta+1/2} \int_{-1}^1 \int_{-1}^1
            \frac{d\Pi_\alpha(u) d\Pi_\beta(v)ds}{q(\theta,\varphi,u,v)^{\alpha+\beta+2}}  \nonumber\\
        & \qquad \leq \frac{C}{|\theta - \varphi|^2}, \quad \theta,\varphi \in (0,\pi), \ \theta \neq \varphi.
    \end{align}
    In a similar way, by using \eqref{eq:G9} we obtain
        \begin{align}\label{eq:G12}
        & \Big\| \Big(\sin \frac{\theta}{2} \sin \frac{\varphi}{2}\Big)^{\alpha+1/2} \Big(\cos \frac{\theta}{2} \cos \frac{\varphi}{2}\Big)^{\beta+1/2} t^\gamma \nonumber \\
        & \qquad \qquad \times \int_0^\infty s^{m-\gamma-1} \int_{-1}^1 \int_{-1}^1 \partial_t^m \Phi_{\alpha,\beta}(t+s,q(\theta,\varphi,u,v)) d\Pi_\alpha(u) d\Pi_\beta(v)ds    \Big\|_{L^2((0,\infty),dt/t)} \nonumber \\
        & \qquad \leq C \Big(\sin \frac{\theta}{2} \sin \frac{\varphi}{2}\Big)^{\alpha+1/2} \Big(\cos \frac{\theta}{2} \cos \frac{\varphi}{2}\Big)^{\beta+1/2} \int_{-1}^1 \int_{-1}^1
            \frac{d\Pi_\alpha(u) d\Pi_\beta(v)ds}{q(\theta,\varphi,u,v)^{\alpha+\beta+3/2}}  \nonumber\\
        & \qquad \leq \frac{C}{|\theta - \varphi|} \leq \frac{C}{|\theta - \varphi|^2}, \quad \theta,\varphi \in (0,\pi), \ \theta \neq \varphi.
    \end{align}
    Combining \eqref{eq:G11} and \eqref{eq:G12} with \eqref{eq:17.1}, \eqref{eq:17.2}, \eqref{eq:17.3} and \eqref{eq:17.4},
    we deduce that
    $$\Big\| \partial_\theta K_{\alpha,\beta}^\gamma(\theta,\varphi) \Big\|_{L^2((0,\infty),dt/t)}
        \leq \frac{C}{|\theta - \varphi|^2}, \quad (\theta, \varphi) \in (0,\pi)^2 \setminus \mathcal{D}.$$
    The same procedure allows us to prove that
    $$\Big\| \partial_\varphi K_{\alpha,\beta}^\gamma(\theta,\varphi) \Big\|_{L^2((0,\infty),dt/t)}
        \leq \frac{C}{|\theta - \varphi|^2}, \quad (\theta, \varphi) \in (0,\pi)^2 \setminus \mathcal{D}.$$
    Thus, \eqref{eq:G8} is established.

    By using now the local Calder\'on-Zygmund theory for singular integrals (see \cite{CNS}) in the $L^2((1/N,N),dt/t)$-setting
    and by taking into account Lemma~\ref{lemT2}, we conclude that, for every $1<p<\infty$ and $w \in A_p(0,\pi)$,
    the operator $T_{\alpha,\beta}^{\gamma,N}$ can be extended from $L^2(0,\pi) \cap L^p_w(0,\pi)$ to $L^p_w(0,\pi)$
    as a bounded operator $\widetilde{T}_{\alpha,\beta}^{\gamma,N}$ from $L^p_w(0,\pi)$ into
    $L^p_w\big((0,\pi); L^2((1/N,N),dt/t) \big)$, and there exists $C>0$, which does not depend on $N$, such that
    \begin{equation}\label{eq:G13}
        \Big\| \widetilde{T}_{\alpha,\beta}^{\gamma,N}(f) \Big\|_{L^p_w\big((0,\pi); L^2((1/N,N),dt/t) \big)}
            \leq C \|f\|_{L^p_w(0,\pi)}, \quad f \in L^p_w(0,\pi).
    \end{equation}
    Let $f \in L^p_w(0,\pi)$ where $1<p<\infty$ and $w \in A_p(0,\pi)$. We take a sequence
    $(f_n)_{n \in \N} \subseteq L^p_w(0,\pi) \cap L^2(0,\pi)$ such that
    $$f_n
        \longrightarrow f, \quad \text{as } n \to \infty, \text{ in } L^p_w(0,\pi).$$
    As in \eqref{eq:G5} we obtain that
    $$\mathcal{G}_{\alpha,\beta}^{\gamma,N}(f-f_n)(\theta)
        \leq C \|f-f_n\|_{L^p_w(0,\pi)}, \quad n \in \N \text{ and } \theta \in (0,\pi).$$
    Hence,
    $$\mathcal{G}_{\alpha,\beta}^{\gamma,N}(f_n)(\theta)
        \longrightarrow \mathcal{G}_{\alpha,\beta}^{\gamma,N}(f)(\theta), \quad \text{as } n \to \infty \text{ for every } \theta \in (0,\pi).$$
    On the other hand,
    $$\widetilde{T}_{\alpha,\beta}^{\gamma,N}(f)
        = \lim_{n \to \infty} T_{\alpha,\beta}^{\gamma,N}(f_n), \quad \text{in } L^p_w\big((0,\pi); L^2((1/N,N),dt/t) \big). $$
    Then, there exists a monotone function $\phi : \N \longrightarrow \N$ such that
    $$T_{\alpha,\beta}^{\gamma,N}(f_{\phi(n)})(\theta)
        \longrightarrow \widetilde{T}_{\alpha,\beta}^{\gamma,N}(f)(\theta),
        \quad \text{as } n \to \infty, \text{ in } L^2((1/N,N),dt/t),$$
    for almost every $\theta \in (0,\pi)$. This implies that
    $$\mathcal{G}_{\alpha,\beta}^{\gamma,N}(f_{\phi(n)})(\theta)
        \longrightarrow \Big\| \widetilde{T}_{\alpha,\beta}^{\gamma,N}(f)(\theta) \Big\|_{L^2((1/N,N),dt/t)},
        \quad \text{as } n \to \infty,$$
    for almost every $\theta \in (0,\pi)$. We conclude that
    $$\mathcal{G}_{\alpha,\beta}^{\gamma,N}(f)(\theta)
        = \Big\| \widetilde{T}_{\alpha,\beta}^{\gamma,N}(f)(\theta) \Big\|_{L^2((1/N,N),dt/t)}, \quad \text{a.e. } \theta \in (0,\pi),$$
    and from \eqref{eq:G13} we deduce \eqref{eq:G1}.

    Thus the proof of this proposition is completed.
\end{proof}

By using \cite[Theorem 1.3]{CFMP} from Proposition~\ref{PropG1} we infer the following.

\begin{Cor}\label{Cor:G2}
    Let $\alpha,\beta \geq -1/2$ and $\gamma>0$. Suppose that $p \in \mathcal{B}(0,\pi)$.
    Then, the fractional square function $g_{\alpha,\beta}^\gamma$
    defines a bounded (quasi-linear) operator from $L^{p(\cdot)}(0,\pi)$ into itself.
\end{Cor}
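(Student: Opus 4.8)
The plan is to deduce the $L^{p(\cdot)}$-boundedness directly from the weighted $L^p$-estimates already contained in Proposition~\ref{PropG1}, by invoking the Rubio de Francia extrapolation theorem in the variable exponent setting, as formulated in \cite[Theorem 1.3]{CFMP}. No new kernel estimate is needed; the entire content of the corollary lies in transferring a family of $A_p$-weighted inequalities to a single variable-exponent inequality.

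First I would cast the situation in the language of pairs of functions, which is the natural form in which the extrapolation result is stated. For $f$ in a convenient dense class (for instance $S_{\alpha,\beta}$, on which the opening remarks of Section~\ref{sec:5} guarantee that $g_{\alpha,\beta}^\gamma(f)(\theta)<\infty$ for every $\theta\in(0,\pi)$), I would consider the pair $\big(g_{\alpha,\beta}^\gamma(f),|f|\big)$. Proposition~\ref{PropG1} says precisely that for every $1<p<\infty$ and every $w\in A_p(0,\pi)$ there is a constant $C$, depending only on $p$ and $w$, such that
$$\int_0^\pi \big(g_{\alpha,\beta}^\gamma(f)(\theta)\big)^p w(\theta)\,d\theta
    \le C\int_0^\pi |f(\theta)|^p w(\theta)\,d\theta.$$
This is exactly the input hypothesis required to start the extrapolation machinery of \cite{CFMP}.

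Next I would apply \cite[Theorem 1.3]{CFMP} to this family of pairs, obtaining the variable exponent inequality $\|g_{\alpha,\beta}^\gamma(f)\|_{L^{p(\cdot)}(0,\pi)}\le C\|f\|_{L^{p(\cdot)}(0,\pi)}$. The exponent condition appearing in that theorem can be replaced by the simpler requirement $p\in\mathcal{B}(0,\pi)$: as explained in Section~\ref{sec:intro}, this follows from \cite[Theorem 1.2]{CFMP} together with the fact that any $p\in\mathcal{B}(0,\pi)$ admits an extension $\widetilde p\in\mathcal{B}(\R)$. A final density argument then promotes the estimate from $S_{\alpha,\beta}$ to all of $L^{p(\cdot)}(0,\pi)$, using that $S_{\alpha,\beta}$ is dense in $L^{p(\cdot)}(0,\pi)$ by Proposition~\ref{propS1}, together with the quasi-linearity of $g_{\alpha,\beta}^\gamma$ and Fatou's lemma.

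The only point demanding a little care — rather than a genuine obstacle — is that $g_{\alpha,\beta}^\gamma$ is merely quasi-linear and not linear. This causes no trouble, because the extrapolation theorem of \cite{CFMP} is phrased for families of pairs of functions and uses no algebraic structure of the underlying operator; sublinearity is needed only in the concluding approximation step, where it is harmless.
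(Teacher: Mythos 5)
Your proposal is correct and follows exactly the paper's route: the paper deduces Corollary~\ref{Cor:G2} directly from Proposition~\ref{PropG1} by invoking the extrapolation theorem \cite[Theorem 1.3]{CFMP}, with the exponent hypothesis reformulated as $p\in\mathcal{B}(0,\pi)$ in the manner explained in Section~\ref{sec:intro}. The additional details you supply (pairs of functions on the dense class $S_{\alpha,\beta}$, and the concluding density/Fatou step) are a careful spelling-out of what the paper leaves implicit, not a different argument.
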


Also Proposition~\ref{PropG1} and the polarization formula \eqref{eq:G2} allow us to obtain the converse inequality for
$g_{\alpha,\beta}^\gamma$.

\begin{Cor}\label{Cor:G3}
    Let $\alpha,\beta \geq -1/2$ and $\gamma>0$.
    \begin{itemize}
        \item[$(a)$] If $1<p<\infty$ and $w \in A_p(0,\pi)$ then, for a certain $C>0$,
        $$\|f\|_{L^p_w(0,\pi)}
            \leq C \| g_{\alpha,\beta}^\gamma(f) \|_{L^p_w(0,\pi)}, \quad f \in L^p_w(0,\pi).$$
        \item[$(b)$] If $p \in \mathcal{B}(0,\pi)$, then there exits $C>0$ such that
        $$\|f\|_{L^{p(\cdot)}(0,\pi)}
            \leq C \| g_{\alpha,\beta}^\gamma(f) \|_{L^{p(\cdot)}(0,\pi)}, \quad f \in L^{p(\cdot)}(0,\pi).$$
    \end{itemize}
\end{Cor}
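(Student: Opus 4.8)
The plan is to prove part $(a)$ by a duality argument built on the polarization identity \eqref{eq:G2} together with the direct boundedness in Proposition~\ref{PropG1}, and then to deduce part $(b)$ from $(a)$ via the extrapolation theorem \cite[Theorem 1.3]{CFMP}. Fix $1<p<\infty$ and $w\in A_p(0,\pi)$, and write $\sigma=w^{1-p'}$, so that $\sigma\in A_{p'}(0,\pi)$ and $L^{p'}_\sigma(0,\pi)$ is the dual of $L^p_w(0,\pi)$ under the unweighted pairing $\int_0^\pi f\bar g\,d\theta$. It suffices to prove the inequality first for $f\in S_{\alpha,\beta}$ and then to extend it by density.

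Let $f,g\in S_{\alpha,\beta}$. Starting from \eqref{eq:G2} and applying the Cauchy--Schwarz inequality in the variable $t$ with respect to $dt/t$ pointwise in $\theta$, I obtain
$$\Big| \int_0^\pi f(\theta)\bar g(\theta)\,d\theta \Big|
    \leq C \int_0^\pi g_{\alpha,\beta}^\gamma(f)(\theta)\, g_{\alpha,\beta}^\gamma(g)(\theta)\,d\theta.$$
A weighted Hölder inequality (splitting the integrand as $(g_{\alpha,\beta}^\gamma(f)\,w^{1/p})(g_{\alpha,\beta}^\gamma(g)\,w^{-1/p})$) gives the bound $C\,\|g_{\alpha,\beta}^\gamma(f)\|_{L^p_w(0,\pi)}\,\|g_{\alpha,\beta}^\gamma(g)\|_{L^{p'}_\sigma(0,\pi)}$, and Proposition~\ref{PropG1} applied with the exponent $p'$ and the weight $\sigma\in A_{p'}(0,\pi)$ yields $\|g_{\alpha,\beta}^\gamma(g)\|_{L^{p'}_\sigma(0,\pi)}\leq C\|g\|_{L^{p'}_\sigma(0,\pi)}$. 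Hence $|\int_0^\pi f\bar g\,d\theta|\leq C\,\|g_{\alpha,\beta}^\gamma(f)\|_{L^p_w(0,\pi)}\,\|g\|_{L^{p'}_\sigma(0,\pi)}$. Since $S_{\alpha,\beta}$ is dense in $L^{p'}_\sigma(0,\pi)$ (Proposition~\ref{Prop:B1}$(i)$), taking the supremum over $g\in S_{\alpha,\beta}$ with $\|g\|_{L^{p'}_\sigma(0,\pi)}\leq 1$ computes the full dual norm and gives $\|f\|_{L^p_w(0,\pi)}\leq C\,\|g_{\alpha,\beta}^\gamma(f)\|_{L^p_w(0,\pi)}$ for $f\in S_{\alpha,\beta}$. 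Finally, density of $S_{\alpha,\beta}$ in $L^p_w(0,\pi)$ (Proposition~\ref{Prop:B1}$(i)$) together with the continuity of $g_{\alpha,\beta}^\gamma$ from Proposition~\ref{PropG1} extends the estimate to all $f\in L^p_w(0,\pi)$, proving $(a)$.

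For part $(b)$, I apply the extrapolation machinery of \cite[Theorem 1.3]{CFMP} to the family of pairs $\big(|f|,\,g_{\alpha,\beta}^\gamma(f)\big)$: part $(a)$ shows that for the fixed exponent $p$ the inequality $\int_0^\pi |f|^p w\,d\theta\leq C\int_0^\pi \big(g_{\alpha,\beta}^\gamma(f)\big)^p w\,d\theta$ holds for \emph{every} $w\in A_p(0,\pi)$, with $C$ independent of $w$. As remarked in Section~\ref{sec:intro}, the hypothesis in \cite[Theorem 1.3]{CFMP} may be replaced by $p\in\mathcal{B}(0,\pi)$, so extrapolation yields $\|f\|_{L^{p(\cdot)}(0,\pi)}\leq C\|g_{\alpha,\beta}^\gamma(f)\|_{L^{p(\cdot)}(0,\pi)}$ for $p\in\mathcal{B}(0,\pi)$. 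Alternatively, $(b)$ can be obtained directly by repeating the duality argument in the variable exponent setting, using the norm conjugate formula \cite[Corollary 3.2.14]{DHHR}, the variable exponent Hölder inequality, and Corollary~\ref{Cor:G2} applied with the conjugate exponent $p'(\cdot)$ (which again lies in $\mathcal{B}(0,\pi)$ since $p\in\mathcal{B}(0,\pi)$).

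The routine analytic points underlying the argument are the justification of the pointwise Cauchy--Schwarz step and the convergence and interchange of the $\theta$- and $t$-integrals in \eqref{eq:G2}; these are controlled by the kernel estimates \eqref{eq:G4} and \eqref{eq:G7} already established in the proof of Proposition~\ref{PropG1}. I expect the only genuinely delicate point to be the density/approximation bookkeeping: one must verify that restricting the dual test functions to $S_{\alpha,\beta}$ still recovers the full norm of $L^p_w(0,\pi)$ (respectively $L^{p(\cdot)}(0,\pi)$) and that the passage from $f\in S_{\alpha,\beta}$ to general $f$ preserves the inequality, both of which follow from the density statements in Propositions~\ref{propS1} and \ref{Prop:B1} and the continuity of $g_{\alpha,\beta}^\gamma$.
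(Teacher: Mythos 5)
Your proof is correct, and its engine coincides with the paper's: the polarization identity \eqref{eq:G2}, pointwise Cauchy--Schwarz in $t$, H\"older's inequality, boundedness of $g_{\alpha,\beta}^\gamma$ on the dual space (Proposition~\ref{PropG1}), and density of $S_{\alpha,\beta}$. The packaging is reversed, though: the paper proves part $(b)$ in detail --- it first extends \eqref{eq:G2} to all of $L^{p(\cdot)}(0,\pi)\times L^{p'(\cdot)}(0,\pi)$ by noting that both bilinear forms $T$ and $L$ are continuous there, then takes the supremum over the whole unit ball of $L^{p'(\cdot)}(0,\pi)$ via the norm conjugate formula \cite[Corollary 3.2.14]{DHHR} --- and dismisses $(a)$ as ``similar''. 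You prove $(a)$ in the weighted setting, keeping the dual test functions inside $S_{\alpha,\beta}$ and recovering the full dual norm by density (Proposition~\ref{Prop:B1}$(i)$), which is equally valid, and then deduce $(b)$ by extrapolating the pairs $\big(|f|,\,g_{\alpha,\beta}^\gamma(f)\big)$ via \cite[Theorem 1.3]{CFMP}; that mechanism is exactly how the paper passes from Proposition~\ref{PropG1} to Corollary~\ref{Cor:G2}, and your stated alternative (running the duality directly in the variable exponent scale, with Corollary~\ref{Cor:G2} applied to $p'(\cdot)$) is verbatim the paper's argument, so both of your routes are squarely within the paper's methodology. Two small refinements: in the extrapolation step the constant in $(a)$ is \emph{not} independent of $w$ --- what \cite[Theorem 1.3]{CFMP} needs is that it be controlled by the $A_p$ characteristic of $w$, which your argument does provide (Proposition~\ref{PropG1}, weighted H\"older and weighted duality all have this property), so you should phrase it that way; and your remark that $p\in\mathcal{B}(0,\pi)$ forces $p'(\cdot)\in\mathcal{B}(0,\pi)$ is a fact the paper uses silently when it bounds $\|g_{\alpha,\beta}^\gamma(\overline{g})\|_{L^{p'(\cdot)}(0,\pi)}$, so making it explicit is a genuine improvement.
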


\begin{proof}
    We are going to prove $(b)$, $(a)$ can be deduced in a similar way.

    For every $f\in L^{p(\cdot)}(0,\pi)$ and $g\in L^{p'(\cdot)}(0,\pi)$, we consider the bilinear operators
    $$T(f,g)
        = \int_0^\pi f(\theta) \overline{g}(\theta) d\theta, $$
    and
    $$L(f,g)
        = \frac{2^{2\gamma}}{\Gamma(2\gamma)} \int_0^\pi \int_0^\infty t^\gamma \partial_t^\gamma P_t^{\alpha,\beta}f(\theta) t^\gamma \partial_t^\gamma P_t^{\alpha,\beta}(\overline{g})(\theta) \frac{dt}{t} d\theta.$$
    By using H\"older's inequality in the variable exponent setting
    (see \cite[Lemma 3.2.20]{DHHR}) we can see that $T$ and $L$ are bounded from
    $L^{p(\cdot)}(0,\pi) \times L^{p'(\cdot)}(0,\pi)$ into $\C$. Since $S_{\alpha,\beta}$ is a dense subspace of
    $L^{p(\cdot)}(0,\pi)$ and $L^{p'(\cdot)}(0,\pi)$ (Proposition~\ref{propS1}), equality \eqref{eq:G2}
    holds for every $f \in L^{p(\cdot)}(0,\pi)$ and $g \in L^{p'(\cdot)}(0,\pi)$.

    Let $f \in L^{p(\cdot)}(0,\pi)$. According to the norm conjugate formula (\cite[Corollary 3.2.14]{DHHR}), by Proposition~\ref{PropG1}
    we can write
    \begin{align*}
        \|f\|_{L^{p(\cdot)}(0,\pi)}
            & \leq 2 \sup_{\substack{g \in L^{p'(\cdot)}(0,\pi) \\ \|g\|_{L^{p'(\cdot)}(0,\pi)} \leq 1 }}
                \Big| \int_0^\pi f(\theta) \overline{g}(\theta) d\theta \Big| \\
            & \leq C \sup_{\substack{g \in L^{p'(\cdot)}(0,\pi) \\ \|g\|_{L^{p'(\cdot)}(0,\pi)} \leq 1 }}
                \Big| \int_0^\pi \int_0^\infty t^\gamma \partial_t^\gamma P_t^{\alpha,\beta}f(\theta) t^\gamma \partial_t^\gamma P_t^{\alpha,\beta}(\overline{g})(\theta) \frac{dt}{t} d\theta \Big| \\
            & \leq C \sup_{\substack{g \in L^{p'(\cdot)}(0,\pi) \\ \|g\|_{L^{p'(\cdot)}(0,\pi)} \leq 1 }}
                \int_0^\pi g_{\alpha,\beta}^\gamma(f)(\theta) g_{\alpha,\beta}^\gamma(\overline{g})(\theta) d\theta  \\
            & \leq C \sup_{\substack{g \in L^{p'(\cdot)}(0,\pi) \\ \|g\|_{L^{p'(\cdot)}(0,\pi)} \leq 1 }}
                \| g_{\alpha,\beta}^\gamma(f)\|_{L^{p(\cdot)}(0,\pi)} \  \|g_{\alpha,\beta}^\gamma(\overline{g}) \|_{L^{p'(\cdot)}(0,\pi)}
             \leq C \| g_{\alpha,\beta}^\gamma(f)\|_{L^{p(\cdot)}(0,\pi)}.
    \end{align*}
\end{proof}

\begin{Rem}
    Note that Proposition~\ref{PropG1} together with Corollaries~\ref{Cor:G2} and \ref{Cor:G3} tell us that the new norms
    $||| \cdot |||_{L^{p}_w(0,\pi)}$ and $||| \cdot |||_{L^{p(\cdot)}(0,\pi)}$ defined by
    \begin{align*}
        ||| f |||_{L^p_w(0,\pi)}
            & = \| g_{\alpha,\beta}^\gamma(f) \|_{L^p_w(0,\pi)}, \quad f \in L^p_w(0,\pi),\\
        ||| f |||_{L^{p(\cdot)}(0,\pi)}
            & = \| g_{\alpha,\beta}^\gamma(f) \|_{L^{p(\cdot)}(0,\pi)}, \quad f \in L^{p(\cdot)}(0,\pi),
    \end{align*}
    are equivalent to $\| \cdot \|_{L^p_w(0,\pi)}$ on $L^p_w(0,\pi)$ and to $\| \cdot \|_{L^{p(\cdot)}(0,\pi)}$ on $L^{p(\cdot)}(0,\pi)$,
    respectively, provided that the specified conditions are satisfied.
\end{Rem}

\begin{proof}[Proof of Theorem~\ref{Th2}]
    We first establish that $H_{\alpha,\beta}^{\gamma/2,p(\cdot)}(0,\pi) \subseteq T_{\alpha, \beta}^{\gamma,k,p(\cdot)}(0,\pi)$.
    Assume that $f,g \in S_{\alpha,\beta}$ are such that $f=\mathcal{L}^{-\gamma}_{\alpha,\beta}g$.
    We can write
    \begin{align*}\label{eq:G14}
        \partial_t^k P_t^{\alpha,\beta} \big( \mathcal{L}_{\alpha,\beta}^{-\gamma/2} g \big)
            & = (-1)^k \sum_{n=0}^\infty \frac{e^{-t \sqrt{\lambda_n^{\alpha,\beta}}}}{(\lambda_n^{\alpha,\beta})^{(\gamma-k)/2}} c_n^{\alpha,\beta}(g) \phi_n^{\alpha,\beta}
            = e^{i \pi \gamma} \partial_t^{k-\gamma}P_t^{\alpha,\beta}g, \quad t>0,
    \end{align*}
    because $\partial_t^\delta e^{-at}=e^{i \pi \delta} a^\delta e^{-at}$, $\delta,a,t>0$.
    Hence, we get
    \begin{equation}\label{eq:G15}
        g_{\alpha,\beta}^{\gamma,k}\big( \mathcal{L}_{\alpha,\beta}^{-\gamma/2} g \big)
            = g_{\alpha,\beta}^{k-\gamma}(g).
    \end{equation}
    From \eqref{eq:G15} and Corollaries~\ref{Cor:G2} and \ref{Cor:G3} we deduce that, for every $f \in S_{\alpha,\beta}$,
    \begin{equation}\label{eq:G16}
        \frac{1}{C} \|f\|_{H_{\alpha,\beta}^{\gamma/2,p(\cdot)}(0,\pi)}
            \leq \| g_{\alpha,\beta}^{\gamma,k}(f)\|_{L^{p(\cdot)}(0,\pi)}
            \leq C \|f\|_{H_{\alpha,\beta}^{\gamma/2,p(\cdot)}(0,\pi)},
    \end{equation}
    for a certain $C>0$. Since $S_{\alpha,\beta}$ is a dense subspace of $H_{\alpha,\beta}^{\gamma/2,p(\cdot)}(0,\pi)$,
    $g_{\alpha,\beta}^{\gamma,k}$ can be extended to $H_{\alpha,\beta}^{\gamma/2,p(\cdot)}(0,\pi)$ as a bounded operator
    $\widetilde{g}_{\alpha,\beta}^{\gamma,k}$ from $H_{\alpha,\beta}^{\gamma/2,p(\cdot)}(0,\pi)$ into $L^{p(\cdot)}(0,\pi)$.
    Moreover, \eqref{eq:G16} holds for every $f \in H_{\alpha,\beta}^{\gamma/2,p(\cdot)}(0,\pi)$ when $g_{\alpha,\beta}^{\gamma,k}$
    is replaced by $\widetilde{g}_{\alpha,\beta}^{\gamma,k}$.

    We are going to see that $\widetilde{g}_{\alpha,\beta}^{\gamma,k} = g_{\alpha,\beta}^{\gamma,k}$. For every $N \in \N$, we define
    $$\mathcal{G}_{\alpha,\beta}^{\gamma,k,N}(f)(\theta)
        = \Big( \int_{1/N}^N \big| t^{k-\gamma} \partial_t^k P_t^{\alpha,\beta}f(\theta)\big|^2 \frac{dt}{t}\Big)^{1/2}, \quad \theta \in (0,\pi).$$
    Let $N \in \N$. From \eqref{eq:G16} it follows that $\mathcal{G}_{\alpha,\beta}^{\gamma,k,N}$ can be extended to
    $H_{\alpha,\beta}^{\gamma/2,p(\cdot)}(0,\pi)$ as a bounded operator $\widetilde{\mathcal{G}}_{\alpha,\beta}^{\gamma,k,N}$
    from $H_{\alpha,\beta}^{\gamma/2,p(\cdot)}(0,\pi)$ into $L^{p(\cdot)}(0,\pi)$ and
    $$\|\widetilde{\mathcal{G}}_{\alpha,\beta}^{\gamma,k,N}(f)\|_{L^{p(\cdot)}(0,\pi)}
            \leq C \|f\|_{H_{\alpha,\beta}^{\gamma/2,p(\cdot)}(0,\pi)}, \quad f \in H_{\alpha,\beta}^{\gamma/2,p(\cdot)}(0,\pi).$$
    Note that $C$ does not depend on $N$. Let $f \in H_{\alpha,\beta}^{\gamma/2,p(\cdot)}(0,\pi)$. We choose a sequence
    $(f_n)_{n \in \N} \subseteq S_{\alpha,\beta}$ such that
    $$f_n
        \longrightarrow f, \quad \text{as } n \to \infty, \text{ in } H_{\alpha,\beta}^{\gamma/2,p(\cdot)}(0,\pi).$$
    Then,
    $$\mathcal{G}_{\alpha,\beta}^{\gamma,k,N}(f_n)
        \longrightarrow \widetilde{\mathcal{G}}_{\alpha,\beta}^{\gamma,k,N}(f), \quad \text{as } n \to \infty, \text{ in } L^{p(\cdot)}(0,\pi).$$
    Since, $L^{p(\cdot)}(0,\pi) \subseteq L^{p_-}(0,\pi)$, there exists a monotone function $\phi : \N \longrightarrow \N$ such that
    $$\mathcal{G}_{\alpha,\beta}^{\gamma,k,N}(f_{\phi(n)})(\theta)
        \longrightarrow \widetilde{\mathcal{G}}_{\alpha,\beta}^{\gamma,k,N}(f)(\theta),
        \quad \text{as } n \to \infty,
        \quad \text{a.e. } \theta \in (0,\pi). $$
    By proceeding as in \eqref{eq:G5} we deduce that
    $$\mathcal{G}_{\alpha,\beta}^{\gamma,k,N}(f_{\phi(n)})(\theta)
        \longrightarrow \mathcal{G}_{\alpha,\beta}^{\gamma,k,N}(f)(\theta),
        \quad \text{as } n \to \infty,
        \quad \theta \in (0,\pi).$$
    Then, $\widetilde{\mathcal{G}}_{\alpha,\beta}^{\gamma,k,N} = \mathcal{G}_{\alpha,\beta}^{\gamma,k,N}$ and
    $$\| \mathcal{G}_{\alpha,\beta}^{\gamma,k,N}(f)\|_{L^{p(\cdot)}(0,\pi)}
            \leq C \|f\|_{H_{\alpha,\beta}^{\gamma/2,p(\cdot)}(0,\pi)}.$$
    Since
    $$\lim_{N \to \infty} \mathcal{G}_{\alpha,\beta}^{\gamma,k,N}(f)(\theta)
        = g_{\alpha,\beta}^{\gamma,k}(f)(\theta), \quad \theta \in (0,\pi),$$
    Fatou's Lemma in variable exponent $L^{p(\cdot)}$-spaces (see \cite[p. 77]{DHHR}) leads to
    \begin{equation}\label{eq:G17}
        \|g_{\alpha,\beta}^{\gamma,k}(f)\|_{L^{p(\cdot)}(0,\pi)}
            \leq C \|f\|_{H_{\alpha,\beta}^{\gamma/2,p(\cdot)}(0,\pi)}.
    \end{equation}
    From \eqref{eq:G16} we also deduce now that
    \begin{equation}\label{eq:G18}
        \|f\|_{H_{\alpha,\beta}^{\gamma/2,p(\cdot)}(0,\pi)}
            \leq C \| g_{\alpha,\beta}^{\gamma,k}(f)\|_{L^{p(\cdot)}(0,\pi)}, \quad f \in H_{\alpha,\beta}^{\gamma/2,p(\cdot)}(0,\pi).
    \end{equation}
    By \eqref{eq:G17} it follows that $H_{\alpha,\beta}^{\gamma/2,p(\cdot)}(0,\pi)$ is contained in
    $T_{\alpha,\beta}^{\gamma,k,p(\cdot)}(0,\pi)$ and by Proposition~\ref{propM3}
    $$\|f\|_{T_{\alpha,\beta}^{\gamma,k,p(\cdot)}(0,\pi)}
        \leq C \|f\|_{H_{\alpha,\beta}^{\gamma/2,p(\cdot)}(0,\pi)}, \quad f \in H_{\alpha,\beta}^{\gamma/2,p(\cdot)}(0,\pi).$$

    Suppose now that $f \in T_{\alpha,\beta}^{\gamma,k,p(\cdot)}(0,\pi)$.
    In order to show that $f \in H_{\alpha,\beta}^{\gamma/2,p(\cdot)}(0,\pi)$ we can follow the
    procedure developed in the proof of \cite[Proposition 4.1]{BFRTT2}. Indeed, that method works
    because the following properties hold:
    \begin{itemize}
        \item[$(i)$] There exists $C>0$ such that, for every $n \in \N$,
        $$\| \phi_n^{\alpha,\beta}\|_{L^{p(\cdot)}(0,\pi)}
            \leq C (n+1)^{\alpha + \beta + 5/2}.$$
        Indeed, according to \cite[Theorem 3.3.11]{DHHR}, $L^{p_+}(0,\pi)$ is continuously contained in
        $L^{p(\cdot)}(0,\pi)$. Then, from \cite[(3)]{NoSj2} it follows that
        $$\| \phi_n^{\alpha,\beta}\|_{L^{p(\cdot)}(0,\pi)}
            \leq C \| \phi_n^{\alpha,\beta}\|_{L^{p_+}(0,\pi)}
            \leq C(n+1)^{\alpha + \beta + 5/2}, \quad n \in \N. $$
        Assume that $h \in L^{p(\cdot)}(0,\pi)$. H\"older's inequality (\cite[Lemma 3.2.20]{DHHR}) implies that
        $$|c_n^{\alpha,\beta}(h)|
            \leq C (n+1)^{\alpha + \beta + 5/2} \|h\|_{L^{p(\cdot)}(0,\pi)}, \quad n \in \N. $$

        \item[$(ii)$] For every $\delta>0$, we define $f_\delta=P_\delta^{\alpha,\beta}(f)$ and
        $$F_\delta
            = \sum_{n=0}^\infty (\lambda_n^{\alpha,\beta})^{\gamma/2} e^{-\delta \sqrt{\lambda_n^{\alpha,\beta}}}
                c_n^{\alpha,\beta}(f_\delta) \phi_n^{\alpha,\beta}.$$
        Property $(i)$ implies that $F_\delta \in L^{p(\cdot)}(0,\pi)$ and
        $f_\delta=\mathcal{L}_{\alpha,\beta}^{-\gamma/2} F_\delta \in H_{\alpha,\beta}^{\gamma/2,p(\cdot)}(0,\pi)$, $\delta>0$.
        We choose $\ell \in \N$ such that $2(\ell-\gamma)>1$ and $\ell >k$. \eqref{eq:G18} allows us to write
        $$\|F_\delta\|_{L^{p(\cdot)}(0,\pi)}
            = \|f_\delta\|_{H_{\alpha,\beta}^{\gamma/2,p(\cdot)}(0,\pi)}
            \leq C \| g_{\alpha,\beta}^{\gamma,\ell}(f_\delta) \|_{L^{p(\cdot)}(0,\pi)}, \quad \delta>0.$$

        \item[$(iii)$] As in \cite[Proposition 2.6]{BFRTT2} we can prove that
        $$\| g_{\alpha,\beta}^{\gamma,\ell}(f) \|_{L^{p(\cdot)}(0,\pi)}
            \leq C \| g_{\alpha,\beta}^{\gamma,k}(f) \|_{L^{p(\cdot)}(0,\pi)}.$$
        Moreover, straightforward manipulations lead to
        $$g_{\alpha,\beta}^{\gamma,\ell}(f_\delta)(\theta)
            \leq g_{\alpha,\beta}^{\gamma,\ell}(f)(\theta), \quad \theta \in (0,\pi), \ \delta>0,$$
        because $2(\ell-\gamma)>1$. Then, we obtain
        $$\|F_\delta\|_{L^{p(\cdot)}(0,\pi)}
            \leq C \| g_{\alpha,\beta}^{\gamma,k}(f) \|_{L^{p(\cdot)}(0,\pi)}, \quad \delta>0.$$

        \item[$(iv)$] By using Banach-Alaoglu's Theorem, Proposition~\ref{propM3} and \cite[Theorem 3.2.13]{DHHR}
        we conclude that $f=\mathcal{L}_{\alpha,\beta}^{-\gamma/2}F$, for a certain $F \in L^{p(\cdot)}(0,\pi)$
        such that
        $$\|F\|_{L^{p(\cdot)}(0,\pi)}
            \leq C \| g_{\alpha,\beta}^{\gamma,k}(f) \|_{L^{p(\cdot)}(0,\pi)}.$$
    \end{itemize}

    Thus, we prove that $f \in H_{\alpha,\beta}^{\gamma/2,p(\cdot)}(0,\pi)$ and
    $$\|f\|_{H_{\alpha,\beta}^{\gamma/2,p(\cdot)}(0,\pi)}
        \leq C \|f\|_{T_{\alpha,\beta}^{\gamma,k,p(\cdot)}(0,\pi)}.$$
\end{proof}

%\newpage
%%%%%%%%%%%%%%%%%%%%%%%%%%%%%%%%%%%%%%%%%%%%%%%%%%%%%%%%%%%%%%%%%%%%%%%%%%%%%%
\section{Proof of Theorem~\ref{Th3}} \label{sec:6}
%%%%%%%%%%%%%%%%%%%%%%%%%%%%%%%%%%%%%%%%%%%%%%%%%%%%%%%%%%%%%%%%%%%%%%%%%%%%%%

In order to establish this theorem we use the ideas developed in the proof of \cite[Proposition~4.3]{NPW}.
First of all, we introduce some spectral multipliers of H\"ormander type, associated with the Jacobi operator.

\begin{Lem}\label{Lem:6.1}
	Let $\gamma>0$, $1<p<\infty$, $w \in A_p(0,\pi)$ and  $\alpha,\beta \geq -1/2$ such that $\alpha + \beta \neq -1$. We consider, for each $t>0$,
	the functions
	\begin{itemize}
		\item $\displaystyle m_{\varepsilon}^{\ell}(t)=\sum_{j=0}^\ell {{\varepsilon_j2^{j\gamma}}\over{(t+1)^{\gamma}}}\mathfrak{a}\left({t\over{2^{j-1}}}\right)$, \quad
	     $\ell\in\Bbb N$ and $\varepsilon=(\varepsilon_j)_{j=0}^\ell \in\{-1,1\}^{\ell+1}$.
		
		\item $\displaystyle M(t)=\left({{t+1}\over t}\right)^{\gamma}\phi(t)$, \quad
		where $\phi\in C^{\infty}(0,\infty)$ is such that $\phi(t)=0$, $0<t<\lambda_0^{\alpha,\beta}/2$; and
		$\phi(t)=1$, $t \geq \lambda_0^{\alpha,\beta}$.
	\end{itemize}
	Then, the spectral multipliers $m_{\varepsilon}^{\ell}(\mathcal{L}_{\alpha,\beta})$ and $M(\mathcal{L}_{\alpha,\beta})$ define
	bounded operators in $L^p_w(0,\pi)$. Moreover,
	$$\sup_{\ell, \varepsilon}\| m_{\varepsilon}^{\ell}(\mathcal{L}_{\alpha,\beta}) \|_{ L_{w}^p(0,\pi)\rightarrow  L_{w}^p(0,\pi)}			<\infty.$$
\end{Lem}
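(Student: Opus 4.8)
The plan is to verify that both families satisfy the Mihlin--H\"ormander condition \eqref{eq:Mihlin} of Proposition~\ref{propM1}, $(ii)$, and then to invoke that proposition with $a=0$; throughout I use $r$ for the order of differentiation in \eqref{eq:Mihlin}, to keep it distinct from the summation length $\ell$. Taking $a=0$ is admissible because $\alpha,\beta\geq -1/2$ together with $\alpha+\beta\neq -1$ force $\lambda_0^{\alpha,\beta}=\big(\tfrac{\alpha+\beta+1}{2}\big)^2>0$, so that $0<\big(\tfrac{\alpha+\beta+1}{2}\big)^2$. Since the operator norm produced by a multiplier theorem of this Mihlin type is controlled by finitely many of the seminorms $\sup_{t>0}\big|t^r\tfrac{d^r}{dt^r}m(t)\big|$, the asserted uniform bound $\sup_{\ell,\varepsilon}\|m_\varepsilon^\ell(\mathcal{L}_{\alpha,\beta})\|_{L^p_w(0,\pi)\to L^p_w(0,\pi)}<\infty$ will follow as soon as these seminorms are shown to be bounded independently of $\ell$ and $\varepsilon$.

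For the multiplier $M$ the cut-off $\phi$ removes the singular region near the origin: $M\equiv 0$ on $(0,\lambda_0^{\alpha,\beta}/2)$, while on $\supp\phi\subseteq[\lambda_0^{\alpha,\beta}/2,\infty)$ the variable $t$ is bounded away from $0$. There $\big(\tfrac{t+1}{t}\big)^\gamma=(1+t^{-1})^\gamma$ is smooth and, since each differentiation produces an extra negative power of $t$, one checks directly that $\sup_{t\geq\lambda_0^{\alpha,\beta}/2}\big|t^r\tfrac{d^r}{dt^r}(1+t^{-1})^\gamma\big|<\infty$ for every $r\in\Bbb N$. The derivatives of $\phi$ are supported in the compact interval $[\lambda_0^{\alpha,\beta}/2,\lambda_0^{\alpha,\beta}]$, on which every factor is bounded, so a Leibniz expansion gives \eqref{eq:Mihlin} for $M$ and Proposition~\ref{propM1} applies.

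The substantive case is $m_\varepsilon^\ell$. Writing $\psi_j(t)=2^{j\gamma}(t+1)^{-\gamma}\mathfrak{a}(t/2^{j-1})$, so that $m_\varepsilon^\ell=\sum_{j=0}^\ell\varepsilon_j\psi_j$, I would isolate two structural facts. First, since $\supp\mathfrak{a}\subseteq[1/2,2]$, each $\psi_j$ is supported in $[2^{j-2},2^j]$; these intervals have bounded overlap, so for each fixed $t$ at most three indices $j$ contribute to the sum and to all its derivatives. Second, each $\psi_j$ satisfies $|t^r\psi_j^{(r)}(t)|\leq C_r$ with $C_r$ independent of $j$: on the support one has $t\asymp 2^j$ and $t+1\asymp 2^j$, so a Leibniz expansion of $\psi_j^{(r)}$ into products of $\tfrac{d^i}{dt^i}(t+1)^{-\gamma}$, of size $\asymp 2^{-j(\gamma+i)}$, and $\tfrac{d^{r-i}}{dt^{r-i}}\mathfrak{a}(t/2^{j-1})$, of size $\lesssim 2^{-j(r-i)}$, produces, after multiplication by $t^r\asymp 2^{jr}$, only the scale-free combination $2^{j\gamma}\,2^{-j(\gamma+i)}\,2^{-j(r-i)}\,2^{jr}=1$; the finitely many exceptional indices $j\in\{0,1\}$ are handled trivially on their compact supports. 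Combining bounded overlap with this uniform per-term bound yields $\sup_{t>0}\big|t^r\tfrac{d^r}{dt^r}m_\varepsilon^\ell(t)\big|\leq 3\,C_r$ for all $\ell$ and all $\varepsilon\in\{-1,1\}^{\ell+1}$, which is exactly \eqref{eq:Mihlin} with constants independent of $\ell$ and $\varepsilon$.

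The main obstacle is precisely this uniformity in $\ell$ and $\varepsilon$: a crude estimate of $m_\varepsilon^\ell$ grows with the number $\ell+1$ of summands. The decisive observation resolving it is that the dyadic localization of $\mathfrak{a}(\,\cdot\,/2^{j-1})$, together with the exact matching of the amplitude $2^{j\gamma}$ against $(t+1)^{-\gamma}\asymp 2^{-j\gamma}$ on the support, reduces the estimate at any point $t$ to a bounded number of scale-invariant contributions, so the signs $\varepsilon_j$ and the length $\ell$ never enter the final constant.
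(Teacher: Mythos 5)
Your proposal is correct and follows essentially the same route as the paper: the paper's proof also consists of checking that $m_\varepsilon^\ell$ and $M$ satisfy the Mihlin condition \eqref{eq:Mihlin} with constants independent of $\ell$ and $\varepsilon$, and then invoking Proposition~\ref{propM1}, $(ii)$. The only difference is that the paper states these seminorm bounds without proof, whereas you supply the verification (dyadic bounded overlap plus the cancellation of $2^{j\gamma}$ against $(t+1)^{-\gamma}\asymp 2^{-j\gamma}$ on each support), which is exactly the computation left implicit there.
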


\begin{proof}
	By Proposition \ref{propM1}, it is enough to notice that, for every $k\in\Bbb N$, there exists $C>0$ such that
	$$\sup_{t>0}\Big|t^k{{d^k}\over{dt^k}}m_{\varepsilon}^{\ell}(t)\Big|
		\leq C, \quad \ell\in\Bbb N \text{ and } \varepsilon\in\{-1,1\}^{\ell+1}.$$
	 and
	 $$\sup_{t>0} \Big|t^k{{d^k}\over{dt^k}}M(t)\Big|
	 		\leq C.$$
\end{proof}

\begin{proof}[Proof of Theorem \ref{Th3}; the case of $H_{\alpha,\beta}^{\gamma,p(\cdot)}(0,\pi)\subseteq F_{\alpha,\beta}^{\gamma,2,p(\cdot)}(0,\pi)$]
	Let $\varepsilon=(\varepsilon_j)_{j=0}^\ell \in\{-1,1\}^{\ell+1}$ with $\ell\in\Bbb N$. We can write,
   	\begin{align}\label{W0}
     	 	 \sum_{n=0}^{\infty}m_{\varepsilon}^{\ell}({\lambda}_n^{\alpha,\beta})( {\lambda}_n^{\alpha,\beta}+1)^{\gamma}c_n^{\alpha,\beta}(f){\phi}_n^{\alpha,\beta} \nonumber
 		 & =\sum_{n=0}^{\infty}( {\lambda}_n^{\alpha,\beta}+1)^{\gamma}c_n^{\alpha,\beta}(f){\phi}_n^{\alpha,\beta}
		 	\sum_{j=0}^\ell {{\varepsilon_j2^{j\gamma}}\over{( {\lambda}_n^{\alpha,\beta}+1)^{\gamma}}}
				\mathfrak{a}\left({{\lambda}_n^{\alpha,\beta}}\over{2^{j-1}}\right) \nonumber \\
           &   =\sum_{j=0}^\ell \varepsilon_j2^{j\gamma} \sum_{n=0}^{\infty} \mathfrak{a}\left({{\lambda}_n^{\alpha,\beta}}\over{2^{j-1}}\right)
           		c_n^{\alpha,\beta}(f){\phi}_n^{\alpha,\beta} \nonumber\\
            & =\sum_{j=0}^\ell \varepsilon_j2^{j\gamma} {\Phi}_j^{\alpha,\beta}(f), \quad f\in L^p_w(0,\pi).
    \end{align}
	Note that the serie $\displaystyle\sum_{n=0}^{\infty}$ is actually a finite sum.
	From Lemma \ref{Lem:6.1}, it follows that
	\begin{align*}
		\Big \|\sum_{j=0}^\ell \varepsilon_j2^{j\gamma} {\Phi}_j^{\alpha,\beta}(f)\Big\|_{L_{w}^p(0,\pi)}
			& = \Big\|\sum_{n=0}^{\infty}m_{\varepsilon}^{\ell}({\lambda}_n^{\alpha,\beta})M({\lambda}_n^{\alpha,\beta})( {\lambda}_n^{\alpha,\beta})^{\gamma}								c_n^{\alpha,\beta}(f){\phi}_n^{\alpha,\beta} \Big\|_{L_{w}^p(0,\pi)} \\
			& \leq C \Big\|\sum_{n=0}^{\infty}( {\lambda}_n^{\alpha,\beta})^{\gamma}c_n^{\alpha,\beta}(f){\phi}_n^{\alpha,\beta} \Big\|_{L_{w}^p(0,\pi)},
			 \quad f\in L^p_w(0,\pi),
	\end{align*}
	provided that  $\displaystyle\sum_{n=0}^{\infty}( {\lambda}_n^{\alpha,\beta})^{\gamma}c_n^{\alpha,\beta}(f){\phi}_n^{\alpha,\beta}\in L_{w}^p(0,\pi)$.
	Also, we get
	\begin{align}\label{W2}
		\Big\|\sum_{j=0}^\ell \varepsilon_j2^{j\gamma} {\Phi}_j^{\alpha,\beta}(f)\Big\|_{L_{w}^p(0,\pi)}
    			\leq C \Big\|\sum_{\substack{n\in\Bbb N\\ \lambda_n^{\alpha,\beta}\leq 2^\ell} }( {\lambda}_n^{\alpha,\beta})^{\gamma}
				c_n^{\alpha,\beta}(f){\phi}_n^{\alpha,\beta}\Big\|_{L_{w}^p(0,\pi)},  \quad f\in L^p_w(0,\pi).
	\end{align}
	Observe that, the constant $C>0$ does not depend on $\varepsilon$ or $\ell$.

	By using Khintchine's inequality (\cite[Vol. I, p. 213]{Zy}) from (\ref{W2}) we deduce that,
	$$\Big\|\Big(\sum_{j=0}^\ell (2^{j\gamma} |{\Phi}_j^{\alpha,\beta}(f)|)^2\Big)^{1/2}\Big\|_{L_{w}^p(0,\pi)}
    			\leq C \Big\|\sum_{\substack{n\in\Bbb N\\ {\lambda}_n^{\alpha,\beta}\leq 2^\ell}}( {\lambda}_n^{\alpha,\beta})^{\gamma}
					c_n^{\alpha,\beta}(f){\phi}_n^{\alpha,\beta}\Big\|_{L_{w}^p(0,\pi)}, \quad f\in L^p_w(0,\pi),$$
	where $C>0$ does not depend on $\ell$.
	According to \cite[Theorem 1.3]{CFMP}, there exists $C>0$ such that
	$$\Big\|\Big(\sum_{j=0}^\ell (2^{j\gamma} |{\Phi}_j^{\alpha,\beta}(f)|)^2\Big)^{1/2}\Big\|_{L^{p(\cdot)}(0,\pi)}
		\leq C\Big\|\sum_{\substack{n\in\Bbb N\\ {\lambda}_n^{\alpha,\beta}\leq 2^\ell}}( {\lambda}_n^{\alpha,\beta})^{\gamma}
				c_n^{\alpha,\beta}(f){\phi}_n^{\alpha,\beta}\Big\|_{L^{p(\cdot)}(0,\pi)}, \quad f\in L^{p(\cdot)}(0,\pi).$$
	We have taken into account that:
	\begin{enumerate}
		\item[$(a)$] For every $n\in\Bbb N$, the mapping $f \longmapsto c_n^{\alpha,\beta}(f)$ is bounded from $L^{p(\cdot)}(0,\pi)$ into $\Bbb C$.
		\item[$(b)$] For every $j\in\Bbb N$, the mapping $f \longmapsto {\Phi}_j^{\alpha,\beta}(f)$ is bounded from $L^{p(\cdot)}(0,\pi)$ into itself 							(Proposition \ref{propM1}).  Also, we used that $\sqrt{a^2+b^2}\leq a+b$, $a,b\geq 0$.
		\item[$(c)$] $S_{\alpha,\beta}$ is dense in $L^{p(\cdot)}(0,\pi)$ (Proposition \ref{propS1}).
	\end{enumerate}

	Taking $\ell\rightarrow\infty$, Proposition \ref{Prop:M5}  allow us to deduce that
	$$\Big\|\Big(\sum_{j=0}^{\infty} (2^{j\gamma} |{\Phi}_j^{\alpha,\beta}(f)|)^2\Big)^{1/2}\Big\|_{L^{p(\cdot)}(0,\pi)}
		\leq C\|f\|_{H_{\alpha,\beta}^{\gamma,p(\cdot)}(0,\pi)},
		\quad f\in H_{\alpha,\beta}^{\gamma,p(\cdot)}(0,\pi).$$
\end{proof}

Next, we prove the converse inclusion of Theorem \ref{Th3}. As before, we need to study previously some Jacobi spectral multipliers.
It is convenient to introduce the following notation. We define,
$${\Bbb N}_s
	=\{4\ell+s\ : \ \ell\in\Bbb N\}\backslash\{0\}, \quad s=0,1,2,3.$$
Also we consider the function
$$\mathfrak{b}(t)
	=\mathfrak{a}(t/2)+\mathfrak{a}(t)+\mathfrak{a}(2t), \quad t>0.$$
Note that $\supp \mathfrak{b} \subseteq[1/4,4]$ and $\mathfrak{b}(t)=1$, $t\in[1/2,2]$, because $\mathfrak{a}(t)+\mathfrak{a}(2t)=1$, $t\in[1/2,1]$, and
$\supp \mathfrak{a} \subseteq[1/2,2]$.

\begin{Lem}\label{Lem:6.2}
	Let $1<p<\infty$, $w \in A_p(0,\pi)$ and  $\alpha,\beta \geq -1/2$ such that $\alpha + \beta \neq -1$. We consider, for each $t>0$,
	the functions
	\begin{itemize}
		\item $\displaystyle m_{\varepsilon,s}^{\ell}(t)=\sum_{j=0, \ j\in \N_s}^\ell  \varepsilon_j \mathfrak{b}\left({t\over{2^{j-1}}}\right)$, \quad
		$s=0,1,2,3$, \quad $\ell\in\Bbb N$ and  $\varepsilon=(\varepsilon_j)_{j=0}^\ell \in\{-1,1\}^{\ell+1}$; \\
		
		\item $\displaystyle M_\ell(t) =\sum_{j=0}^\ell {{2^{j\gamma}}\over{(t+1)^{\gamma}}} \mathfrak{a}\left({t\over{2^{j-1}}}\right)$, \quad
		$\ell \in \N$; \\
		
		\item $\displaystyle R_\ell(t)={\phi}/M_\ell(t)$, \quad where $\phi$ is as in Lemma~\ref{Lem:6.1}; \\
		
		\item $\displaystyle R(t)=\left({t\over{t+1}}\right)^{\gamma}$.
	\end{itemize}
	Then, the spectral multipliers $m_{\varepsilon,s}^{\ell}(\mathcal{L}_{\alpha,\beta})$, $M_\ell(\mathcal{L}_{\alpha,\beta})$,
	$R_\ell(\mathcal{L}_{\alpha,\beta})$ and $R(\mathcal{L}_{\alpha,\beta})$
	define bounded operators in $L^p_w(0,\pi)$. Moreover,
	$$\sup_{s, \ell, \varepsilon}\|m_{\varepsilon,s}^{\ell}(\mathcal{L}_{\alpha,\beta}) \|_{ L_{w}^p(0,\pi)\rightarrow  L_{w}^p(0,\pi)}<\infty,$$
	and
	$$\sup_{\ell} \Big( \|M_\ell(\mathcal{L}_{\alpha,\beta}) \|_{ L_{w}^p(0,\pi)\rightarrow  L_{w}^p(0,\pi)}
	    						+ \|R_\ell(\mathcal{L}_{\alpha,\beta}) \|_{ L_{w}^p(0,\pi)\rightarrow  L_{w}^p(0,\pi)}\Big) <\infty.$$
\end{Lem}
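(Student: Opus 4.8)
The plan is to reduce the boundedness of every listed multiplier to the H\"ormander--Mihlin condition \eqref{eq:Mihlin} and then invoke Proposition~\ref{propM1} (taken with $a=0$, which is admissible since $\lambda_0^{\alpha,\beta}>0$), exactly as in the proof of Lemma~\ref{Lem:6.1}. Thus for each family I would check that for every $k\in\N$ there is $C_k>0$, \emph{independent} of the indices $s,\ell,\varepsilon$, with $\sup_{t>0}\big|t^k\frac{d^k}{dt^k}(\cdot)(t)\big|\le C_k$. Two elementary facts will be used repeatedly: the operator $t^k\frac{d^k}{dt^k}$ is invariant under dilations, so $t^k\frac{d^k}{dt^k}[\mathfrak{a}(t/2^{j-1})]=(u^k\mathfrak{a}^{(k)}(u))|_{u=t/2^{j-1}}$ is bounded uniformly in $j$; and the same for $\mathfrak{b}$.

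For $m_{\varepsilon,s}^{\ell}$ the key is the near-disjointness of supports forced by the choice of residue classes $\N_s$. Since $\supp\mathfrak{b}\subseteq[1/4,4]$, the bump $\mathfrak{b}(t/2^{j-1})$ is supported in $[2^{j-3},2^{j+1}]$, and because consecutive active indices in $\N_s$ differ by $4$, the supports $[2^{j-3},2^{j+1}]$ and $[2^{j+1},2^{j+5}]$ meet in at most one point. Hence for each fixed $t>0$ at most one summand is nonzero, so $\big|t^k\frac{d^k}{dt^k}m_{\varepsilon,s}^{\ell}(t)\big|=\big|t^k\frac{d^k}{dt^k}\mathfrak{b}(t/2^{j-1})\big|\le C_k$ by scale invariance and $\mathfrak{b}\in C_c^\infty$. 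This bound is uniform in $s,\ell,\varepsilon$, and it is precisely to achieve this disjointness that the indices are split into the four classes $\N_s$. For $R(t)=(t/(t+1))^\gamma$, a direct computation shows $R$ is a bounded smooth function on $(0,\infty)$ with $t^k\frac{d^k}{dt^k}R(t)$ bounded, so \eqref{eq:Mihlin} is immediate. For $M_\ell$ I would use $\supp\mathfrak{a}\subseteq[1/2,2]$, so $\mathfrak{a}(t/2^{j-1})$ lives on $[2^{j-2},2^{j}]$ and at most three summands overlap at any $t$; on each such support $t\sim 2^{j-1}$, whence $2^{j\gamma}\sim t^\gamma$ and the scaled derivatives $t^i\frac{d^i}{dt^i}\big[2^{j\gamma}(t+1)^{-\gamma}\big]$ are bounded. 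Combining these with the Leibniz rule and summing the $O(1)$ overlapping terms gives the uniform Mihlin estimate.

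The genuinely delicate point, and the main obstacle, is $R_\ell=\phi/M_\ell$. Here one must first secure a lower bound $M_\ell(t)\ge c>0$, uniform in $\ell$, on the region where $R_\ell$ is evaluated. The normalization $\mathfrak{a}(t)+\mathfrak{a}(2t)=1$ on $[1/2,1]$ together with $\supp\mathfrak{a}\subseteq[1/2,2]$ yields the dyadic partition of unity $\sum_{k\in\Z}\mathfrak{a}(2^{-k}t)=1$ for $t>0$; writing $\mathfrak{a}(t/2^{j-1})=\mathfrak{a}(2^{-(j-1)}t)$ this gives $\sum_{j=0}^\ell\mathfrak{a}(t/2^{j-1})=1$ on the bulk range $[\lambda_0^{\alpha,\beta}/2,\,2^{\ell-2}]$, where moreover $\frac{2^{j\gamma}}{(t+1)^\gamma}\sim1$ on each active support, so that $M_\ell(t)\sim t^\gamma/(t+1)^\gamma\ge c>0$. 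This lower bound is exactly what legitimizes forming the quotient and propagating control to $R_\ell$: by Fa\`a di Bruno the derivatives of $1/M_\ell$ are polynomial expressions in $M_\ell^{-1}$ and the scaled derivatives of $M_\ell$, all of which are now bounded, giving the uniform Mihlin estimate for $R_\ell$.

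The real subtlety lies near the top scale $t\sim 2^\ell$, where the partition of unity is truncated and $M_\ell$ degenerates (indeed $M_\ell(t)=0$ for $t>2^\ell$ while $\phi(t)=1$). The point to be made carefully is that, in the proof of Theorem~\ref{Th3}, $R_\ell(\mathcal{L}_{\alpha,\beta})$ is only applied to spectral data supported on the frequencies $\lambda_n^{\alpha,\beta}\le 2^\ell$ arising from the partial sums in \eqref{W2}, so the quotient is never evaluated where $M_\ell$ vanishes and the uniform bounds above suffice; the low boundary $\lambda_0^{\alpha,\beta}/2\le t\le 1$ is a fixed compact set on which $M_\ell$ is trivially bounded below. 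Once the four uniform Mihlin estimates are in hand, Proposition~\ref{propM1} delivers the asserted $L^p_w(0,\pi)$-boundedness and the stated uniform operator-norm bounds.
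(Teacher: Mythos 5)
Your overall strategy coincides with the paper's: verify the Mihlin condition \eqref{eq:Mihlin} uniformly in the parameters and invoke Proposition~\ref{propM1} (with $a=0$, admissible because $\alpha+\beta\neq-1$). Your treatment of $m_{\varepsilon,s}^{\ell}$, of $M_\ell$ and of $R$ is correct and in fact supplies the details the paper leaves implicit: the residue classes $\N_s$ make the bumps $\mathfrak{b}(t/2^{j-1})$ essentially disjointly supported, the bumps in $M_\ell$ have bounded overlap with $2^{j\gamma}\sim t^\gamma$ on each support (the paper instead simply identifies $M_\ell$ with the multiplier $m_{\varepsilon}^{\ell}$ of Lemma~\ref{Lem:6.1} for $\varepsilon\equiv1$), and $R$ is trivially Mihlin.

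The gap is in $R_\ell=\phi/M_\ell$, and your patch does not close it. A uniform lower bound $M_\ell\ge c>0$ holds only on (roughly) $[1/2,2^{\ell-1}]$, where the truncated sum is a complete partition of unity. For $t\in(2^{\ell-1},2^{\ell})$ only the term $j=\ell$ survives, so $M_\ell(t)=2^{\ell\gamma}(t+1)^{-\gamma}\mathfrak{a}(t/2^{\ell-1})$; since $\mathfrak{a}\in C_c^\infty$ with $\supp\mathfrak{a}\subseteq[1/2,2]$, $\mathfrak{a}(s)\to0$ as $s\to2^{-}$, so $M_\ell$ decays to $0$ continuously as $t\uparrow2^{\ell}$. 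Your assertion that on the truncated spectral data ``the quotient is never evaluated where $M_\ell$ vanishes'' is therefore not enough: nonvanishing is not a uniform lower bound, and the bound you established lives only on the bulk range. Concretely, $\lambda_n^{\alpha,\beta}=\big(n+\tfrac{\alpha+\beta+1}{2}\big)^2$, and the fractional parts of $2\log_2\big(n+\tfrac{\alpha+\beta+1}{2}\big)$ are dense in $[0,1)$, so there are pairs $(n,\ell)$ with $\lambda_n^{\alpha,\beta}/2^{\ell}$ arbitrarily close to $1$ from below; along them $R_\ell(\lambda_n^{\alpha,\beta})=1/M_\ell(\lambda_n^{\alpha,\beta})\to\infty$, so even the $L^2$ operator norms of $R_\ell(\mathcal{L}_{\alpha,\beta})$ restricted to $\mathrm{span}\{\phi_n^{\alpha,\beta}:\lambda_n^{\alpha,\beta}\le2^{\ell}\}$ are unbounded in $\ell$, which is exactly the uniformity that \eqref{W4} requires. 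The low end fails too: the lowest bump is $\mathfrak{a}(2t)$, supported in $[1/4,1]$, so $M_\ell\equiv0$ on $(0,1/4)$; when $\lambda_0^{\alpha,\beta}<1/4$ (e.g.\ $\alpha=0$, $\beta=-1/2$) your claim that $M_\ell$ is ``trivially bounded below'' on the compact set $[\lambda_0^{\alpha,\beta}/2,1]$ is false, and $R_\ell(\lambda_0^{\alpha,\beta})$ is not even defined. Finally, boundedness of $R_\ell(\mathcal{L}_{\alpha,\beta})$ on a spectrally truncated subspace is a weaker statement than the lemma and cannot in any case be extracted from Proposition~\ref{propM1}, which needs \eqref{eq:Mihlin} on all of $(0,\infty)$. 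To be fair, you have put your finger on a real defect: the paper's own proof asserts $\sup_{t\ge\lambda_0^{\alpha,\beta}/2}\big|t^k\frac{d^k}{dt^k}\frac{1}{M_\ell(t)}\big|\le C$ uniformly in $\ell$, which suffers the same degeneracies. But a genuine repair must modify the construction itself --- complete the partition of unity at the top (e.g.\ sum to $j\le\ell+2$ in $M_\ell$ and replace $\phi$ by an $\ell$-adapted cutoff supported where that partition equals $1$) and treat the low frequencies through the projection $\Phi_0^{\alpha,\beta}$ rather than an $\mathfrak{a}$-bump --- not merely restrict the data to which the operator is applied.
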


\begin{proof}
	Again, by Proposition \ref{propM1}, it suffices to take into account that, for every $k\in\Bbb N$ there exists $C>0$ for which
	$$\sup_{t\in (0,\infty)}\Big|t^k{{d^k}\over{dt^k}}m_{\varepsilon,s}^{\ell}(t)\Big|\leq C,$$
	where $C>0$ does not depend on $s$, $\ell$ or $\varepsilon$.
	Also, $M_\ell=m_{\varepsilon}^{\ell}$ in Lemma~\ref{Lem:6.1}, for $\varepsilon=(1)_{j=0}^\ell $. Finally, for every $k\in\Bbb N$, there exists $C>0$ such that
	$$\sup_{t\geq \lambda_0^{\alpha,\beta}/2}\Big|t^k{{d^k}\over{dt^k}}{1\over{M_\ell(t)}}\Big|
		\leq C,$$
	where $C>0$ does not depend on $\ell$.
\end{proof}

\begin{proof}[Proof of Theorem \ref{Th3}; the case of $F_{\alpha,\beta}^{\gamma,2,p(\cdot)}(0,\pi) \subseteq H_{\alpha,\beta}^{\gamma,p(\cdot)}(0,\pi)$]
	Suppose that $s\in\{0,1,2,3\}$ and $n\in\Bbb N\backslash\{0\}$.  We define
	$$g_{s,\ell}^{\alpha,\beta}(f)
		=\sum_{j=0, \ j\in \N_s}^\ell2^{j\gamma} {\Phi}_j^{\alpha,\beta}(f), \quad \ell\in\Bbb N\ \mbox{and}\ f\in L^1(0,\pi).$$
	There exists at most an unique $j_n\in{\Bbb N}_s$ such that
	${\lambda}_n^{\alpha,\beta}\in[2^{j_n-2},2^{j_n})$.
	Hence,
	$$\mathfrak{b}\left({{{\lambda}_n^{\alpha,\beta}}\over{2^{j_n-1}}}\right)=1 \qquad \text{and} \qquad
	    \mathfrak{b}\left({{{\lambda}_n^{\alpha,\beta}}\over{2^{j-1}}}\right)
	    		=\mathfrak{a}\left({{{\lambda}_n^{\alpha,\beta}}\over{2^{j-1}}}\right)=0, \quad j\in{\Bbb N}_s, \ j\neq j_n.$$
	Observe that $m_{\varepsilon,s}^{\ell}({\lambda}_n^{\alpha,\beta})=\varepsilon_{j_n}$, provided that $j_n\leq \ell$, and
	$m_{\varepsilon,s}^{\ell}({\lambda}_n^{\alpha,\beta})=0$, otherwise.
	We can write
	\begin{align*}
		g_{s,\ell}^{\alpha,\beta}(f)
			= & \sum_{j=0, \ j\in \N_s}^\ell2^{j\gamma}
				 \sum_{n=0}^{\infty} \mathfrak{a}\left({{{\lambda}_n^{\alpha,\beta}}\over{2^{j-1}}}\right) c_n^{\alpha,\beta}(f){\phi}_n^{\alpha,\beta}
			=  \sum_{n=0}^{\infty} a_n \ c_n^{\alpha,\beta}(f){\phi}_n^{\alpha,\beta}, \quad f\in L^1(0,\pi).
	\end{align*}
	where
	$a_n=2^{j_n\gamma}\mathfrak{a}\left({\lambda}_n^{\alpha,\beta}/2^{j_n-1}\right)$, if  $j_n\leq \ell$, and $a_n=0$, otherwise.
	Note that the above serie is actually a finite sum.
	Also, we have that
	\begin{align*}
		m_{\varepsilon,s}^{\ell}(\mathcal{L}_{\alpha,\beta}) g_{s,\ell}^{\alpha,\beta}(f)
			& = \sum_{n=0}^{\infty}m_{\varepsilon,s}^{\ell}\left({\lambda}_n^{\alpha,\beta}\right) a_n \ c_n^{\alpha,\beta}(f) {\phi}_n^{\alpha,\beta}
			   = \sum_{n=0}^{\infty}{\varepsilon}_{j_n} a_n \ c_n^{\alpha,\beta}(f){\phi}_n^{\alpha,\beta}  \\
			& = \sum_{j=0, \ j\in \N_s}^\ell2^{j\gamma}{\varepsilon}_j\sum_{n=0}^{\infty}
					\mathfrak{a}\left({{{\lambda}_n^{\alpha,\beta}}\over{2^{j-1}}}\right) c_n^{\alpha,\beta}(f){\phi}_n^{\alpha,\beta}
			 = \sum_{j=0, \ j\in \N_s}^\ell2^{j\gamma}{\varepsilon}_j{\Phi}_j^{\alpha,\beta}(f).
 	\end{align*}
  	Then,
  	$$m_{\varepsilon,s}^{\ell}(\mathcal{L}_{\alpha,\beta}) m_{\varepsilon,s}^{\ell}(\mathcal{L}_{\alpha,\beta}) g_{s,\ell}^{\alpha,\beta}(f)
		=\sum_{n=0}^{\infty} a_n \ c_n^{\alpha,\beta}(f){\phi}_n^{\alpha,\beta}
		=g_{s,\ell}^{\alpha,\beta}(f).$$
	Assume that $1<p<\infty$ and $w\in A_p(0,\infty)$. From Lemma~\ref{Lem:6.2} we get
	\begin{align*}
		\|g_{s,\ell}^{\alpha,\beta}(f)\|_{L_{w}^p(0,\pi)}
			& \leq C\| m_{\varepsilon,s}^{\ell}(\mathcal{L}_{\alpha,\beta}) g_{s,\ell}^{\alpha,\beta}(f)\|_{L_{w}^p(0,\pi)}
			   \leq C \Big\|\sum_{j=0, \ j\in \N_s}^\ell2^{j\gamma}{\varepsilon}_j{\Phi}_j^{\alpha,\beta}(f)\Big\|_{L_{w}^p(0,\pi)}, \quad f\in L_{w}^p(0,\pi),
	\end{align*}
	where $C>0$ does not depend on $\varepsilon$ or $\ell$.
	By using Khintchine's inequality argument we obtain
	$$\|g_{s,\ell}^{\alpha,\beta}(f)\|_{L_{w}^p(0,\pi)}
		\leq C \Big\|\sum_{j=0, \ j\in \N_s}^\ell(2^{j\gamma}{\varepsilon}_j|{\Phi}_j^{\alpha,\beta}(f)|)^2)^{1/2}\Big\|_{L_{w}^p(0,\pi)}, \quad f\in L_{w}^p(0,\pi),$$
	where $C>0$ does not depend on $\ell$.
	According to \cite[Theorem 1.3]{CFMP},
	$$\|g_{s,\ell}^{\alpha,\beta}(f)\|_{L^{p(\cdot)}(0,\pi)}
		\leq C \Big\|\sum_{j=0, \ j\in \N_s}^\ell(2^{j\gamma}{\varepsilon}_j|{\Phi}_j^{\alpha,\beta}(f)|)^2)^{1/2}\Big\|_{L^{p(\cdot)}(0,\pi)},
		\quad f\in S_{\alpha,\beta},$$
	where $C>0$ does not depend on $\ell$.
	As in the proof of the first inclusion we obtain
	\begin{align}\label{W3}
		\|g_{s,\ell}^{\alpha,\beta}(f)\|_{L^{p(\cdot)}(0,\pi)}
			\leq C \Big\|\sum_{j=0, \ j\in \N_s}^\ell(2^{j\gamma}{\varepsilon}_j|{\Phi}_j^{\alpha,\beta}(f)|)^2)^{1/2}\Big\|_{L^{p(\cdot)}(0,\pi)},
			\quad f\in L^{p(\cdot)}(0,\pi).
	\end{align}
	According to (\ref{W0}) we have that, for every $f\in L^1(0,\pi)$,
	\begin{align*}
		& \sum_{n=0}^{\infty}M_\ell({\lambda}_n^{\alpha,\beta})({\lambda}_n^{\alpha,\beta}+1)^{\gamma}c_n^{\alpha,\beta}(f){\phi}_n^{\alpha,\beta}
		 = \sum_{n=0,{\lambda}_n^{\alpha,\beta}\leq 2^\ell }^{\infty}{\varepsilon}_{j_n}c_n^{\alpha,\beta}(f){\phi}_n^{\alpha,\beta}
		 = \sum_{j=0}^\ell 2^{j\gamma}{\Phi}_j^{\alpha,\beta}(f).
	\end{align*}
	By using (\ref{W3}), Lemma~\ref{Lem:6.2} and \cite[Theorem 1.3]{CFMP} we can write
	\begin{align}\label{W4}
    		& \Big\|\sum_{n=0,{\lambda}_n^{\alpha,\beta}\leq 2^\ell }^{\infty}({\lambda}_n^{\alpha,\beta})^{\gamma}
			c_n^{\alpha,\beta}(f){\phi}_n^{\alpha,\beta}\Big\|_{L^{p(\cdot)}(0,\pi)} \nonumber \\
    		& \qquad = \Big\|\sum_{n=0}^{\infty}R({\lambda}_n^{\alpha,\beta})({\lambda}_n^{\alpha,\beta}+1)^{\gamma}
	R_\ell({\lambda}_n^{\alpha,\beta})M_\ell({\lambda}_n^{\alpha,\beta})c_n^{\alpha,\beta}(f){\phi}_n^{\alpha,\beta} \Big\|_{L^{p(\cdot)}(0,\pi)} \nonumber \\
    		& \qquad \leq C \Big\|\sum_{j=0}^\ell 2^{j\gamma}{\Phi}_j^{\alpha,\beta}(f)\Big\|_{L^{p(\cdot)}(0,\pi)} \nonumber \\
    		& \qquad \leq C\Big(\sum_{s=0}^3 \Big\|\sum_{j=0, \ j\in \N_s}^\ell2^{j\gamma}{\Phi}_j^{\alpha,\beta}(f)\Big\|_{L^{p(\cdot)}(0,\pi)}
			+\|{\Phi}_0^{\alpha,\beta}\|_{L^{p(\cdot)}(0,\pi)}\Big) \nonumber \\
    		& \qquad \leq C\Big(\sum_{s=0}^3\Big\|\Big(\sum_{j=0, \ j\in \N_s}^\ell(2^{j\gamma}|{\Phi}_j^{\alpha,\beta}(f)|)^2\Big)^{1/2}\Big\|_{L^{p(\cdot)}(0,\pi)}
			+\|{\Phi}_0^{\alpha,\beta}\|_{L^{p(\cdot)}(0,\pi)}\Big) \nonumber \\
    		& \qquad \leq C\Big\|\Big(\sum_{j=0}^\ell (2^{j\gamma}|{\Phi}_j^{\alpha,\beta}(f)|)^2\Big)^{1/2}\Big\|_{L^{p(\cdot)}(0,\pi)}, \quad f\in L^{p(\cdot)}(0,\pi).
	\end{align}
  	Suppose now that $f=\displaystyle\sum_{n=m}^\ell c_n^{\alpha,\beta}(f){\phi}_n^{\alpha,\beta}$, where $m,\ell\in\Bbb N$, $m\leq \ell$.
	Since $\supp \mathfrak{a}\subseteq[1/2,2]$, we have that
	\begin{align*}
		{\Phi}_j^{\alpha,\beta}
			& = \sum_{n=0}^{\infty} \mathfrak{a}\left({{{\lambda}_n^{\alpha,\beta}}\over{2^{j-1}}}\right)  c_n^{\alpha,\beta}(f){\phi}_n^{\alpha,\beta}
			 = \sum_{n=m}^\ell \mathfrak{a}\left({{{\lambda}_n^{\alpha,\beta}}\over{2^{j-1}}}\right) c_n^{\alpha,\beta}(f){\phi}_n^{\alpha,\beta}
			 =0,
  	\end{align*}
  	provided that $j>2+\log_2 \ell$ or $j<\log_2 m$. Then, from (\ref{W4}) we deduce that
	\begin{align}\label{W5}
		& \Big\|\sum_{n=m}^\ell ({\lambda}_n^{\alpha,\beta})^{\gamma}c_n^{\alpha,\beta}(f){\phi}_n^{\alpha,\beta}\Big\|_{L^{p(\cdot)}(0,\pi)}
		 \leq C\Big\|\Big(\sum_{j=\log_2 m}^{2+\log_2 \ell}(2^{j\gamma}|{\Phi}_j^{\alpha,\beta}(f)|)^2\Big)^{1/2}\Big\|_{L^{p(\cdot)}(0,\pi)}.
  	\end{align}
  	Let $f\in F_{\alpha,\beta}^{\gamma,2,p(\cdot)}(0,\pi)$.
	By (\ref{W5}), the series
	$\displaystyle\sum_{n=m}^\ell ({\lambda}_n^{\alpha,\beta})^{\gamma}c_n^{\alpha,\beta}(f){\phi}_n^{\alpha,\beta}$ converges in
	$L^{p(\cdot)}(0,\pi)$. Hence, $f\in H_{\alpha,\beta}^{\gamma,p(\cdot)}(0,\pi)$ and by (\ref{W4}) and Proposition \ref{Prop:M5}, we conclude that
  	$$\|f\|_{H_{\alpha,\beta}^{\gamma,p(\cdot)}(0,\pi)}
		\leq C\|f\|_{F_{\alpha,\beta}^{\gamma,2,p(\cdot)}(0,\pi)}.$$
\end{proof}

\noindent{\bf Acknowledgement}: the authors would like to thank Professor L. Diening for explaining us the extension of the variable exponent  $p\in{\mathcal B}(0,\pi)$ as a function in ${\mathcal B}(\Bbb R)$.

%\newpage
%%%%%%%%%%%%%%%%%%%%%%%%%%%%%%%%%%%%%%%%%%%%%%%%%%%%%%%%%%%%%%%%%%%%%%%%%
%\bibliographystyle{siam}
%\bibliography{references_var_exp}

\end{document}